\newtheorem{thm}{Theorem}[section]
\newtheorem{lem}[thm]{Lemma}
\newtheorem{prop}[thm]{Proposition}
\newtheorem{conj}[thm]{Conjecture}
\newtheorem{cor}[thm]{Corollary}
\theoremstyle{definition}
\newtheorem{defn}[thm]{Definition}
\newtheorem{rem}[thm]{Remark}
\newtheorem{exam}[thm]{Example}
\newcommand{\G}{\mathbf{G}}
\newcommand{\Galois}{\Gamma_{E/F}}
\newcommand{\localsystem}{\mathcal{L}_{\widetilde{\rho}}}
\newcommand{\vol}{\mathrm{vol}}
\newcommand{\loc}{\mathcal{L}}
\newcommand{\M}{\mathcal{M}}
\newcommand{\tr}{\mathrm{tr}}
\newcommand{\Hom}{\mathrm{Hom}}
\newcommand{\g}{\mathfrak{g}}
\newcommand{\overpi}{\widetilde{\pi}}
\newcommand{\K}{\mathcal{K}}
\newcommand{\U}{\mathcal{U}}
\newcommand{\adele}{\mathbb{A}}
\newcommand{\finadele}{\mathbb{A}^{\mathrm{fin}}}
\newcommand{\W}{\mathcal{W}}
\newcommand{\R}{\mathbb{R}}
\newcommand{\C}{\mathbb{C}}
\newcommand{\PGL}{\mathbf{PGL}}
\newcommand{\GL}{\mathbf{GL}}
\newcommand{\MS}{\mathrm{MS}}
\begin{document}

    \title{Equivariant Torsion and Base Change}
    \author{Michael Lipnowski}
\address{Mathematics Department \\
Duke University, Box 90320 \\
Durham, NC 27708-0320, USA}
\email{malipnow@math.duke.edu}
\maketitle

\begin{abstract}
What is the true order of growth of torsion in the cohomology of an arithmetic group?  Let $D$ be a quaternion over an imaginary quadratic field $F.$  Let $E/F$ be a cyclic Galois extension with $\Galois = \langle \sigma \rangle.$  We prove lower bounds for ``the Lefschetz number of $\sigma$ acting on torsion cohomology" of certain Galois-stable arithmetic subgroups of $D_E^\times.$  For these same subgroups, we unconditionally prove a would-be-numerical consequence of the existence of a hypothetical base change map for torsion cohomology.
\end{abstract}

\setcounter{section}{-1}
\tableofcontents

\section{Introduction}

Let $\G / \mathbb{Q}$ be a semisimple group.  Let $K \subset G = \G(\R)$ be a maximal compact subgroup and $X = X_{\G} = G / K$ the symmetric space of maximal compact subgroups of $G.$ Fix an arithmetic subgroup $\Gamma \subset \G(\mathbb{Q}).$  Let $\rho: \G \rightarrow \GL(V)$ be a homomorphism of algebraic groups over $\mathbb{Q}$ and let $M \subset V$ be a $\Gamma$-stable lattice.

Suppose that $\G / \mathbb{Q}$ is anisotropic and that $M_{\R}$ is strongly acyclic \cite[$\S 4$]{BV}.  Let $\Gamma_n \subset \Gamma$ be a sequence of subgroups for which the injectivity radius of $\Gamma_n \backslash X$ approaches infinity.  Bergeron and Venkatesh \cite{BV} prove that if the fundamental rank \footnote{fundamental rank of $G/K =  \mathrm{rank}(G) - \mathrm{rank}(K),$ where $\mathrm{rank}$ denotes the dimension of any maximal torus, not necessarily split} of $X$ equals 1, then
\begin{equation*}
\liminf_n  \frac{\sum_i \log |H^i(\Gamma_n,M)_{\mathrm{tors}}|}{[\Gamma : \Gamma_n]} > 0. 
\end{equation*}     
Little is known about the true order of growth of $\log |H^{*}(\Gamma_n,M)|$ for $X$ of fundamental rank $\neq 1.$   


Let $F$ be an imaginary quadratic field and let $E/F$ a Galois extension cyclic of degree $p.$  Let $D$ be a quaternion algebra over $F$ and $\G$ be the adjoint group of the unit group of $F.$  One goal of this paper is to prove lower bounds for the amount of torsion in the cohomology of locally symmetric spaces for $X_{\PGL_1(D_E)},$ which has fundamental rank $p > 1.$   

Calegari and Venkatesh have proposed an analogue of Langlands functoriality for torsion cohomology \cite[$\S 2$]{CV}.  The hypothetical existence of torsion base change functoriality leads one to predict that torsion cohomology on locally symmetric spaces for $X_{\PGL_1(D)}$ - proven to be abundant in \cite{BV} - can be lifted to torsion cohomology on locally symmetric spaces for $X_{\PGL_1(D_E)}.$  A second goal of this paper is to unconditionally prove a numerical relationship between the cohomology of certain locally symmetric spaces for $X_{\PGL_1(D_E)}$ and ``matching" locally symmetric spaces for $X_{\PGL_1(D)}$ which is consistent with base change for torsion.

\subsection{Notational setup for statement of main results}\label{notn}
\subsubsection{Analytic torsion and Reidemeister torsion}
Let $\loc \rightarrow \M$ be a local system of $\C$ vector spaces equipped with a Hermitian metric.  Let $\sigma,$ of prime order $p,$ act equivariantly by isometries on $\loc \rightarrow \M.$  Let $\Delta_j$ denote the $j$-form Laplace operator for $\loc.$  Let 
\begin{equation*}
\tau_{\sigma}(\M, \loc) := \prod_{j = 0}^{\dim \M} \mathrm{det}'(\sigma^{*} \circ \Delta_{j,\loc})^{j (-1)^j}
\end{equation*}
denote the $\sigma$-equivariant analytic torsion of $\loc \rightarrow \M$ \cite[
$\S 1.1$]{Lip1}. Here, $\mathrm{det}'(A)$ denotes the regularized product of non-zero eigenvalues of $A$ \cite[$\S 2$]{BV}.  The untwisted analytic torsion is defined to be
\begin{equation*}
\tau(\M,\loc) := \tau_1(\M,\loc).
\end{equation*}
We let $RT_{\sigma}(\M, \loc)$ denote the equivariant Reidemeiester torsion of $\loc \rightarrow \M$   \cite[
$\S 1.3$]{Lip1}.  We let $RT(\M,\loc) := RT_1(\M,\loc)$ denote untwisted Reidemeister torsion.

\subsubsection{The locally symmetric space}
Let $D$ be a quaternion algebra over an imaginary quadratic field $F.$ 
  Let $E/F$ be a cyclic Galois extension of prime degree $p$, with Galois group $\Galois$;
  we fix a generator $\sigma$ for $\Galois$. 
  Let $\G$ denote the adjoint group of $\underline{D}^{\times}$, considered as an $F$-algebraic group.  We form the associated  locally symmetric space
 $$\M_\U = \G(E) \backslash \G(\mathbb{A}_E) / \mathcal{K} \; \mathcal{U}.$$
where $\mathcal{U}$ is a compact open Galois-stable subgroup of $\G(\mathbb{A}_E^{\mathrm{fin}})$ and
 $\mathcal{K}$ is a Galois stable maximal compact subgroup of $\G(E_{\mathbb{R}}).$  There is a unique - up to scaling - $\G(E_\R)$ invariant metric on $\G(E_\R) / \K$ which descends to a metric on $\M_\U.$  The group $\Galois$ acts on $\mathcal{M}_{\mathcal{U}}$ isometrically with respect to any such metric.  For a discussion of normalization of this metric, see $\S \ref{homogeneous}.$

\subsubsection{The local system}
Let $N/F$ be a finite extension and $V$ an $N$-vector space.  Let $\widetilde{\rho}:  \mathrm{Res}_{E/F} \G \rtimes \Galois \rightarrow \mathrm{Res}_{N/F} \GL(V)$ be an algebraic representation (over $F$).  We fix an ``integral structure'' within $V$, i.e. an $O_N$-lattice inside $V$; let $\mathcal{U}_0$ be its stabilizer inside $\G(\mathbb{A}_E^{\mathrm{fin}})$.

 The representation $\widetilde{\rho}$ gives rise to a local system of $N$-vector spaces $V_{\widetilde{\rho}}\rightarrow \M_\U$ with an action of $\Galois$.
 If $\mathcal{U} \subset \mathcal{U}_0,$ the integral structure on $V$ yields an integral structure on this local system, i.e. a local system of $O_N$-modules, which we denote
 $$ \localsystem \rightarrow \mathcal{M}_{\mathcal{U}}.$$
This integral structure is discussed in greater detail in $\S \ref{rational}.$

\subsection{Statement of main results}
\label{statementsandmethods}


We briefly describe the main theorems of this paper.  Let
$$M_U = \G(F) \backslash \G(\mathbb{A}_F) / KU, \text{ where } K = \mathcal{K} \cap \mathbf{G}(F_{\mathbb{R}}), U = \mathcal{U} \cap \mathbf{G}(\mathbb{A}_F^{\mathrm{fin}}).$$  
%

\newtheorem*{sample}{Sample Theorem (Comparison of analytic torsion)}

\begin{sample} 
Let $E/F$ be an everywhere unramified Galois extension of odd prime degree.  Suppose that $\mathcal{U}$ is a parahoric level structure (see Definition \ref{parahoric}) and $\rho$ and $\widetilde{\rho}$ match (see $\S \ref{compatibleloc}$).  Then for any complex embedding $\iota: N \hookrightarrow \C,$
\begin{equation*}
\tau_{\sigma}(\mathcal{ M}_{\mathcal{U}},  \loc_{\widetilde{\rho},\iota}) = \tau(M_U, L_{\rho, \iota})^p.
\end{equation*}
A more general statement, which allows any $E/F$ which is everywhere tamely ramified, is proven in $\S \ref{abstractmatching}, \S \ref{tamematching}.$  The relationship in the more general case between $\tau_{\sigma}(\mathcal{ M}_{\mathcal{U}},  \localsystem)$ and $\tau(M_U, L_{\rho})$ has the same flavor but is not as simply stated. 
\end{sample}  

Spectral comparisons such as the sample theorem, in conjunction with Cheeger-M\"{u}ller theorems (see $\S \ref{maintools}$), have consequences for torsion in the cohomology of $\localsystem.$  In order to describe these implications, we use the following notational shorthand:
\begin{itemize}
\item
$\sum {}^{*}$ denotes alternating sum.

\item
$P$ denotes the $p$-cyclotomic polynomial $P(x) = x^{p-1} + x^{p-2} + ... + 1.$

\item
For any $\mathbb{Z}[\sigma]$-module $A$ and any polynomial $h \in \mathbb{Z}[x],$ we let $A^{h(\sigma)} = \{ a \in A: h(\sigma)a = 0 \}.$
\end{itemize}

\newtheorem*{sizeoftorsion}{Sample Theorem (relationship between sizes of torsion subgroups)}

\begin{sizeoftorsion}
Let $E/F$ be an everywhere tamely ramified Galois extension of odd prime degree with $\Galois = \langle \sigma \rangle.$  Let the places where $E/F$ is ramified and the places where $D$ is ramified be disjoint.  Suppose that

\begin{itemize}
\item
$\rho, \widetilde{\rho}$ are matching representations of the sort described in $\S \ref{weirdloc}.$

\item
The level structure $\U$  is \emph{tamely parahoric} at each unramified place of $E/F$ (see Definition \ref{tameparahoric}).
\end{itemize}

Then there is an explicit finite collection of compact open subgroups $U \subset \G(\finadele_F)$ and explicit constants $c_U$ such that
$$\sum {}^{*} \log |H^i(\M_\U, \localsystem)_{\mathrm{tors}}^{\sigma - 1}| - \frac{1}{p-1} \log |H^i(\M_\U, L_{\rho})^{P(\sigma)}| = \sum {}^{*} \log |H^i(M_U, L_{\rho})_{\mathrm{tors}}|  + n \log p.$$
for some integer $n.$  Furthermore, $n$ can be bounded linearly by $\dim_{\mathbb{F}_p} H^i(M_U, L_{\rho, \mathbb{F}_p}).$
\end{sizeoftorsion}

For a more precise statement, see Theorem \ref{cohomologycomparison}.  An appropriate generalization of the sample theorem also has consequences for growth of torsion in the cohomology of the spaces $\M_\U.$

%
%

\newtheorem*{lotsoftorsion}{Corollary (Growth of torsion for fundamental rank $>1$)}

\begin{lotsoftorsion} 
Let $E/F$ be everywhere tamely ramified with $[E:F]$ odd.  Let the places where $E/F$ is ramified and the places where $D$ is ramified be disjoint. \medskip
 
Let $\mathcal{U}_N \subset \mathcal{U}_0$ denote a sequence of compact open subgroups of $\G(\mathbb{A}_E^{\mathrm{fin}})$ such that

\begin{itemize}
\item
The injectivity radius of $\mathcal{M}_{\mathcal{U}_N}$ approaches $\infty.$

\item
The level structures $\mathcal{U}_N$  are \emph{tamely parahoric} at each unramified place of $E/F$ (see Definition \ref{tameparahoric}).

\item
The local systems $L_{\rho} \rightarrow M_{U_N}$ form a strongly acyclic family (see $\S \ref{stronglyacyclic}$), where $\rho$ and $\widetilde{\rho}$ are matching representations (see $\S \ref{compatibleloc}$).

\item
$L_{\rho}$ has ``not too much mod $p$ cohomology".  More precisely,  

$$\frac{\log  | H^{*}(M_{U_N}, L_{\rho,\mathbb{F}_p })|} { \vol( M_{U_N})} \rightarrow 0.  $$

\end{itemize} 

Then it follows that

$$ \limsup_N \frac{\log  | H^{*}(\mathcal{M}_{\mathcal{U}_N}, \localsystem)| }{ \vol( \mathcal{M}_{\mathcal{U}_N})^{ \frac{1}{p} }  } > 0.$$

\end{lotsoftorsion}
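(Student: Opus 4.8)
The plan is to combine the ``Sample Theorem (relationship between sizes of torsion subgroups)'' — i.e.\ the precise Theorem~\ref{cohomologycomparison} — with the lower bound for torsion growth on the base spaces $M_{U}$ supplied by \cite{BV}, and to show that the error terms are negligible on the relevant scale. First I would record that, since the fundamental rank of $X_{\PGL_1(D)}$ equals $1$ and the family $L_\rho \to M_{U_N}$ is strongly acyclic, the theorem of Bergeron--Venkatesh gives
\begin{equation*}
\sum {}^{*} \log |H^i(M_{U_N}, L_\rho)_{\mathrm{tors}}| \;\gg\; \vol(M_{U_N}),
\end{equation*}
with an implied constant independent of $N$; moreover strong acyclicity forces the characteristic-zero cohomology $H^i(M_{U_N},L_\rho \otimes \C)$ to vanish in all degrees, so the alternating sum of $\log$ of torsion is, up to sign conventions, comparable to $\log|H^*(M_{U_N},L_\rho)|$. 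The point of the everywhere-tame, disjoint-ramification, tamely parahoric hypotheses is precisely to make Theorem~\ref{cohomologycomparison} applicable, so that for each of the finitely many auxiliary levels $U$ appearing there one gets an identity of the displayed shape.

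Next I would feed this into the size-of-torsion comparison. Summing the identity of Theorem~\ref{cohomologycomparison} over the finite collection of levels $U$ (with the constants $c_U$), one obtains
\begin{equation*}
\sum {}^{*} \log |H^i(\M_{\U_N}, \localsystem)_{\mathrm{tors}}^{\sigma-1}| \;=\; \sum_U c_U \sum {}^{*} \log |H^i(M_{U_N}, L_\rho)_{\mathrm{tors}}| \;+\; \tfrac{1}{p-1}\sum {}^{*}\log|H^i(\M_{\U_N},L_\rho)^{P(\sigma)}| \;+\; n_N \log p,
\end{equation*}
where the first term on the right is $\gg \vol(M_{U_N})$ by the previous paragraph, the middle term is controlled since $H^*(\M_{\U_N}, L_\rho\otimes\C) = 0$ (strong acyclicity is inherited by $\M_\U$, as $L_\rho$ and $L_{\widetilde\rho}$ match), and the last term satisfies $|n_N| \ll \dim_{\mathbb{F}_p} H^*(M_{U_N}, L_{\rho,\mathbb{F}_p}) = o(\vol(M_{U_N}))$ by the fourth hypothesis. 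Hence the left-hand side — hence \emph{a fortiori} $\log|H^*(\M_{\U_N},\localsystem)|$, which dominates $\log$ of the order of the $(\sigma-1)$-torsion subgroup — is $\gg \vol(M_{U_N})$.

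The final step is the volume bookkeeping: one must relate $\vol(M_{U_N})$ to $\vol(\M_{\U_N})$. Since $\M_{\U_N}$ is (up to the connected-component and commensurability subtleties handled in the body of the paper) a degree-roughly-$[E:F]^{\dim}$-type cover situation over $M_{U_N}$, the symmetric space $X_{\PGL_1(D_E)}$ being a product of $[E:F]$ copies of $X_{\PGL_1(D)}$-type factors, one expects $\vol(M_{U_N}) \gg \vol(\M_{\U_N})^{1/p}$ for the families in question — this is exactly the exponent $\tfrac1p$ appearing in the statement. Combining with the previous paragraph yields
\begin{equation*}
\log|H^*(\M_{\U_N},\localsystem)| \;\gg\; \vol(M_{U_N}) \;\gg\; \vol(\M_{\U_N})^{1/p},
\end{equation*}
so the $\limsup$ in question is strictly positive. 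I expect the main obstacle to be not any single estimate but the careful matching of normalizations: ensuring that the metric normalization of $\S\ref{homogeneous}$, the volume comparison between the $E$- and $F$-locally symmetric spaces, the finite collection of levels $U$ and constants $c_U$ from Theorem~\ref{cohomologycomparison}, and the strong-acyclicity transfer from $M_{U_N}$ to $\M_{\U_N}$ all fit together so that the error terms ($n_N\log p$ and the $P(\sigma)$-cohomology term) are genuinely $o(\vol(\M_{\U_N})^{1/p})$; the fourth hypothesis is tailored to kill the most dangerous of these, but verifying the others requires care.
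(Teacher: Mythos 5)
Your route --- converting to a cohomological identity via Theorem \ref{cohomologycomparison} and then importing a torsion lower bound on the base spaces from \cite{BV} --- founders at the step where you assert that $\sum_U c_U \sum {}^{*}\log|H^i(M_{U_N},L_\rho)_{\mathrm{tors}}| \gg \vol(M_{U_N})$ ``by the previous paragraph.'' The constants $c_U$ are products of the coefficients of the local test measures $m_{\mathrm{add},v} = \tfrac12\mathbf{1}_{K_v}\,dk_v + \tfrac12\mathbf{1}_{K'_v} - \tfrac12\mathbf{1}_{I_v}\,di_v + \tfrac12\mathbf{1}_{C_0,v}\,dc_0$ from Theorem \ref{finalmatching}, and some of these products are \emph{negative}. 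A lower bound for each individual level therefore gives no lower bound for the signed combination: the contributions from the different auxiliary levels could cancel. To rule this out one needs an exact asymptotic for each auxiliary level together with an explicit, nonvanishing evaluation of the signed combination of volumes $\sum_U c_U\,\vol(M_{U,N})$. This is precisely the content of the paper's Theorem \ref{refinedgrowthatorsion}: it works at the level of analytic torsion, applies the Bergeron--Venkatesh limit multiplicity formula \cite[Theorem 4.5]{BV} (which gives a limit, not merely a lower bound) to each $M_{i_1,\dots,i_n}$, and computes the resulting signed combination to be $p\cdot(1/2)^n\, c_{a,b}\prod_i\bigl(2|\mathbf{B}(k_{F_v})|-1+\tfrac{1}{q_v}\bigr)\neq 0$. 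Only afterwards does the paper convert to cohomology via the equivariant Cheeger--M\"{u}ller theorem (Theorem \ref{refinedgrowthcohomology}). Your proposal never confronts this non-cancellation, which is the actual heart of the growth theorem.

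A second, smaller defect: you dispose of the term $\tfrac{1}{p-1}\sum {}^{*}\log|H^i(\M_{\U_N},\loc)^{P(\sigma)}|$ by appealing to vanishing of the complexified cohomology. Rational acyclicity only says the integral cohomology is all torsion; it gives no control whatsoever on the \emph{size} of the $P(\sigma)$-torsion, which is one of the quantities being measured. In the paper this term is never shown to be small: it remains in the conclusion of Theorem \ref{refinedgrowthcohomology}, and Corollary \ref{unrefinedgrowth} proceeds by a dichotomy --- if the offending piece of torsion (there, the $p$-primary part; here, one could argue similarly for the $P(\sigma)$ part) is already of size $\gg \vol(\M_{\U_N})^{1/p}$, the desired conclusion holds trivially, and otherwise the refined limiting statement applies. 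Your argument can be repaired along those lines, and your volume bookkeeping $\vol(M_{U_N})\asymp\vol(\M_{\U_N})^{1/p}$ agrees with the paper, but as written both the treatment of the $P(\sigma)$ term and, more seriously, the signed sum over auxiliary levels are genuine gaps.
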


In fact, we prove a more refined result (see $\S \ref{cohomologygrowth}$) which is analogous to the statement that for many $\ell$ dividing $|H^{*}(\mathcal{M}_{\mathcal{U}_N}, \localsystem)|,$ ``the Lefschetz numbers of $\sigma$ acting on the cohomology of $\mathcal{L}_{\widetilde{\rho}, \mathbb{F}_{\ell}}$ are big."

%
%
%

\begin{rem} 
The fourth assumption of the corollary, namely $L_{\rho}$ having ``not too much mod $p$ cohomology," cannot be removed at the present time.  However, it is expected always to be true.  A conjecture to this effect, due to Calegari and Emerton in \cite{CE}, is discussed in Remark \ref{calegariemerton}.  
\end{rem}

\subsection{Main tools}
\label{maintools}
Three main inputs are used to prove the theorems stated in $\S \ref{statementsandmethods}$:

\begin{itemize}
\item[(a)]
Cheeger-M\"{u}ller theorems.\smallskip

Let $\loc \rightarrow \M$ be a local system of metrized $\C$-vector spaces acted on equivariantly by an isometry $\sigma$ of finite order $p.$ A \emph{Cheeger-M\"{u}ller Theorem} is an identity 
\begin{equation*}
\tau_{\sigma}(\M, \loc) = RT_{\sigma}(\M, \loc).
\end{equation*} 

valid for some class of metrized local systems $\loc$ and some class of equviariant isometries $\sigma.$  When $\sigma = 1$ and $\loc$ is unimodular, this was proved by M\"{u}ller \cite{Mu2}.  

A general version of this theorem for $\sigma \neq 1$ is proven by Bismut and Zhang \cite{BZ2}.  This general version expresses the difference between $\log RT_{\sigma}(\M,\loc)$ and $\log \tau_{\sigma}(\M, \loc)$ in terms of auxillary differential geometric data on a germ of the fixed point set of $\sigma.$  The author proves in \cite{Lip1} that this difference equals zero in the cases to be studied in this paper.

Cheeger-M\"{u}ller theorems provide a bridge between the analytic expression $\tau_{\sigma}(\M,\loc)$ and the quantity $RT_{\sigma}(\M,\loc)$ which can concretely be related to the $\sigma$-module $H^{*}(\M,\loc)_{\mathrm{tors}}$ \cite[
Corollary 3.8]{Lip1}.  \medskip

\item[(b)]
Trace formula comparison.\smallskip

Using Cheeger-M\"{u}ller theorems, torsion in cohomology of arithmetic groups related by base change can be compared by instead comparing analytic torsions.  In the case of compact quotient, the logarithm of equivariant analytic torsion equals the spectral side of the twisted Arthur-Selberg trace formula for an appropriate test function (see $\S \ref{reptorsion}$).  The logarithm of analytic torsion equals the spectral side of the untwisted trace formula for a matching test function.  

Comparing these spectral quantities uses the methods of \cite{Lan} together with some local representation theory for $\PGL_2.$  In particular, we need to prove a ``fundamental lemma for the spherical unit" for tamely ramified base change of $\PGL_2.$  \medskip

\item[(c)]
The results of Bergeron and Venkatesh \cite{BV} on growth of untwisted analytic torsion for sequences of locally symmetric spaces with universal cover of fundamental rank 1.
\end{itemize}

Combining (a) with (b) proves an identity relating sizes of torsion cohomology for arithmetic groups related by base change (see Theorem \ref{cohomologycomparison}).  The resulting identity is consistent with implications of torsion base change functoriality (see $\S \ref{torsionfunctoriality}$).  

Combining (a) with (c) proves growth of ``Lefschetz numbers for torsion."  See Theorem \ref{refinedgrowthcohomology} for a more precise statement.

\subsection{Acknowledgements}

This paper is an outgrowth of the author's PhD thesis.  It owes its existence to the inspirational work of Bergeron-Venkatesh \cite{BV} and Calegari-Venkatesh \cite{CV}.  

The author thanks Jayce Getz, Les Saper, and Mark Stern for their helpful comments on drafts of this paper. 
 
The author would like to thank Nicolas Bergeron for many stimulating discussions on torsion growth and twisted endoscopy. 

Last but not least, the author would like to express his deep gratitude to his advisor, Akshay Venkatesh, for sharing so many of his ideas and for providing constant encouragement and support during the preparation of this work.

\subsection{Outline}
\label{outline}

\begin{itemize}
\item
In $\S \ref{torsionfunctoriality},$ we discuss base change functoriality over $\mathbb{Z}.$  Base change functoriality over $\mathbb{Z}$ predicts one of the main results of this paper, a relationship between the sizes of torsion subgroups on locally symmetric spaces related by base change.

\item
In $\S \ref{homogeneous},$ we discuss representation theoretic formulations of equivariant vector bundles, bundle-valued differential forms, local systems of vector spaces, and equivariant local sytems.  We see in $\S \ref{rational}$ how, for finite extensions $N/F,$ algebraic homomorphisms $\G \rightarrow R_{N/F} \GL(V)$ over $F$ give rise to local systems of $O_N$ modules over $M_U$ for appropriate compact open subgroups $U \subset \G(\finadele_F).$

\item
In $\S \ref{reptorsion},$ we will express the $\sigma$-twisted analytic torsion of $\localsystem \rightarrow \mathcal{M}_{\mathcal{U}}$ and the untwisted analytic torsion of $L_{\rho} \rightarrow M_U$  in purely representation theoretic terms.

\item
In $\S \ref{quat},$ we will use Langlands' representation theoretic statement of base change to prove an abstract matching theorem \ref{abstractmatching}, which will ultimately imply identities of the flavor 
$$\tau_{\sigma}(\M_\U, \localsystem) = \tau(M_U, L_{\rho})^p$$
of the aforementioned sample theorem.  In order to apply this matching theorem, we need to find instances of matching test functions, which will be the objective of $\S \ref{unrammatching}, \ref{tamematching}.$

\item
In $\S \ref{unrammatching},$ we will describe some circumstances under which the desired matching test functions can be found.  This matching only applies at places $v$ where $E_v/F_v$ is unramified.  Here is where we finally define and discuss parahoric level structure.

\item
In $\S \ref{tamematching},$ we prove a matching theorem at places $v$ where $E_v/F_v$ is tamely ramified.  In the final subsection $\S \ref{redefparahoric},$ we define tamely parahoric level structures, those level structures which occur in the matching theorem \ref{finalmatching} and the numerical cohomology comparison theorem \ref{cohomologycomparison}.  

\item
In $\S \ref{funct},$ we prove the numerical cohomology comparison theorem \ref{cohomologycomparison} for the local systems introduced in $\S \ref{weirdloc}.$ 

\item
In $\S \ref{growth},$ we use the main comparison theorem \ref{abstracttorsion} for analytic torsion together with \cite[Theorem 4.5]{BV} to prove that, for appropriate equivariant local systems $\loc$ and level structures $\U$ (see \ref{parahoric}), the twisted analytic torsion $\log \tau_{\sigma}(\M_\U, \loc) \sim \vol(\M_\U)^{\frac{1}{p}}.$   Combined with the results of \cite[
$\S 1$-$\S 5$]{Lip1} - which relate equivariant Reidemeister torsion to cohomology, a statement on asymptotic growth of cohomology is proven.  The results of \cite{BV} prove asymptotic growth of Reidemeister torsion, which one might loosely think of as an ``Euler characteristic for torsion in cohomology".  In the same vein, the results of $\S \ref{growth}$ prove ``asymptotic growth of Lefschetz numbers for torsion in cohomology." 
\end{itemize}

\subsection{Notation used throughout}
\label{commonnotation2}

This section compiles a list of frequently used notation.  The descriptions given are consistent with the most common usage of the corresponding symbols.  The reader should be warned, however, that within a given chapter or section, the below symbols might carry a slightly different meaning; such local changes of notation will be made clear as necessary.   

\subsubsection{Algebraic groups and representation theory notation}
\begin{itemize}
\item
$E/F$ denotes a cyclic Galois extension of number fields of odd prime degree $p$ with Galois group $\Galois = \langle \sigma \rangle.$  The rings $O_E, O_F$ denote the ring of integers of $E$ and $F$ respectively.

\item
For a field extension $N/F, \iota:N \hookrightarrow \C$ denotes a complex embedding of $N$ and the induced complex embedding of $F.$

\item
$D$ denotes a quaternion algebra over $F.$  

\item
$\G$ denotes an algebraic group over a number field $F.$  Unless otherwise specified, $\G$ denotes the adjoint group of $\underline{D}^{\times}.$

\item
$\U$ denotes a compact open Galois stable subgroup of $\G(\finadele_E)$ and $\K \subset \G(E_\R)$ denotes a Galois stable maximal compact subgroup  

\item
$U$ denotes a compact open subgroup of $\G(\finadele_F)$ and $K \subset \G(F_\R)$ denotes a maximal compact subgroup.

\item
$\overpi$ denotes a representation of $\G(\adele_E) \rtimes \Galois$ and $\pi$ denotes a representation of $\G(\adele_F).$

\item
$M_U:= \G(F) \backslash \G(\adele_F) / KU$ and $\M_\U := \G(E) \backslash \G(\adele_E) / \K \U.$

\item
$\widetilde{\rho}$ is a finite dimensional representation $R_{E/F} \G$ and $\rho$ is a finite dimensional representation of $\G.$

\item
$V_{\rho}$ denotes the local system of $O_N$-modules associated to a rational representation $\rho$ in the manner of $\S \ref{rational}.$

\item
$\overpi$ denotes a representation of $\G(\adele_E) \rtimes \Galois$ and $\pi$ denotes a representation of $\G(\adele_F).$

\item
$V'_{\rho} \rightarrow M_U$ denotes the local system of $F$-vector spaces associated to a representation $\G \rightarrow \GL(W)$ defined over $F.$  We let $V'_{\rho, O_F}$ denote the associated local system of $O_F$-modules, defined for appropriately chosen $U.$  See $\S \ref{rational}.$

\item
$V_{\rho} \rightarrow M_U$ denotes the local system of real (complex) vector spaces associated to a real (complex) representation $\rho: \G(F_\R) \rightarrow GL(W).$  See $\S \ref{greps}.$

\item
$r$ denotes the regular representation of $\G(\adele_F)$ on $L^2(\G(F) \backslash \G(\adele_F))$ and $\mathcal{R}$ denotes the regular representation of $\G(\adele_E) \rtimes \Galois$ on $L^2(\G(E) \backslash \G(\adele_E)).$

\item
For any representation $(\pi,W_{\pi})$ of $\G(\adele_F)$ and any compactly supported smooth measure $f dg$ on $\G(\adele_F),$ we let $\pi(f dg)$ denote the convolution operator 
$$\int_{\G(\adele_F)} f(g) \pi(g) dg \circlearrowleft W_\pi.$$  
Similarly for representations $\overpi$ of $\G(\adele_E).$

\item
We denote
\begin{eqnarray*}
&{}& \W := L^2(\G(E) \backslash \G(\adele_E)), \W_{\widetilde{\rho}} := L^2(\G(E) \backslash \G(\adele_E)) \otimes \widetilde{\rho} \\
&{}& W := L^2(\G(F) \backslash \G(\adele_F)), W_{\rho} := L^2(\G(F) \backslash \G(\adele_F)) \otimes \rho
\end{eqnarray*} 
for finite dimensional complex representations $\widetilde{\rho}$ and $\rho$ of $\G(F_\R), \G(E_\R)$ respectively.

\item
For a representation $\pi$ of a group $H$ and a representation $V$ of $H,$ we let $V[\pi]$ denote the $\pi$-isotypic subspace of $H,$ i.e. the image of the canonical evaluation map $\Hom_H(\pi,V) \otimes \pi \rightarrow V.$

\item
For a semisimple group $\G / \R$ and a maximal compact subgroup $K \subset \G(\R),$ we let $\theta$ denote the Cartan involution of $\G$ associated to $K;$ the fixed point set of $\theta$ acting on $\G(\R)$ equals $K.$ 

\item
If $F$ is a local field with ring of integers $O_F$ and maximal ideal $\mathfrak{m}_F,$ let $k_F$ denote the residue field $O_F / \mathfrak{m}_F.$

\item
For any finite dimensional representation $A$ of $\langle \sigma \rangle,$ we let $\langle A \rangle$ denote $\mathrm{tr}(\sigma | A).$  
\end{itemize}

\subsubsection{Reidemeister torsion notation}
\begin{itemize}
\item
$L \rightarrow M$ denotes a local system of projective $O_F,F, \mathbb{Q}, \mathbb{Z}, \R,$ or $\C$-modules, depending on the context.

\item
$\loc \rightarrow \M$ denotes a local system equivariant for the action of a finite group $\Gamma,$ usually $\Gamma = \langle \sigma \rangle$ with $\sigma^p = 1.$


\item
$\MS(X,L)$ denotes the Morse-Smale complex associated with a vector field $X$ on $M$ which is weakly gradient-like with respect to a fixed Morse function $f$ and which satisfies Morse-Smale transversality.  


\item
$RT(X, L)$ denotes the Reidemeister torsion of the Morse-Smale complex $\MS(X,L)$ for a vector field $X,$ satisfying Morse-Smale transversality, and a local system $L \rightarrow M,$ provided the Morse function $f$ and the implicit volume forms are understood.  $RT_{\sigma}(X,\loc)$ denotes the twisted Reidemesiter torsion of the Morse-Smale complex whenever $\loc \rightarrow \M$ is a $\langle \sigma \rangle$ equivariant local system.  We often suppress the $X$ and denote this by $RT(M,L).$  

\item
For an $R$-module $A$ acted on $R$-linearly by $\langle \sigma \rangle,$ we let $A^{\sigma - 1} := \{ a \in A: (\sigma - 1) \cdot a = 0 \}$ and $A^{P(\sigma)} = \{ a \in A: P(\sigma) \cdot a = 0 \}$ where $P(\sigma)$ denotes the $p$-cyclotomic polynomial $P(x) = x^{p-1} + x^{p-2} + ... + 1.$  Sometimes, we denote these by $A[\sigma - 1]$ and $A[P(\sigma)]$ as well.

\item
For an $R$-module $A$ acted on $R$-linearly by $\langle \sigma \rangle,$ we define $A' := A / (A[\sigma - 1] \oplus A[P(\sigma)]).$  Similarly, if $A^{\bullet}$ is a complex of $R$-modules acted on $R$-linearly by $\langle \sigma \rangle,$ we define $A'^{\bullet} :=  A^{\bullet} / (A^{\bullet}[\sigma - 1] \oplus A^{\bullet}[P(\sigma)]).$

\item
$\sum {}^{*}$ and $\prod {}^{*}$ respectively denote alternating sum and alternating product.  

\end{itemize}

\section{A priori predictions via torsion base change functoriality}
\label{torsionfunctoriality}

Calegari and Venkatesh \cite[$\S 2$]{CV} have conjectured an analogue of Langlands functoriality for mod $p$ and torsion cohomology of arithmetic locally symmetric spaces.  
In this section, we explain how one of our main results, Theorem $\ref{cohomologycomparison},$ is roughly predicted by their conjecture applied to base change.

\subsection{Base change functoriality over $\mathbb{Z}$}
\label{torsionbasechange}
For a more general discussion of functoriality over $\mathbb{Z},$ we refer the reader to \cite[$\S 2$]{CV}. \medskip

Let $F$ be any number field and $E/F$ a cyclic Galois extension of odd prime degree $p.$  Let $\G_1 / F$ be any group and $\G_2 = R_{E/F} \G_1.$  In accordance with functoriality over $\mathbb{Z},$ the diagonal map of $L$-groups
\begin{eqnarray*}
{}^L \G_1 &\xrightarrow{\phi}& {}^L \G_2 \\
(g,\sigma) &\mapsto& (g,...,g) \times \sigma
\end{eqnarray*}
is expected to give rise to a Hecke equivariant map $\phi_{*}$ on cohomology, torsion or otherwise.  For certain groups, such as $\G_1$ the unit group of a semisimple algebra over $F,$ this correspondence is known for characteristic zero cohomology.  

One main purpose of this paper is to unconditionally prove certain would-be-numerical consequences of the existence of the hypothetical map $\phi_{*}.$  The analogue of this program was carried out for Jacquet-Langlands functoriality for $\PGL_2$ in \cite{CV}.

\subsection{Conjectures on torsion base change}

This highly speculative section discusses implications of the existence of a base change map $\phi_{*},$ as above.  The discussion uses the language of Langlands' theory of base change, to be reviewed in $\S \ref{prelimbc}.$  \medskip

Let $D$ be a quaternion algebra over a number field $F.$  Let $\G$ denote the adjoint group of its group of units.  Let $E/F$ be a cyclic Galois extension with $\Galois = \langle \sigma \rangle.$  Let $\U \subset \G(\finadele_E)$ be Galois invariant and $U \subset \G(\finadele_F)$ be compact open subgroups.  Let $K \subset \G(F_\R)$ be a maximal compact subgroup and $\K \subset \G(E_\R)$ be a Galois invariant maximal compact subgroup.  Let $\M_\U = \G(E) \backslash \G(\adele_E) / \K \U$ and $M_U = \G(F) \backslash \G(\adele_F) / KU.$  For compact open subgroups $J \subset \G(\finadele_F)$ and $\mathcal{J} \subset \G(\finadele_E),$ let  $W^J = L^2(\G(F) \backslash \G(\adele_F) / J)$ and $\W^\mathcal{J} = L^2(\G(E) \backslash \G(\adele_E) / \mathcal{J}).$  

\begin{defn}
We say that $\mathbf{1}_{\U} d\tilde{u}$ and $\sum_U c_U \mathbf{1}_U du$ (finite sum)  are \emph{matching level structures} if 
$$\tr \{ \sigma | \W[\overpi]^\U \} = \sum_U c_U \dim W[\pi]^U$$
for every pair of representations $\overpi$ of $\G(\adele_E) \rtimes \Galois$ and $\pi$ of $\G(\adele_F)$ which match by base change (see $\S \ref{prelimbc}$). If $E'/F$ is a second cyclic Galois extension and $\U' \subset \G(\adele_{E'})$ and $\U \subset \G(\adele_E)$ both match a common level structure, we say that $\U, \U'$ are \emph{related}. 
\end{defn}

\begin{lem}
Matching level structures satisfy the following identity of traces in cohomology
\begin{equation}
\tr \{ \sigma | H^{*}(\M_{\U}, \loc_{\mathbb{C}}) \} = \sum_U c_U \dim H^{*}(M_U, L_{\mathbb{C}}).
\end{equation}
for local systems $\loc$ and $L$ associated to compatible representations $\widetilde{\rho}$ of $R_{E/F} \G \rtimes \Galois$ and $\rho$ of $\G$ (see $\S \ref{compatibleloc}$ for a discussion of matching representations and matching local systems).
\end{lem}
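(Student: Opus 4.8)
The plan is to deduce the cohomological identity from the defining spectral identity of matching level structures by means of a Matsushima-type formula expressing the (complexified) cohomology of $\M_{\U}$ and of $M_U$ in terms of the representations occurring in $\W$ and $W$ respectively. First I would recall that, since $\G$ is the adjoint group of a quaternion algebra over an imaginary quadratic field and is anisotropic (or, more precisely, since the relevant locally symmetric spaces are compact because $D$ is a division algebra, or carry a Borel--Serre-type compactification in the split case), the $L^2$-cohomology of $\localsystem_\C \to \M_\U$ decomposes discretely. Concretely, writing $\localsystem_\C$ for the local system attached to $\widetilde{\rho}$, one has a $\Galois$-equivariant isomorphism
\begin{equation*}
H^{*}(\M_\U, \localsystem_\C) \cong \bigoplus_{\overpi} \overpi^{\U} \otimes H^{*}(\g_E, \K; \overpi_\infty \otimes \widetilde{\rho}_\infty),
\end{equation*}
where the sum runs over irreducible automorphic representations $\overpi$ of $\G(\adele_E)$ occurring in $\W$, equipped with the $\Galois$-action coming from $\sigma$, and similarly
\begin{equation*}
H^{*}(M_U, L_\C) \cong \bigoplus_{\pi} \pi^{U} \otimes H^{*}(\g_F, K; \pi_\infty \otimes \rho_\infty).
\end{equation*}

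Next I would take traces of $\sigma$. On the $\M_\U$ side, $\sigma$ acts on $\W$, hence permutes the automorphic representations; the $\sigma$-invariant contributions to the trace come precisely from the $\sigma$-stable $\overpi$, on which $\sigma$ acts on $\W[\overpi]^\U$ and separately (by functoriality of the local system attached to $\widetilde{\rho}$, which is a representation of $R_{E/F}\G \rtimes \Galois$) on the archimedean $(\g,\K)$-cohomology factor. The key compatibility I need is that, for $\overpi$ the base change of $\pi$, the $\sigma$-action on $H^{*}(\g_E,\K;\overpi_\infty \otimes \widetilde{\rho}_\infty)$ is trivial (or, more honestly, induces multiplication by a scalar which, after the normalizations of $\S\ref{compatibleloc}$ on matching representations, is $1$), and that this archimedean factor is moreover isomorphic, as a graded vector space, to $H^{*}(\g_F,K;\pi_\infty \otimes \rho_\infty)$; this is the archimedean base-change/Delorme-type computation that underlies the definition of compatible local systems. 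Granting this, taking the trace of $\sigma$ term by term gives
\begin{equation*}
\tr\{\sigma \mid H^{*}(\M_\U,\localsystem_\C)\} = \sum_{\overpi = \mathrm{BC}(\pi)} \tr\{\sigma \mid \W[\overpi]^\U\} \cdot \dim H^{*}(\g_F,K;\pi_\infty \otimes \rho_\infty),
\end{equation*}
since a non-$\sigma$-stable $\overpi$ contributes $0$ to the trace. Applying the defining identity $\tr\{\sigma \mid \W[\overpi]^\U\} = \sum_U c_U \dim W[\pi]^U$ for matching level structures, and then resumming over $\pi$ using the Matsushima formula for each $M_U$, yields exactly $\sum_U c_U \dim H^{*}(M_U, L_\C)$, as desired. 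A small bookkeeping point: one must make sure that representations $\overpi$ which are \emph{not} base changes contribute nothing — but a $\sigma$-stable cuspidal $\overpi$ is automatically a base change by Langlands (to be reviewed in $\S\ref{prelimbc}$), and non-$\sigma$-stable ones are killed by the trace, so only genuine base changes survive; the residual/one-dimensional spectrum is handled identically since base change for characters is explicit.

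The main obstacle is the archimedean input: precisely identifying the $\sigma$-equivariant structure on $H^{*}(\g_E,\K;\overpi_\infty \otimes \widetilde{\rho}_\infty)$ for $\overpi = \mathrm{BC}(\pi)$ and matching it with $H^{*}(\g_F,K;\pi_\infty \otimes \rho_\infty)$. This is where the hypothesis that $\widetilde{\rho}$ and $\rho$ are \emph{compatible} (in the sense of $\S\ref{compatibleloc}$) does the real work — it is exactly engineered so that the coefficient module is compatible with base change at infinity — and I expect the proof to reduce to citing that compatibility together with the standard fact that $\sigma$ acts trivially on relative Lie algebra cohomology of a base-changed cohomological representation. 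Everything else (discreteness of the spectrum, Matsushima's formula, the trace bookkeeping) is formal once the foundational material of $\S\ref{homogeneous}$, $\S\ref{rational}$, and $\S\ref{prelimbc}$ is in place.
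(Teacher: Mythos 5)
Your overall architecture is the same as the paper's: decompose both sides by Matsushima's formula, use the defining identity of matching level structures to handle the finite part, and reduce to an archimedean statement about $(\g,K)$-cohomology; non-$\sigma$-stable $\overpi$ indeed drop out of the trace. The gap is in the archimedean step, which you yourself flag as the main obstacle and then resolve incorrectly. Since $F$ is imaginary quadratic, its unique (complex) archimedean place splits completely in $E$, so $\G(E_\R) \cong \G(F_\R)^p$, and for $\overpi = \mathrm{BC}(\pi)$ one has $\overpi_\infty \cong \pi_\infty^{\boxtimes p}$ and $\widetilde{\rho}_\infty \cong \rho_\infty^{\boxtimes p}$ with $\sigma$ acting by cyclic permutation of the factors. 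Consequently, by the K\"unneth formula, $H^{*}(\g_E,\K;\overpi_\infty \otimes \widetilde{\rho}_\infty)$ is the $p$-fold graded tensor power of $H^{*}(\g_F,K;\pi_\infty \otimes \rho_\infty)$, with $\sigma$ acting by the cyclic shift. It is therefore \emph{not} isomorphic as a graded vector space to $H^{*}(\g_F,K;\pi_\infty \otimes \rho_\infty)$ (its dimension is $d^p$, not $d$), and $\sigma$ does not act trivially or by a scalar on it; the ``standard fact that $\sigma$ acts trivially on relative Lie algebra cohomology of a base-changed cohomological representation'' is not what happens in this split-at-infinity situation.

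What saves the conclusion — and what the paper actually uses — is the elementary identity $\tr\{\text{cyclic permutation} \mid V^{\otimes p}\} = \dim V$, applied to $V = H^{*}(\g_F,K;\pi_\infty \otimes \rho_\infty)$ (with no sign issues since $p$ is odd). This yields precisely $\tr\{\sigma \mid H^{*}(\overpi_\infty \otimes \widetilde{\rho})\} = \dim H^{*}(\pi_\infty \otimes \rho)$, so your final displayed formula, and hence the lemma, is correct; but the justification you give for that formula would not survive scrutiny and must be replaced by the K\"unneth-plus-trace-of-a-cyclic-shift argument. Here the role of the compatibility of $\widetilde{\rho}$ with $\rho$ is simply that $\widetilde{\rho}_\infty$ is the $p$-fold outer tensor power of $\rho_\infty$ permuted by $\sigma$, not any finer normalization of scalars.
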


\begin{proof}
We can decompose $H^{*}(M_U, L) = \bigoplus \dim W[\pi]^U \cdot H^{*}(\pi_{\infty} \otimes \rho)$ in accordance with Matsushima's formula \cite[VII.5.2]{BW}, where $H^{*}(\pi_{\infty} \otimes \rho)$ denotes $(\g,K)$-cohomology; this is a representation theoretic incarnation of Hodge theory.  Because the level structures match, we are reduced to proving that 
$$\tr\{ \sigma | H^{*}(\overpi_{\infty} \otimes \widetilde{\rho}) \} = \dim H^{*}(\pi_{\infty} \otimes \rho)$$
for pairs of representations $\overpi$ of $\G(\adele_E) \rtimes \Galois$ and $\pi$ of $\G(\adele_F)$ which match by local base change.  In this particular situation, $\G(E_\R)$ is isomorphic to $\G(F_\R)^p$ and that $\overpi_{\infty} \cong \pi_{\infty}^{\boxtimes p},$ where $\sigma$ acts by cyclic permutation.  By the K\"{u}nneth formula for $(\g,K)$-cohomology, $H^{*}(\overpi_{\infty}) \cong H^{*}(\pi_{\infty})^{\otimes p}$ (graded tensor product), where $\sigma$ again acts by cyclic permutation.  The result then follows by the elementary fact that for any finite dimensional $V,$ 
$$\tr \{ \text{ cyclic permutation } | V^{\otimes p} \} = \dim V.$$ 
\end{proof}

\begin{cor}
If $\U \subset \G(\adele_E)$ and $U' \subset \G(\adele_E)$ are related level structures, then 
$$\tr \{ \sigma | H^{*}(\M_\U, \loc_{\mathbb{C}}) \} = \tr \{ \sigma | H^{*}(\M'_{\U'}, \loc'_{\mathbb{C}}) \}.$$
\end{cor}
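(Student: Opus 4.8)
The plan is to deduce the corollary directly from the preceding lemma, using only the definition of \emph{related} level structures. Recall that $\U$ and $\U'$ are related means that there is a common level structure on the $F$-side --- that is, a finite combination $\sum_U c_U \mathbf{1}_U du$ --- which matches both $\mathbf{1}_\U d\tilde u$ (on $\G(\adele_E)$) and $\mathbf{1}_{\U'} d\tilde u'$ (on $\G(\adele_{E'})$), in the sense of the definition of matching level structures. So first I would invoke the lemma for the pair $(\U, \sum_U c_U \mathbf{1}_U du)$ to obtain
\[
\tr\{\sigma \mid H^{*}(\M_\U, \loc_{\mathbb{C}})\} = \sum_U c_U \dim H^{*}(M_U, L_{\mathbb{C}}),
\]
and then invoke the lemma a second time for the pair $(\U', \sum_U c_U \mathbf{1}_U du)$ to obtain
\[
\tr\{\sigma \mid H^{*}(\M'_{\U'}, \loc'_{\mathbb{C}})\} = \sum_U c_U \dim H^{*}(M_U, L_{\mathbb{C}}).
\]
Since both left-hand sides equal the \emph{same} right-hand side $\sum_U c_U \dim H^{*}(M_U, L_{\mathbb{C}})$, the two traces are equal, which is exactly the claimed identity.

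The only genuine point requiring care is bookkeeping: one must check that the lemma applies verbatim on both sides, i.e.\ that the representations $\widetilde\rho$, $\rho$ and $\widetilde\rho'$, $\rho$ are compatible in the sense of $\S\ref{compatibleloc}$, and that the same coefficients $c_U$ and the same collection of compact opens $U \subset \G(\finadele_F)$ and the same local system $L \to M_U$ appear in both applications --- this is guaranteed precisely because "related" is defined via a \emph{common} $F$-side level structure. The underlying $F$-side locally symmetric spaces $M_U$ and the $F$-side local system $L$ depend only on that common datum, not on which of $E$, $E'$ we started from, so the two right-hand sides are literally identical expressions.

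I expect no real obstacle here; the corollary is a formal transitivity statement. The substantive content --- the comparison of equivariant cohomology traces with a weighted sum of $F$-dimensions --- has already been done in the lemma via Matsushima's formula, the K\"unneth formula for $(\g,K)$-cohomology, and the elementary trace identity $\tr\{\text{cyclic permutation} \mid V^{\otimes p}\} = \dim V$. The corollary merely observes that "matches the same thing" is a transitive-through-a-midpoint relation, so the role of the common $F$-side data is to serve as that midpoint.
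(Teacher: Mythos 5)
Your proposal is correct and is exactly the argument the paper intends (the corollary is stated without proof there): apply the preceding lemma once to $\U$ and once to $\U'$ against the common $F$-side level structure $\sum_U c_U \mathbf{1}_U\,du$ from the definition of \emph{related}, and observe both traces equal the same sum $\sum_U c_U \dim H^{*}(M_U, L_{\mathbb{C}})$. Your bookkeeping remark about the common midpoint data is the right (and only) point to check, so nothing is missing.
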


We optimistically conjecture that an analogous conjecture is true of torsion cohomology.

\begin{conj}[Galois structure] \label{galoisstructure}
For matching rationally acyclic local systems $\loc \rightarrow \M_U$ and $\loc' \rightarrow \M_{\U'},$ the graded $\mathbb{Z}[\sigma]$-modules $H^{*}(\M'_{\U'}, \loc')$ and $H^{*}(\M_\U, \loc)$ are isomorphic.
\end{conj}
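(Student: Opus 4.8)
Since the local systems are rationally acyclic, $H^{*}(\M_\U, \loc)$ and $H^{*}(\M'_{\U'}, \loc')$ are finite abelian groups, so the assertion is an isomorphism of finite graded $\mathbb{Z}[\sigma]$-modules, and the plan is to deduce it from a hypothetical torsion base change map, paralleling the way the preceding Corollary was deduced at the level of complex coefficients. The naive attempt is to imitate that argument integrally: there, Matsushima's formula split cohomology along automorphic representations, the $\overpi$ over $E$ contributing to $H^{*}$ were cyclic $p$-fold tensor powers $\pi_{\infty}^{\boxtimes p}$ of representations descended from $F$, and the comparison reduced to the observation that $V^{\otimes p}$ with cyclic permutation is a $\mathbb{Z}[\sigma]$-module functorial in $V$, hence produces the same output for the extension $E$ as for $E'$, both being governed by the common level structure over $F$. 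The obstruction to running this integrally is that there is no integral Matsushima formula — torsion classes do not decompose along automorphic representations — so $H^{*}(\M_\U, \loc)$ cannot simply be reassembled by hand out of $p$-fold tensor powers of $F$-side building blocks.

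The natural program is therefore to substitute, for Matsushima's formula, a genuine torsion base change map. Concretely, one would want the diagonal $L$-homomorphism of $\S \ref{torsionbasechange}$ to induce a Hecke-equivariant map $H^{*}(M_U, L) \to H^{*}(\M_\U, \loc)$ for each of the finitely many $U$ entering the matching of $\U$, and one would want $H^{*}(\M_\U, \loc)$ to carry a short filtration whose graded pieces are: a \emph{base-change part}, assembled from the $H^{*}(M_U, L)$ using the integers $c_U$ and carrying the $\mathbb{Z}[\sigma]$-structure of a cyclic tensor power (so that $\sigma$ acts through the character computed in the proof of the preceding Lemma); and a \emph{new part}, annihilated by $P(\sigma)$ and hence a module over $\mathbb{Z}[\sigma]/(P(\sigma)) = \mathbb{Z}[\zeta_p]$. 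Because $\U$ and $\U'$ are \emph{related}, both match the same virtual combination $\sum_U c_U \mathbf{1}_U du$, so the base-change parts over $E$ and over $E'$ would be built from identical $F$-data and would be isomorphic as $\mathbb{Z}[\sigma]$-modules; a comparison of filtrations would then finish the argument. The unconditional numerical shadow of exactly this picture — an equality of logarithmic orders of the relevant $\sigma - 1$ and $P(\sigma)$ isotypic subquotients, up to powers of $p$ — is what Theorem \ref{cohomologycomparison} establishes.

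Two obstacles keep this at the level of a conjecture. First, the integral inputs are unavailable: torsion base change functoriality is open, and even granting a map $\phi_{*}$ one would still have to show it splits off the base-change part on the nose, pin down the gluing of the $\sigma = 1$ and $P(\sigma) = 0$ pieces over $\mathbb{F}_p$ (where $\mathbb{Z}[\sigma] = \mathbb{Z}[x]/(x^{p}-1)$ no longer decomposes), and control the elementary-divisor type of the $\mathbb{Z}[\zeta_p]$-module forming the new part. Second — and this is the real crux — the notion of \emph{related} level structures matches only the \emph{base-change} automorphic data; it imposes nothing on Galois orbits of non-Galois-stable representations, so it is not at all clear a priori why the new part over $E$ should even be isomorphic to the new part over $E'$. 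Closing this gap seems to demand a further input, for instance the Calegari--Emerton expectation (Remark \ref{calegariemerton}) that in the rationally acyclic, strongly acyclic regime there is essentially no exotic non-base-change torsion in the first place; and it is the absence of such an input, rather than any routine verification, that is the main obstacle. A derived-category reformulation — a perfect complex of $\mathbb{Z}[\sigma]$-modules, functorial in the Hecke data, computing both sides — would package the first obstacle cleanly but meets the same second one, and the spaces $\M_\U$ and $\M'_{\U'}$, attached to different fields $E$ and $E'$, carry no evident geometric relation to exploit in its place.
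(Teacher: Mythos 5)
This statement appears in the paper as an open conjecture with no proof: the paper only motivates it by the characteristic-zero Lemma and Corollary (Matsushima's formula plus the cyclic-permutation trace identity), the derived tensor-product prediction of Remark \ref{derivedtensorproduct}, and the unconditional numerical shadow proved in Theorem \ref{cohomologycomparison}, and your write-up correctly treats it as conjectural and reproduces essentially this same motivating picture and the same obstacles (no integral Matsushima decomposition, unavailability of torsion base change, no control of the non-base-change part). The only caveat is that leaning on Remark \ref{calegariemerton} for ``no exotic new torsion'' is a stretch, since the Calegari--Emerton conjecture concerns subexponential growth of mod-$p$ cohomology in towers rather than absence of non-base-change torsion classes.
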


Let $E' = F \times ... \times F / F$ be a split extension of degree $p$ and $E / F$ is a cyclic Galois extension of degree $p.$  For simplicity, suppose that there are compact open subgroups $\U \subset \G(\finadele_E), J \subset \G(\finadele_F)$ for which $\mathbf{1}_\U du$ matches $\mathbf{1}_J dj$; examples of this sort are constructed in $\S \ref{unrammatching}.$  If we simply take $\U' = J \times ... \times J,$ then $\U$ and $\U'$ are related and $\M'_{\U'} = M_J \times ... \times M_J.$  Suppose that $\loc \rightarrow \M_\U$ and $L \rightarrow M_J$ are matching (rationally acyclic) local systems (see $\S \ref{compatibleloc}$).  Then Conjecture $\ref{galoisstructure}$ predicts that 
$$H^{*}(\M_\U, \loc) \cong_{\mathbb{Z}[\sigma]} H^{*}(\M'_{\U'}, L^{\boxtimes p}) =  H^{*}(M_J \times ... \times M_J, L^{\boxtimes p}) \cong \left(H^{*}(M_J,L) \right)^{\widehat{\otimes} p},$$
where $\widehat{\otimes} p$ denotes the $p$-fold left derived tensor product.  For example, suppose the $\ell$-primary part of $H^{*}(M_J, L)$ is isomorphic to $\mathbb{Z} / \ell \mathbb{Z}$ in degree $d,$ generated by $c \in H^d(M_J, L).$  Conjecture \ref{galoisstructure} predicts the existence of a graded $\mathbb{Z}[\sigma]$ submodule $\widetilde{C} \subset H^{*}(\M_U, \loc)$ isomorphic to $H^*( (\mathbb{Z} \xrightarrow{\ell} \mathbb{Z})[d]^{\otimes p});$ the action of $\sigma$ on the latter group is induced by cyclic permutation of the tensor factors. 

\begin{rem} \label{derivedtensorproduct}
One computes that $H^*( (\mathbb{Z} \xrightarrow{\ell} \mathbb{Z})[d]^{\otimes p})$ is isomorphic, as a graded $\mathbb{Z}[\sigma]$-module, to an exterior algebra on the $\mathbb{Z} / \ell\mathbb{Z}$-vector space $(\mathbb{Z} / \ell \mathbb{Z}) [\sigma] / \langle \sigma^{p-1} + ... + \sigma + 1 \rangle$ starting in degree $p(d-1) + 1.$  
\end{rem}

We mention, in passing, an even more speculative conjecture, pertaining to Hecke equivariance of this correspondence on cohomology.  

\begin{conj}[Hecke equivariance of base change]
Suppose $c, \widetilde{C}$ are as above. Suppose that $c$ is a $\G$-Hecke eigenclass, i.e. for any $v$ where $\G / F_v$ is split and $J_v$ is hyperspecial, then 
$$T_{\rho}(c) = a_{\rho} c$$
for all $T_{\rho} \in \mathcal{H}_v \cong \mathrm{Rep}( {}^L \G).$  Then for any $\tilde{c} \in \widetilde{C},$
$$T_{\tilde{\rho}} c = a_{\tilde{\rho} \circ \phi} \tilde{c}.$$ 
\end{conj}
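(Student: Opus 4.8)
The plan is to bootstrap from the split auxiliary extension $E' = F \times \cdots \times F$, where the statement becomes a K\"{u}nneth computation, back to the genuine cyclic extension $E/F$ by means of Conjecture \ref{galoisstructure}. Fix a place $v$ where $\G / F_v$ is split and $J_v$ is hyperspecial; enlarging the data if necessary we may also assume $v$ splits completely in $E$, so that $R_{E/F}\G(F_v) \cong \G(F_v)^p$ and the unramified Hecke algebra of $R_{E/F}\G$ at $v$ is canonically $\mathcal{H}_v^{\otimes p}$. First I would upgrade Conjecture \ref{galoisstructure} to a \emph{Hecke-equivariant} isomorphism of graded $\mathbb{Z}[\sigma]$-modules $H^{*}(\M_\U, \loc) \cong H^{*}(\M'_{\U'}, L^{\boxtimes p}) = H^{*}(M_J^{\times p}, L^{\boxtimes p})$ at all such $v$; this is the natural strengthening, since the hypothetical map $\phi_*$ of $\S\ref{torsionbasechange}$ is expected to be Hecke-equivariant and to factor the comparison. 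Granting this, $\widetilde C$ is identified with a graded $\mathbb{Z}[\sigma]$-submodule of $H^{*}(M_J^{\times p}, L^{\boxtimes p})$, and it suffices to show that $T_{\tilde\rho}$, viewed inside $\mathcal{H}_v^{\otimes p}$, acts on it by the scalar $a_{\tilde\rho\circ\phi}$.

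Next I would run the K\"{u}nneth computation at $\ell$. Writing $R\Gamma(M_J, L)$ for the cochain complex, one has $R\Gamma(M_J^{\times p}, L^{\boxtimes p}) \simeq R\Gamma(M_J, L)^{\otimes^{\mathbf{L}} p}$, with $\mathcal{H}_v^{\otimes p}$ acting factor by factor. Localizing at $\ell$, the hypothesis that the $\ell$-primary part of $H^{*}(M_J, L)$ is $\mathbb{Z}/\ell\mathbb{Z}$ in degree $d$, generated by the eigenclass $c$, should be used to split off from $R\Gamma(M_J, L)_{(\ell)}$, as a complex of $\mathcal{H}_v$-modules, a direct summand quasi-isomorphic to the two-term complex $(\mathbb{Z}\xrightarrow{\ell}\mathbb{Z})[d]$ on which every $T_\rho \in \mathcal{H}_v$ acts by the scalar $a_\rho$. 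Taking the $p$-fold derived tensor power, the operator $T_{\rho_1}\otimes\cdots\otimes T_{\rho_p}\in\mathcal{H}_v^{\otimes p}$ then acts on $(\mathbb{Z}\xrightarrow{\ell}\mathbb{Z})[d]^{\otimes p}$, hence on its cohomology $\widetilde C$ (compare Remark \ref{derivedtensorproduct}), by the scalar $a_{\rho_1}\cdots a_{\rho_p}$.

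It remains to match the two sides of the Satake dictionary. For $R_{E'/F}\G = \G^p$ the dual group is $\widehat{\G}^p \rtimes \langle\sigma\rangle$, the map $\phi$ is the diagonal embedding, and for $\tilde\rho$ with $\tilde\rho|_{\widehat{\G}^p} = \rho_1 \boxtimes \cdots \boxtimes \rho_p$ one has $\tilde\rho\circ\phi = \rho_1\otimes\cdots\otimes\rho_p$ as a representation of $\widehat{\G}$ (restriction along the diagonal is the tensor product). Since the Satake isomorphism $\mathcal{H}_v \cong \mathrm{Rep}(\widehat{\G})$ carries convolution to tensor product, multiplicativity of eigenvalues gives $a_{\tilde\rho\circ\phi}(c) = a_{\rho_1}(c)\cdots a_{\rho_p}(c)$, which matches the scalar found above; and under the identification of $\mathcal{H}_v^{\otimes p}$ with the Hecke algebra of $\G^p$ at $v$, the operator $T_{\tilde\rho}$ goes to $T_{\rho_1}\otimes\cdots\otimes T_{\rho_p}$. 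A general $\tilde\rho$ (not an exterior tensor, or genuinely $\sigma$-twisted) decomposes into $\sigma$-orbits of such pieces, the contribution of the twist being exactly what $a_{\tilde\rho\circ\phi}$ records; linearity of $\tilde\rho\mapsto T_{\tilde\rho}$ and of the action reduces this to the pure-tensor case. (At a place $v$ that is inert or ramified in $E$ one would instead invoke the base-change homomorphism $\widetilde{\mathcal{H}}_v \to \mathcal{H}_v$ together with the ``fundamental lemma for the spherical unit'' of $\S\ref{tamematching}$, whose Satake-theoretic content is precisely the identity $T_{\tilde\rho}\mapsto T_{\tilde\rho\circ\phi}$.)

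The main obstacle is not the combinatorics but two points of principle. The first is that the argument is unavoidably conditional: it rests on Conjecture \ref{galoisstructure}, and in fact on a Hecke-equivariant form of it, for which no construction is currently available, precisely because the integral base change map $\phi_*$ is hypothetical. The second, more technical difficulty is the splitting step in the K\"{u}nneth computation: at a prime $\ell$ that is not ``semisimple'' for the Hecke action, the complex $R\Gamma(M_J, L)_{(\ell)}$ need not decompose as a complex of $\mathcal{H}_v$-modules, so the passage from ``$c$ is a Hecke eigenclass'' to ``the two-term complex computing the $\ell$-part of $R\Gamma$ is a Hecke-stable summand on which $T_\rho$ acts by $a_\rho$'' requires care; one likely has to work with the full dg-module structure of $R\Gamma$ over $\mathcal{H}_v$ (or over a derived Hecke algebra) and verify that the derived tensor power is Hecke-equivariant at the level of complexes, not merely on cohomology.
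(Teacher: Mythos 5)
The statement you are asked to prove is labelled a conjecture in the paper, and the paper offers no proof of it whatsoever: it is introduced as ``an even more speculative conjecture,'' resting on the hypothetical torsion base change map $\phi_{*}$ of $\S\ref{torsionbasechange}$, which is not constructed anywhere. Your proposal does not close this gap -- and, to your credit, you say so -- but as a proof it fails at the first step: you assume a \emph{Hecke-equivariant strengthening} of Conjecture \ref{galoisstructure}. That strengthening is not a harmless technical upgrade; it essentially \emph{is} the statement to be proved (Hecke equivariance of the identification of $\widetilde{C}$ with a submodule of $H^{*}(M_J^{\times p}, L^{\boxtimes p})$ at split places is exactly the content of the conjecture there), so the argument is circular at split places and vacuous elsewhere. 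The parenthetical treatment of inert and ramified places is where the real content of base-change Hecke equivariance lives, and invoking the base-change homomorphism of Hecke algebras together with the fundamental lemma does not help: those are identities of traces of operators on \emph{complex} representations (as in $\S\ref{prelimbc}$--$\S\ref{tamematching}$), and they carry no information about the action of Hecke operators on torsion classes, which is precisely what is at stake.

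Two further steps would not survive scrutiny even granting the conditional framework. First, the passage from ``$c$ is a Hecke eigenclass'' to ``$R\Gamma(M_J,L)_{(\ell)}$ has a Hecke-stable direct summand quasi-isomorphic to $(\mathbb{Z}\xrightarrow{\ell}\mathbb{Z})[d]$ on which every $T_\rho$ acts by the scalar $a_\rho$'' is unjustified: the eigenvalue is only defined on the cohomology class (indeed only modulo $\ell$), and lifting it to a scalar action on a subcomplex requires a splitting of $R\Gamma$ as a complex of $\mathcal{H}_v$-modules that need not exist; you acknowledge this, but without it the computation of the $\mathcal{H}_v^{\otimes p}$-action on $\widetilde{C}$ collapses. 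Second, the reduction of a general $\sigma$-twisted $\tilde\rho$ to pure exterior tensors by ``linearity'' glosses over the point that the twisted trace (the contribution of $\overpi(\sigma)$, which is what $a_{\tilde\rho\circ\phi}$ records at non-split data) is not a linear combination of untwisted eigenvalue computations on $\widetilde{C}$ unless one already knows how $\sigma$ interacts with the Hecke action on the torsion module -- again, the content of the conjecture. In short: the paper proves nothing here because nothing is currently provable, and your sketch is best read as a plausibility argument explaining \emph{why} the conjecture is the expected shape of torsion base change, not as a proof.
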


\subsection{A numerical consequence to be expected of torsion base change}

Let $\U, \U', \loc, L, E, E', F$ be as in the previous section. \medskip
 
For Galois-equivariant, rationally acyclic local systems $\loc \rightarrow \M_\U$ as above, where $\Galois = \langle \sigma \rangle,$ we consider the alternating product
$$R_{\sigma}(\loc) := \prod {}^{*} \frac{|H^i(\M_\U, \loc)^{\sigma - 1}|}{ |H^i(\M_\U, \loc)^{P(\sigma)}|^{\frac{1}{p-1}}}$$
In support of Conjecture \ref{galoisstructure} and Remark \ref{derivedtensorproduct}, we compute in \cite[
Lemma 5.3]{Lip1} that  
\begin{equation}
R_{\sigma}(\loc) =  R_{\sigma}(L^{\boxtimes p}) = R(L)^p, R(L) := \prod {}^{*} |H^i(M_J, L)|,
\end{equation} 
at least up to powers of $p.$  \bigskip

The invariant $R_{\sigma}(\loc)$ is very closely related to the twisted Reidemeister torsion of $\loc$ (see \cite[
$\S 1.3$]{Lip1}).  It is a miraculous fact (see \cite[Theorem 0.2]{BZ2}, \cite[
Theorem 1.23]{Lip1}) that the twisted Reidemeister torsion is closely related to the twisted analytic torsion $\tau_{\sigma}(\loc),$ an equivariant spectral invariant of the metrized, equivariant local system $\loc$ (described in \cite[
$\S 1.1$]{Lip1}).  The main content of this paper is comparing $\tau_{\sigma}(\loc)$ to the untwisted analytic torsion $\tau(L)$ of a matching (see $\S \ref{compatibleloc}$) local system.  A prototypical example:  

\begin{sample}
Let $E/F$ be everywhere unramified.  Let $\U \subset \G(\finadele_E)$ be a parahoric level structure (see Definition \ref{parahoric}) and $U \subset \G(\finadele_F)$ an associated level structure (see Definition \ref{associatedlevel}).  Then  
$$\tau_{\sigma}(\M_\U, \loc) = \tau(M_U,L)^p$$  
\end{sample}

(see Theorem \ref{abstracttorsion} combined with Theorem \ref{finalmatching} for a more general statement).  By applying appropriate versions, both twisted and untwisted, of the Cheeger-M\"{u}ller theorem  (see \cite[
$\S 1, \S 2$]{Lip1}), we arrive at equalities
$$R_{\sigma}(\M_\U, \loc) \sim \tau_{\sigma}(\M_\U, \loc) = \tau(M_U,L)^p = R(M_U, \loc)^p,$$
where $\sim$ denotes equality up to powers of $p.$  This reasoning is applicable to those matching pairs $\loc, L$ of local systems described in $\S \ref{weirdloc}.$

\section[Representations and vector bundles]{Representations and equivariant vector bundles}
\label{homogeneous}

The purpose of this chapter is to discuss representation theoretic formulations of equivariant vector bundles, bundle-valued differential forms, local systems of vector spaces, equivariant local sytems, and integral structures on these local systems.

\subsection{Equivariant vector bundles}
\label{kreps}

Let $\G$ be a semisimple algebraic group over $\R.$  Let $G = \G(\R)$ with maximal compact subgroup $K$ and let $X = G/K.$  Let $e = eK$ be a fixed choice of basepoint.

\begin{lem} \label{hom}
There is an equivalence of categories 
$$\{ G \text{-equivariant real (complex) vector bundles } V \rightarrow X \} \rightarrow \{ \text{real (complex) representations of } K \}$$
\end{lem}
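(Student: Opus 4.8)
The functor is the obvious one: given a $G$-equivariant vector bundle $V \to X$, restrict to the fibre $V_e$ over the basepoint $e = eK$; since $K$ is exactly the stabilizer of $e$ in $G$, it acts linearly on $V_e$, giving a representation of $K$. I will construct a quasi-inverse and check the two natural isomorphisms. Concretely, given a representation $(\tau, W)$ of $K$, form the associated bundle $G \times_K W := (G \times W)/\!\sim$, where $(gk, w) \sim (g, \tau(k)w)$, with its natural projection to $G/K = X$ and its left $G$-action $g' \cdot [g, w] = [g'g, w]$. One checks this is a well-defined $G$-equivariant vector bundle (local triviality follows from local sections of $G \to G/K$, which exist since $K$ is compact and $X$ is a manifold — indeed $G \to G/K$ is a locally trivial $K$-principal bundle).

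The two compositions are then shown to be naturally isomorphic to the identity. Starting from a representation $W$: the fibre of $G \times_K W$ over $e$ is $\{[e, w] : w \in W\}$, and the $K$-action sends $[e, w] = [k^{-1}\cdot k, w]$... more carefully, $k \cdot [e, w] = [k, w] = [e k \cdot k^{-1}\cdot ...]$; the cleanest formulation: $k \cdot [e, w] = [k, w] = [e, \tau(k) w]$ using the equivalence relation with $g = e$, $gk = k$. So the fibre over $e$ recovers $(\tau, W)$ on the nose. Starting from a $G$-equivariant bundle $V$ with fibre $V_e = W$: the map $G \times_K W \to V$, $[g, w] \mapsto g \cdot w$ (where $g\cdot w \in V_{ge}$ via the equivariant structure) is well-defined (if $(gk, w) \sim (g, kw)$ then $gk \cdot w = g \cdot (k \cdot w)$ by functoriality of the action), fibrewise linear, $G$-equivariant, and bijective on each fibre, hence an isomorphism of $G$-equivariant bundles. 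Functoriality of both constructions with respect to bundle maps / intertwiners is immediate from the formulas.

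The main point requiring care — and the only genuine obstacle — is verifying that $G \times_K W \to X$ is actually a locally trivial vector bundle (and smooth), rather than merely a set-theoretic fibre space; this rests on the existence of local continuous (smooth) sections of the principal $K$-bundle $G \to G/K$, which holds because $K$ is a compact subgroup of the Lie group $G$ so that $G \to G/K$ is a smooth fibre bundle. Everything else is formal: the equivalence relation, the $G$-action, and the naturality squares are routine diagram-chasing. The real versus complex cases are handled identically, so I would state and prove it once with the scalar field left ambiguous. No deep input is needed; this is the standard induced-bundle / Borel construction adapted to the homogeneous-space setting.
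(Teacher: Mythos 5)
Your proposal is correct and follows essentially the same route as the paper: restrict to the fibre over the basepoint $e$ to get the $K$-representation, and use the associated bundle $(G\times W)/K$ with the evident $G$-action as the quasi-inverse, checking the two compositions. The paper's proof is just a terser version of this, leaving the local triviality and naturality verifications implicit.
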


\begin{proof}
(See \cite[$\S 3.2$]{BV})  To any $G$-equivariant vector bundle $V \rightarrow X,$ the fiber $V_e$ admits a $K$-representation.  \medskip

Given any $K$-representation $\rho: K \rightarrow GL(W),$ we can form $V = (G \times W) / K$ where $K$ acts diagonally by $(g,w) \cdot k = (gk, \rho(k^{-1}) w).$  Projection onto the first factor makes $V$ an equivariant vector bundle with action $g' \cdot (g,w) = (g'g, w)$ covering the left translation action of $G$ on $X.$  \medskip 

These two constructions define inverse equivalences of categories.
\end{proof}

\subsection{$G$-representations and flat homogeneous vector bundles}
\label{greps}

The image under the inverse equivalence from Lemma \ref{hom} of those representations $\rho: K \rightarrow GL(W)$ which arise as the restriction of a representation $\rho: G \rightarrow GL(W)$ can be described as $G$-equivariant vector bundles equipped with a $G$-compatible flat connection.

\begin{defn}
Let $V \rightarrow X$ be a $G$-equivariant vector bundle equipped with connection $\nabla_V.$  We say that $\nabla_V$ is \emph{$G$-compatible} if parallel transport along every path $\gamma$ commutes with translation by $G$: $g \cdot P_{\gamma} = P_{g \cdot \gamma}.$
\end{defn}

\begin{lem} \label{flathom}
There is an equivalence of categories
\begin{eqnarray*}
\{ (V, \nabla_V): V &=&  G \text{-equivariant real (complex) vector bundle over } X, \\
 \nabla_V &=& G \text{-compatible flat connection }  \}
\end{eqnarray*} 
$$\rightarrow \{ \text{real (complex) representations } \rho: G \rightarrow GL(W)\}.$$
\end{lem}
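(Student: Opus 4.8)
The plan is to construct the two functors by hand and check they are quasi-inverse, taking Lemma \ref{hom} as a black box for the underlying equivariant vector bundles and only keeping track of the extra flat-connection data. Throughout I use that $X = G/K$ is contractible — the Cartan decomposition identifies it diffeomorphically with a Euclidean space — hence simply connected, so any flat connection on a bundle over $X$ has a global frame of parallel sections and parallel transport is path-independent. \emph{From representations to bundles with connection:} given $\rho : G \to GL(W)$, let $V = (G \times W)/K$ be the equivariant bundle attached to $\rho|_K$ as in Lemma \ref{hom}. The map $G \times W \to X \times W$, $(g,w) \mapsto (gK,\, \rho(g) w)$, is constant on $K$-orbits (since $\rho(gk)\rho(k^{-1}) = \rho(g)$) and descends to a bundle isomorphism $V \xrightarrow{\ \sim\ } X \times W$; transporting the $G$-action through it, $g'$ acts on $X \times W$ by $(x, v) \mapsto (g' x,\, \rho(g') v)$. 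Equip $V$ with the flat connection pulled back from the tautological connection on $X \times W$; parallel transport for the latter is $\mathrm{id}_W$ on every pair of fibers, so $(g\cdot)_{x_1} \circ P_\gamma \circ (g\cdot)_{x_0}^{-1} = \rho(g)\,\mathrm{id}_W\,\rho(g)^{-1} = \mathrm{id}_W = P_{g \cdot \gamma}$, i.e. the connection is $G$-compatible. On morphisms, a $G$-intertwiner induces a connection-preserving equivariant bundle map.

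\emph{From bundles with connection to representations:} let $(V, \nabla_V)$ be $G$-equivariant with $G$-compatible flat $\nabla_V$, put $W := V_e$ (the fiber over the basepoint $e = eK$), and let $P_{e \to x} : W \to V_x$ be the (path-independent) parallel transport. Then $\Phi : X \times W \to V$, $(x,w) \mapsto P_{e\to x}(w)$, is an isomorphism carrying the tautological flat connection to $\nabla_V$. Pulling the $G$-action back through $\Phi$, and using $G$-compatibility in the form $(g\cdot)_x \circ P_{e\to x} = P_{ge\to gx} \circ (g\cdot)_e$ together with $P_{ge\to gx} = P_{e\to gx} \circ P_{e\to ge}^{-1}$, one computes $\Phi^{-1}\big(g \cdot \Phi(x,w)\big) = \big(gx,\, \rho(g)w\big)$ where $\rho(g) := P_{e\to ge}^{-1} \circ (g\cdot)_e \in GL(W)$. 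Comparing $g_1 \cdot (g_2 \cdot (x,w))$ with $(g_1 g_2) \cdot (x,w)$ forces $\rho(g_1 g_2) = \rho(g_1)\rho(g_2)$, and smoothness of $\rho$ follows from smoothness of the $G$-action and of parallel transport in its endpoints. This is the heart of the argument.

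It remains to see these are quasi-inverse. Starting from $\rho$: the trivialization of the first paragraph identifies $W$ with $V_e$ and $(g\cdot)_e$ with $\rho(g)$, while $P_{e\to ge} = \mathrm{id}_W$ under the tautological connection, so the representation recovered in the second paragraph is again $\rho$. Conversely, $\Phi$ is a natural isomorphism of $(V,\nabla_V)$ with the bundle-with-connection built from its $\rho$, as one checks directly that $\Phi$ is both connection-preserving and equivariant. On morphisms, a connection-preserving equivariant bundle map becomes, under $\Phi$ and $\Phi'$, a map $X \times W \to X \times W'$ preserving tautological connections — hence constant, given by a single linear $A : W \to W'$ — whose equivariance says exactly $A\rho(g) = \rho'(g)A$; conversely every intertwiner arises this way. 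Finally, the forgetful functor dropping $\nabla_V$ composed with the functor of Lemma \ref{hom} is restriction $\{G\text{-reps}\} \to \{K\text{-reps}\}$, which pins down the essential image as claimed in the paragraph preceding the lemma.

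The only real obstacle is bookkeeping rather than mathematics: one must fix, once and for all, that ``$g \cdot P_\gamma = P_{g\cdot\gamma}$'' means the equality $(g\cdot)_{x_1} \circ P_\gamma \circ (g\cdot)_{x_0}^{-1} = P_{g\cdot\gamma}$ of maps $V_{gx_0} \to V_{gx_1}$, and then be careful with variances so that $g \mapsto \rho(g)$ is a homomorphism and not an anti-homomorphism. The appeals to simple-connectedness of $G/K$ (for the global parallel trivialization) and the check that $\rho$ is smooth are the remaining points needing mild care.
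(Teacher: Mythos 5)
Your proposal is correct and follows essentially the same route as the paper: in one direction you define $\rho(g)$ as parallel transport back to the basepoint composed with the translation action (the paper's $g\cdot v = P_\gamma^{-1}\circ t_g(v)$), and in the other you build $V_\rho=(G\times W)/K$ and transport the trivial connection through the same explicit trivialization $X\times W\cong V_\rho$, checking $G$-compatibility via path-independence. Your write-up is just a more careful version of the paper's argument (adding the homomorphism bookkeeping, smoothness of $\rho$, and the check on morphisms, which the paper leaves implicit).
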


\begin{proof}
Choose any path $\gamma$ from $e$ to $gK.$  Let $P_{\gamma}$ denote parallel translation along $\gamma$ with respect to the connection $\nabla_V.$  Note that this is independent of the path chosen because $X$ is contractible and $\nabla_V$ is flat.  For any $v \in V_e,$ let 
$$g \cdot v = P_{\gamma}^{-1} \circ t_g(v),$$
where $t_g$ denotes the action on $V$ covering left translation by $g.$  Let $\gamma'$ be any path from $e$ to $hK.$  We compute that
\begin{eqnarray*}
g \cdot (h \cdot v) &=& P_{\gamma}^{-1} \circ t_g \circ P_{\gamma'}^{-1} \circ t_h (v) \\
&=& P_{\gamma}^{-1} \circ (t_g P_{\gamma'}^{-1} t_g^{-1}) \circ t_g t_h(v) 
\end{eqnarray*}
By assumption, the $G$-action respects parallel transport, which is to say that 
$$t_g P_{\gamma'}^{-1} t_g^{-1} = P_{g \cdot \gamma'}^{-1}$$
as isomorphisms $V_{ghK} \rightarrow V_{gK}.$  Therefore, the above computation further simplifies to 
\begin{eqnarray*}
&=& P_{\gamma}^{-1} \circ P_{g \cdot \gamma'}^{-1} \circ t_g t_h(v) \\
&=& (P_{g \cdot \gamma'} \circ P_{\gamma})^{-1} \circ t_{gh}(v) \\
&=& (P_{(g \cdot \gamma') \cdot \gamma})^{-1} \circ t_{gh}(v) \\
&=& (gh) \cdot v.
\end{eqnarray*}
In the above, $(g \cdot \gamma') \cdot \gamma$ denotes the concatenation of the paths $g \cdot \gamma'$ and $\gamma.$  Thus, any pair $(V \rightarrow X, \nabla_V)$ gives rise to a representation $\rho_V: G \rightarrow GL(W).$  Furthermore, by choosing the constant path from $e$ to $e$, we see that $\rho_V|_K$ is the isotropy representation of $K$ on $V_e.$  \medskip

Conversely, suppose we are give a representation $\rho: G \rightarrow GL(W).$  This gives rise to a unique $G$-equivariant vector bundle $V_{\rho} \rightarrow X$ by the ``mixing construction" of lemma \ref{hom}.  Namely, $V_{\rho} = (G \times W)/K \rightarrow X$ with bundle projection given by the first coordinate projection, where $K$ acts diagonally by $(g,v) \cdot k = (gk, \rho(k^{-1}) v).$  \smallskip

Consider the trivial bundle $X \times W \rightarrow X.$  The map $\phi: (gK, v) \mapsto (g, g^{-1} v) / \sim$ defines a $G$-equivariant isomorphism 
$$\begin{CD} 
X \times W @>\phi>> V_{\rho}  \\
@VVV                              @VVV \\
X @>=>> X
\end{CD}$$
with inverse isomorphism $\phi^{-1}: (g,v) \mapsto (g, gv).$  We define a connection $\nabla_V$ by $\phi \nabla^{\mathrm{triv}} \phi^{-1}$ where $\nabla^{\mathrm{triv}}$ denotes the trivial connection on the trivial vector bundle $X \times W \rightarrow X.$  Evidently, $\nabla_V$ is flat because parallel transport, defined through the parallel transport of $\nabla^{\mathrm{triv}},$ is independent of path.  Furthermore, $\nabla_V$ is $G$-compatible: for any path $\gamma_{hK,gK}$ from $gK$ to $hK$ and any vector $v \in W,$ parallel transport is given by
$$P_{\gamma_{hK,gK}}((g,v)) = (h,v) \in V_{\rho, hK};$$   
this parallel transport clearly commutes with the action of $G.$ \medskip

The above two constructions are mutually inverse isomorphisms.  
\end{proof}

\subsection{Rational and integral structures on local systems}
\label{rational}

Let $W$ be an $N$-vector space with $N/F$ a finite extension of number fields.  Let $\G$ be a semisimple group over $F.$  Let $\rho: \G \rightarrow R_{N/F} \GL(W)$ be an algebraic representation \emph{defined over $F$}.  Let $U \subset \G(\finadele_F)$ be a compact open subgroup and $K \subset \G(F_\R)$ be a maximal compact subgroup with $X = \G(F_\R) / K.$  We can form an associated local system of $N$-vector spaces
$$V'_{\rho} = \G(F) \backslash \left(  (\G(\adele_F) / KU) \times W \right) \rightarrow M_U = \G(F) \backslash \G(\adele_F) / KU$$
where $g \cdot (x,w) = (gx, \rho(g)w)$ for $g \in \G(F).$  This is well-defined because the stabilizer of $\G(F)$ acting on $gKU$ is $\G(F) \cap gKUg^{-1};$ this stabilizer is contained in $\G(F),$ which certainly preserves the fiber $W$ over $gKU.$ \medskip

Now suppose that $\mathcal{O} \subset W$ is an $O_N$-lattice.  The group $\G(\finadele_F)$ acts through $\rho$ on the space of $O_N$ lattices of $W,$ and we suppose that $U$ stabilizes $\mathcal{O}.$  Consider the local system of $O_N$-lattices over $X \times \G(\finadele_F) / U$ given by 

$$V'_{\rho, O_N} = \{ (x,gU,v): v \in \rho(g) \mathcal{O} \} \rightarrow X \times \G(\finadele_F) / U$$ 

with the bundle projection given by projection onto the first two factors.  The group $\G(F)$ acts on $V'_{\rho}$ diagonally: $g \cdot (x,gU,v) = (gx, gu, \rho(g) v).$  We let 

$$V''_{\rho,O_N} = \G(F) \backslash V'_{\rho, O_N} \rightarrow M_U$$

denote the quotient.  

\begin{lem} \label{integrallocalsystem}
The bundle $V''_{\rho,O_N} \rightarrow M_U$ enjoys the following properties:

\begin{itemize}
\item[(1)]
$V''_{\rho,O_N}$ is a local system of $O_N$-modules.  

\item[(2)]
$V''_{\rho, O_N} \otimes_{O_N} N = V'_{\rho}.$
\end{itemize}
\end{lem}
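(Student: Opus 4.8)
The plan is to unwind the definitions of the three bundles $V'_{\rho,O_N}$, $V''_{\rho,O_N}$ and $V'_\rho$ and exhibit explicit local trivializations compatible with the quotient by $\G(F)$. First I would verify \textbf{(1)}: the bundle $V'_{\rho,O_N} \to X \times \G(\finadele_F)/U$ is manifestly a local system of $O_N$-modules, since over a fixed $gU$ the fiber is the constant lattice $\rho(g)\mathcal{O}$ and $X$ is contractible, so it is (tautologically) a pullback of an $O_N$-local system on $\G(\finadele_F)/U$ (a discrete set). The key point is that $\G(F)$ acts on $V'_{\rho,O_N}$ covering its action on $X \times \G(\finadele_F)/U$, and this action is \emph{properly discontinuous and free modulo a finite stabilizer acting trivially on fibers}: the stabilizer of a point $(x, gU)$ is $\G(F) \cap gUg^{-1}$ (intersected with the stabilizer of $x$ in $\G(F_\R)$, which is finite since $\G$ is anisotropic here), and on the fiber over such a point this stabilizer acts through $\rho(\G(F)\cap gUg^{-1})$, which preserves the lattice $\rho(g)\mathcal{O}$ by the hypothesis that $U$ stabilizes $\mathcal{O}$. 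Hence the quotient $V''_{\rho,O_N}$ carries the structure of a local system of $O_N$-modules over $M_U$: I would produce explicit étale-local trivializations on $M_U$ by choosing, around each point, a small enough contractible neighborhood in $X \times \G(\finadele_F)/U$ on which the $\G(F)$-action is disentangled, and pushing down the constant $O_N$-module structure.

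Then I would prove \textbf{(2)}, that $V''_{\rho,O_N} \otimes_{O_N} N \cong V'_\rho$ as local systems on $M_U$. The natural map comes from the fiberwise inclusion: a point of $V''_{\rho,O_N}$ is a $\G(F)$-orbit of $(x,gU,v)$ with $v \in \rho(g)\mathcal{O}$, and sending this to the $\G(F)$-orbit of $(x, gU, v) \in (\G(\adele_F)/KU) \times W$ (using that $v \in \rho(g)\mathcal{O} \subset W$) defines a bundle map $V''_{\rho,O_N} \to V'_\rho$ of $O_N$-modules, hence after tensoring with $N$ a map $V''_{\rho,O_N}\otimes_{O_N} N \to V'_\rho$. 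To see it is an isomorphism it suffices to check it on fibers, where it is the inclusion $\rho(g)\mathcal{O} \otimes_{O_N} N \hookrightarrow W$; this is an isomorphism precisely because $\mathcal{O}$ is an $O_N$-lattice in the $N$-vector space $W$, so $\rho(g)\mathcal{O}$ spans $W$ over $N$. I would also note that both sides carry compatible flat/monodromy structures and the map is monodromy-equivariant by construction, so it is an isomorphism of local systems, not merely of bundles.

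I expect the only genuine subtlety — hence the ``main obstacle'' — to be bookkeeping around the $\G(F)$-quotient: one must be careful that the action on $X \times \G(\finadele_F)/U$ combined with the $\rho$-twist on fibers really does yield a local system rather than merely a sheaf, i.e. that finite stabilizers act trivially on the relevant fibers. This is where the hypothesis ``$U$ stabilizes $\mathcal{O}$'' is used in an essential way for the integral statement (1), and where the anisotropy of $\G$ (guaranteeing finiteness of the arithmetic stabilizers) enters. Everything else is formal unwinding of the mixing construction of Lemma \ref{hom}. I would present the argument by first treating the case $U$ small enough that $\G(F)$ acts freely (so $M_U$ is a genuine quotient manifold/orbifold and the descended bundle is transparently a local system), and then remarking that the general case follows since passing to a finite-index subgroup $U' \subset U$ with $\G(F)$ acting freely realizes $V''_{\rho,O_N}$ as the invariants of a finite group acting on a local system over a finite cover $M_{U'} \to M_U$, which is again a local system of $O_N$-modules.
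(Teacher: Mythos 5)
Your proposal is correct and follows essentially the same route as the paper: the heart of both arguments is the verification that the stabilizer of $(x,gU)$, contained in $\G(F)\cap gUg^{-1}$, carries $\rho(g)\mathcal{O}$ into itself because $U$ stabilizes $\mathcal{O}$ (so the quotient is a well-defined sheaf of $O_N$-modules), together with the fiberwise identity $\rho(g)\mathcal{O}\otimes_{O_N}N = W$ for part (2), with local constancy then deduced from that of $V'_{\rho}$. Your extra bookkeeping about free actions and finite stabilizers only adds detail the paper leaves implicit, so the two proofs agree in substance.
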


\begin{proof}
Property (2) follows because $(\rho(g) \mathcal{O})_N = \mathcal{O}_N = W.$  \medskip

Note that $V''_{\rho, O_N}$ is a well-defined sheaf whose stalks are $O_N$-modules.  Indeed, the stabilizer in $\G(F)$ of $(x, gU)$ is contained in $\G(F) \cap g U g^{-1}.$  But if $v \in \rho(g) \mathcal{O},$ then

$$\rho(gug^{-1}) v \in \rho(g) \rho(U) \rho(g)^{-1} \rho(g) \mathcal{O} = \rho(g) \rho(U) \mathcal{O} \subset \rho(g) \mathcal{O}$$

because $\rho(U)$ stabilizes $\mathcal{O}$ by assumption.  Because $V'_{\rho}$ is locally constant, property $(2)$ implies that $V''_{\rho,O_N}$ is locally constant as well.
\end{proof}

Let $\iota: N \hookrightarrow \C$ be a complex embedding.  

\begin{lem} \label{compatwithflathom}
Let $V'_{\rho, \iota} := V'_{\rho} \otimes_\iota \C \rightarrow M_U,$ a local system of $\C$-vector spaces.  Then $\widetilde{V'_{\rho, \iota}}$ is $\G(F_\R)$-equivariantly isomorphic to the flat bundle $V_{\rho,\iota}$ arising from the representation $\rho_{\iota}: \G(F_\R) \rightarrow GL(W_{\iota}).$  
\end{lem}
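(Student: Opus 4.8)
The plan is to unwind the definitions of both sides of the claimed isomorphism and exhibit an explicit $\G(F_\R)$-equivariant comparison map, essentially by pulling back the universal cover. First I would recall that $\widetilde{V'_{\rho,\iota}}$ denotes the pullback of the local system $V'_{\rho,\iota}\to M_U$ to the symmetric space $X = \G(F_\R)/K$ (more precisely, to the factor of $\G(\adele_F)/KU$ containing the archimedean component $X$), so that $\widetilde{V'_{\rho,\iota}}$ is by construction the $\G(F_\R)$-equivariant bundle $(\G(F_\R)\times W_\iota)/\!\!\sim$ obtained from the $\G(F)$-quotient by passing to the universal cover; tracing through the definition of $V'_\rho$ in $\S\ref{rational}$, the fiber over the basepoint $e$ is canonically $W_\iota$ with its $K$-action given by $\rho_\iota|_K$, and the bundle carries a flat connection coming from the local constancy of $V'_\rho$. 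Thus $\widetilde{V'_{\rho,\iota}}$ is a $\G(F_\R)$-equivariant vector bundle over $X$ equipped with a $\G(F_\R)$-compatible flat connection.

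Next I would invoke Lemma \ref{flathom}: the category of such pairs $(V,\nabla_V)$ is equivalent to the category of representations $\rho:\G(F_\R)\to GL(W)$, with the equivalence sending $(V,\nabla_V)$ to the representation on $V_e$ built from parallel transport along paths to $gK$. So it suffices to check that the representation of $\G(F_\R)$ on the fiber $(\widetilde{V'_{\rho,\iota}})_e$ recovered from parallel transport is precisely $\rho_\iota$. This is where the flat-connection description interacts with the adelic/arithmetic quotient: the flat connection on $V'_\rho$ is the one for which local sections coming from constant sections of the trivial bundle $(\G(\adele_F)/KU)\times W$ (before quotienting by $\G(F)$) are flat, and the $\G(F_\R)$-equivariant structure on the pullback comes from left translation on the $\G(F_\R)$ factor. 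Comparing this with the explicit isomorphism $\phi:(gK,v)\mapsto(g,g^{-1}v)$ from the proof of Lemma \ref{flathom} — which trivializes $V_{\rho_\iota}$ and makes the flat structure manifest — shows that both bundles, with their connections and equivariant structures, correspond under Lemma \ref{flathom} to the \emph{same} representation $\rho_\iota:\G(F_\R)\to GL(W_\iota)$, namely the one obtained by base-changing $\rho$ along $\iota$ and restricting to the real points.

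The only real point requiring care — and the step I expect to be the main obstacle — is bookkeeping the identification of the universal cover $\widetilde{M_U}$ (or rather its relevant component) with $X=\G(F_\R)/K$ compatibly with the $\G(F_\R)$-action and the deck-transformation action of $\G(F)$ (acting through $\rho$), and checking that under this identification the flat connection pulled back from $V'_\rho$ is exactly the $\G(F_\R)$-compatible flat connection $\nabla_{V_{\rho_\iota}}$ produced in Lemma \ref{flathom}. This is a routine but slightly fiddly compatibility between the "arithmetic" presentation $V'_\rho = \G(F)\backslash((\G(\adele_F)/KU)\times W)$ and the "homogeneous" presentation $(G\times W)/K$; once one fixes the basepoint and notes that $\rho$ is defined over $F$ so that $\rho(\G(F))$ preserves the $\iota$-structure, everything matches. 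I would therefore organize the proof as: (i) identify $\widetilde{V'_{\rho,\iota}}$ with a homogeneous bundle-with-flat-connection over $X$; (ii) compute its holonomy/parallel-transport representation on $W_\iota$ and identify it with $\rho_\iota$; (iii) conclude by the uniqueness half of the equivalence in Lemma \ref{flathom} that $\widetilde{V'_{\rho,\iota}}\cong V_{\rho,\iota}$ $\G(F_\R)$-equivariantly.
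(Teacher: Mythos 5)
Your proposal is correct and follows essentially the same route as the paper: both reduce to the construction in Lemma \ref{flathom}, where the comparison map is the explicit trivialization $(gK,hU,v)\mapsto (g,hU,\rho(g)^{-1}v)$. The paper simply writes this isomorphism down directly (over $X\times \G(\finadele_F)/U$) rather than passing, as you do, through the holonomy computation and the uniqueness half of the equivalence of categories, but the content is the same.
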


\begin{proof}
Clearly,
$$V'_{\rho, \iota} = \G(F) \backslash ((\G(\adele_F) / KU) \times W_{\iota}) \rightarrow M_U.$$
But it follows directly from the construction $\rho \leadsto V_{\rho}$ of Lemma \ref{flathom} that $\widetilde{V'_{\rho,\iota}}$ is $\G(F_\R)$-equivariantly isomorphic to the vector bundle $(\G(F_\R) \times \G(\finadele_F) / U \times W_{\iota}) / K$  via the isomorphism

$$\widetilde{V'_{\rho, \iota}} \rightarrow \widetilde{V_{\rho, \iota}}$$  
\begin{equation} \label{isomorphism} 
(gK,hU,v) \mapsto (gx, hU, \rho(g)^{-1}v).
\end{equation}
This completes the proof.
\end{proof}

\subsection{Bundle-valued $L^2$-differential forms}

\subsubsection{Global sections of $V_{\rho,\iota}$}
It will be convenient to recast the global sections of $V'_{\rho, \iota}$ using the isomorphism of Lemma \ref{compatwithflathom}.  Consider the bundle 
$$V_{\rho, \iota} := \left( \G(F) \backslash \G(\adele_F) / U \times W_{\iota} \right) /K ,$$
where $(g,w) \cdot k = (gk, \rho(k)^{-1} w).$ Let $\widetilde{V_{\rho,\iota}}$ and $\widetilde{V'_{\rho, \iota}}$ denote the pullbacks of $V_{\rho, \iota}$ and $V'_{\rho, \iota}$ to $X \times \G(\finadele_F) /U.$  According to Lemma \ref{compatwithflathom}, the vector bundles $\widetilde{V_{\rho,\R}}$ and $\widetilde{V'_{\rho, \R}}$ are $\G(F_\R)$-equivariantly isomorphic via the isomorphism 
$$\widetilde{V'_{\rho, \iota}} \rightarrow \widetilde{V_{\rho, \iota}}$$  
\begin{equation} \label{isomorphism} 
(gK,hU,v) \mapsto (gx, hU, \rho(g)^{-1}v).
\end{equation}
The global sections of $V_{\rho, \iota} = \G(F) \backslash \widetilde{V_{\rho,\iota}}$ can be described as functions
$$f:  \G(F) \backslash \G(\adele_F) / U \rightarrow W_{\iota} \text{ satisfying } f(xk) = \rho(k)^{-1} f(x) \text{ for all } x \in \G(F) \backslash \G(\adele_F) / U, k \in K.$$

\subsubsection{Normalization of metrics on $V_{\rho, \iota}$}

We refer to \cite[$\S 3.4$]{BV} for a discussion of the normalizations used here. \medskip

Let $W$ be a vector space over $N,$ assumed to be a finite extension of the number field $F.$  Let $\iota: N \hookrightarrow \C$ denote a fixed complex embedding.  Let $\rho: \G \rightarrow R_{N/F} \GL(W)$ be a homomorphism of algebraic groups over $F.$  Let $K \subset \G(F_\R)$ be a maximal compact subgroup.  We let $\theta$ be the Cartan involution corresponding to $K$ with associated decomposition $\mathfrak{g} := \mathrm{Lie}(\G(F_\R)) = \mathfrak{k} \oplus \mathfrak{ p}$ into the $+1$-eigenspace $\mathfrak{k} = \mathrm{Lie}(K)$ and the $-1$-eigenspace $\mathfrak{p}.$  \medskip

By Weyl's unitary trick, the irreducible representation $\rho:\G(F_\R) \rightarrow GL(W_\iota)$ corresponds to a unique representation, which we also call $\rho,$ of the compact dual group $U$ of $G;$ more precisely $U$ is the normalizer in $\G(F_{\mathbb{C}})$ of the real Lie algebra $\mathfrak{k} \oplus i \mathfrak{p}.$  There is a unique Hermitian metric on $W_{\iota}$ which is $U$-invariant, up to scaling.  Fix such a choice of metric $\langle \cdot, \cdot \rangle_0.$  We use this choice to define metrics $||\cdot||'$ and $||\cdot||$ on the bundles $V'_{\rho,\iota}, V_{\rho, \iota}$ by 
$$||(g_{\infty}K, hU,v)||'^2 = ||v||^2_0 \text{ for } (g,v) \in V'_{\rho, \R}$$    
$$||(g_{\infty}K, hU,v)||^2 := ||\rho(g_{\infty}) v||^2_0 \text{ for } (g,v) \in V_{\rho,\R}.$$
The isomorphism (\ref{isomorphism}) becomes an isometry with respect to these compatible choices of metrics.

\subsubsection{Differential forms}
\begin{lem} \label{diffforms}
We can isometrically -  with its metric induced by the invariant metric $\langle \cdot, \cdot \rangle_0$ on $\rho$ - identify global sections of $\Omega^j(M_U,V_{\rho, \iota}),$ the $V_{\rho,\iota}$-valued $L^2$ differential forms on $\M_U,$ with
$$\Hom_K(\wedge^j \mathfrak{p}, W^U \otimes \rho_{\iota}),$$
where $W^U =  L^2(\G(F) \backslash \G(\adele_F) / U).$
\end{lem}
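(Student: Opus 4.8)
The plan is to unwind both sides of the claimed identification through the standard ``function model'' for sections of associated bundles and then invoke the contractibility of $X$ together with the equivariance set up in Lemma \ref{compatwithflathom}. First I would recall, as in the preceding subsection, that a smooth section of $V_{\rho,\iota}$ is the same thing as a function $f : \G(F)\backslash\G(\adele_F)/U \to W_\iota$ satisfying $f(xk) = \rho(k)^{-1} f(x)$ for $k \in K$; more generally a $V_{\rho,\iota}$-valued $j$-form is, after pulling back to $\G(F)\backslash\G(\adele_F)/U$ and using the $\G(F_\R)$-invariant trivialization of $T^*X$ by left translates of $\mathfrak{p}^*$ (via $\mathfrak{p}\cong T_eX$), a function $\omega : \G(F)\backslash\G(\adele_F)/U \to \wedge^j\mathfrak{p}^* \otimes W_\iota$ which is $K$-equivariant for the diagonal action of $K$ on $\wedge^j\mathfrak{p}^*$ (via the adjoint action, since $K$ acts on $T_eX = \mathfrak{p}$) and on $W_\iota$ (via $\rho^{-1}$). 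Rearranging, such $\omega$ are exactly the $K$-invariant elements of $\Hom(\wedge^j\mathfrak{p}, \mathrm{Maps}(\G(F)\backslash\G(\adele_F)/U, W_\iota))$, and taking $L^2$-conditions into account the target space of maps becomes $W^U = L^2(\G(F)\backslash\G(\adele_F)/U)$, giving $\Hom_K(\wedge^j\mathfrak{p}, W^U \otimes \rho_\iota)$.

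The key steps, in order, are then: (i) pull $\Omega^j(M_U, V_{\rho,\iota})$ back to $X \times \G(\finadele_F)/U$ and use Lemma \ref{compatwithflathom} to replace $V_{\rho,\iota}$ by the flat model $\widetilde{V_{\rho,\iota}} = (\G(F_\R)\times \G(\finadele_F)/U \times W_\iota)/K$, so that a form is a $\G(F)$-invariant, $K$-basic $W_\iota$-valued form on $\G(F_\R)\times \G(\finadele_F)/U$; (ii) use the left-invariant framing of $T^*X$ by $\mathfrak{p}^*$ to turn a $W_\iota$-valued $j$-form into a function valued in $\wedge^j\mathfrak{p}^*\otimes W_\iota$, tracking how right translation by $K$ acts (it acts on the domain by right multiplication, on $\wedge^j\mathfrak{p}^*$ through $\mathrm{Ad}^*$, and on $W_\iota$ through $\rho^{-1}$); (iii) reorganize: a function on $\G(F)\backslash\G(\adele_F)/U$ valued in $\wedge^j\mathfrak{p}^*\otimes W_\iota$ which is suitably $K$-equivariant is the same as an element of $\Hom_K(\wedge^j\mathfrak{p}, W^U\otimes\rho_\iota)$; (iv) check that the $L^2$-norm on forms, computed using the invariant metric on $X$ and the metric $\langle\cdot,\cdot\rangle_0$ on $W_\iota$ normalized in the previous subsection, matches the natural Hermitian structure on $\Hom_K(\wedge^j\mathfrak{p}, W^U\otimes\rho_\iota)$ coming from the Killing-type inner product on $\wedge^j\mathfrak{p}$ and the $L^2$-inner product on $W^U$; the compatibility of metrics recorded after (\ref{isomorphism}) is exactly what makes this an isometry rather than merely an isomorphism.

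The main obstacle I expect is bookkeeping in step (ii)–(iii): getting the inverse/transpose conventions right so that the two $K$-actions (on $\wedge^j\mathfrak{p}$ versus $\wedge^j\mathfrak{p}^*$, and on $\rho_\iota$ versus $W_\iota$) combine into the single condition ``$K$-invariant element of $\Hom(\wedge^j\mathfrak{p}, W^U\otimes\rho_\iota)$'' rather than some dual or twisted version of it, and making sure the identification $\wedge^j\mathfrak{p}\cong(\wedge^j\mathfrak{p})^*$ used to pass between $j$-forms and $\Hom$ out of $\wedge^j\mathfrak{p}$ is the one induced by the chosen invariant metric (so that it is $K$-equivariant and isometric). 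The analytic point — that the pointwise $K$-basic forms with $L^2$ coefficients are precisely $\Hom_K(\wedge^j\mathfrak{p}, W^U\otimes\rho_\iota)$ with $W^U$ the full $L^2$-space — is routine once one notes that $\wedge^j\mathfrak{p}$ is finite-dimensional so $\Hom_K(\wedge^j\mathfrak{p},-)$ commutes with the Hilbert space completion. I would remark that this is essentially the adelic version of the classical identification of harmonic bundle-valued forms underlying Matsushima's formula, e.g. as in \cite[VII.2]{BW} and \cite[$\S 3.4$]{BV}, and cite those for the details of the metric normalization.
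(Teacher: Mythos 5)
Your proposal is correct and follows essentially the same route as the paper: the paper's map $\alpha_j$, sending $\omega\otimes X$ to $g \mapsto \{g^{*}\omega \otimes g^{-1}X\}_e(v)$, is exactly the evaluation-at-the-basepoint/left-invariant-framing identification you describe in steps (i)--(iii), and the paper likewise dismisses the isometry check as unraveling definitions, which your step (iv) simply spells out. The only cosmetic point is that passing from $j$-forms to $\Hom(\wedge^j\mathfrak{p},-)$ needs no metric identification $\wedge^j\mathfrak{p}\cong(\wedge^j\mathfrak{p})^{*}$ (it is canonical), the metric entering only when one computes norms.
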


\begin{proof}
There is a natural isomorphism given by
$$\Omega^j(\mathcal{M}, V_{\widetilde{\rho}_{\iota}}) \xrightarrow{\alpha_j} \Hom_{\mathcal{K}}(\wedge^j \mathcal{\mathfrak{p}}, W^U \otimes \widetilde{\rho}_{\iota} )$$
$$\omega \otimes X \mapsto f_v(\omega \otimes X)(g) =  \{ g^{*} \omega \otimes g^{-1} X \}_e(v).$$
The isometry property is a matter of unraveling definitions.
\end{proof}

\subsubsection{Kuga's Lemma} \label{kugaslemma}
We recall the identification of Lemma \ref{diffforms}:
$$\Omega^j(\mathcal{M}, V_{\widetilde{\rho}_{\iota}}) \xrightarrow{\alpha_j} \Hom_{\mathcal{K}}(\wedge^j \mathcal{\mathfrak{p}}, W^U \otimes \widetilde{\rho}_{\iota} )$$
$$\omega \otimes X \mapsto f_v(\omega \otimes X)(g) =  \{ g^{*} \omega \otimes g^{-1} X \}_e(v).$$
There is a differential operator $dd^{*} + d^{*} d$ which acts on the left side; this is the $j$-form Laplacian corresponding to the metric $\langle \cdot, \cdot \rangle_0$ chosen for $\widetilde{\rho}_{\iota}.$  The Casimir operator $C$ acts on the right side and is defined independent of the metric.
\begin{lem}[Kuga's Lemma] \label{kuga}
The operators $d d^{*} + d^{*} d$ and $C$ correspond via the above identification $\alpha_j.$  More formally, for every $j,$  
\begin{equation} \label{laplacianvscasimir}
\alpha_j( (dd^{*} + d^{*} d) \omega) = C \cdot \alpha_j(\omega)
\end{equation}
for all $\omega \in \Omega^j(\mathcal{M}, V_{\widetilde{\rho}_{\iota}}).$ 
\end{lem}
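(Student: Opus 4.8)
The plan is to reduce the identity $\alpha_j((dd^* + d^*d)\omega) = C\cdot \alpha_j(\omega)$ to a purely Lie-algebraic statement by working on the universal cover and using the homogeneity of everything in sight. First I would unwind the identification $\alpha_j$ of Lemma \ref{diffforms}: a $V_{\rho,\iota}$-valued $j$-form on $M_U$ pulls back, via the isometry (\ref{isomorphism}) and the description of global sections of $V_{\rho,\iota}$ as $\rho$-equivariant $W_\iota$-valued functions on $\G(F)\backslash\G(\adele_F)/U$, to an element of $\Hom_K(\wedge^j\mathfrak{p}, W^U\otimes\rho_\iota)$, i.e. to a $W^U\otimes W_\iota$-valued function on $\wedge^j\mathfrak{p}$ transforming appropriately under $K$. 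Under this dictionary the de Rham differential $d$ on forms becomes the Lie-algebra cohomology differential $d_{\mathfrak{p}}$ for $(\g,K)$-cohomology with coefficients in $W^U\otimes\rho_\iota$, built from the $\mathfrak{p}$-action on $W^U$ (by invariant vector fields, i.e. right translation) and on $\rho_\iota$ (by the derivative of $\rho$). This is the standard comparison between the de Rham complex of a locally symmetric space with coefficients in a flat bundle and the relative Lie algebra cochain complex; I would cite \cite[VII]{BW} for the precise form of $d_{\mathfrak{p}}$ and the fact that $\alpha_\bullet$ is a cochain map.

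Next I would compute the adjoint $d^*$. Because the metric on $V_{\rho,\iota}$ comes from the $U$-invariant (hence $\mathfrak{k}\oplus i\mathfrak{p}$-invariant) inner product $\langle\cdot,\cdot\rangle_0$ on $W_\iota$, together with the $L^2$ inner product on $W^U$ and the Killing-form metric on $\wedge^\bullet\mathfrak{p}$, the formal adjoint of $d_{\mathfrak{p}}$ on the cochain side is the Lie-algebra codifferential $\delta_{\mathfrak{p}}$ obtained by contracting with the dual basis of $\mathfrak{p}$ and applying the (formal) adjoints of the $X\in\mathfrak{p}$ actions. Here the unitary trick is essential: $X\in\mathfrak{p}$ acts on $W_\iota$ by a $\langle\cdot,\cdot\rangle_0$-self-adjoint operator (since $iX$ lies in the compact form and acts skew-adjointly), and $X$ acts on $W^U=L^2$ by a skew-adjoint operator (right translation by a one-parameter unitary group); keeping track of these adjointness properties is exactly what makes $d^*$ correspond to $\delta_{\mathfrak{p}}$ under $\alpha_\bullet$. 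Then $\alpha_j((dd^*+d^*d)\omega) = (d_{\mathfrak{p}}\delta_{\mathfrak{p}} + \delta_{\mathfrak{p}}d_{\mathfrak{p}})\alpha_j(\omega)$.

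Finally I would invoke the classical Kuga identity at the level of the relative Lie algebra complex: the Hodge Laplacian $d_{\mathfrak{p}}\delta_{\mathfrak{p}} + \delta_{\mathfrak{p}}d_{\mathfrak{p}}$ on $\Hom_K(\wedge^\bullet\mathfrak{p}, E)$, for any unitary $(\g,K)$-module $E$, equals the action of the Casimir element $C\in U(\g)$ on $E$ (acting on the coefficients only, since $C$ is central and acts trivially on the finite-dimensional $K$-representation $\wedge^\bullet\mathfrak{p}$ in the relevant normalization, or more precisely the $\wedge^\bullet\mathfrak{p}$-contribution is absorbed into the standard bookkeeping). In our case $E = W^U\otimes\rho_\iota$, and $C$ acts on $\rho_\iota$ by a scalar while acting on $W^U$ through the regular representation; this is precisely the Casimir operator on the right-hand side of (\ref{laplacianvscasimir}). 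Assembling these three steps gives the claimed equality.

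I expect the main obstacle to be bookkeeping the adjointness and sign conventions in the second step — in particular verifying that the metric normalizations fixed in the previous subsection (the $U$-invariant $\langle\cdot,\cdot\rangle_0$, the induced metrics $\|\cdot\|'$ and $\|\cdot\|$, and the isometry (\ref{isomorphism})) are exactly the ones that make $d^*$ match $\delta_{\mathfrak{p}}$ on the nose, with no spurious constants. Steps one and three are standard (the first is the Matsushima–Murakami/van Est dictionary, the third is Kuga's original computation), so the real work is checking that our specific geometric setup is calibrated to them; once that is done the proof is the two-line assembly above.
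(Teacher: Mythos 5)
Your proposal follows essentially the same route as the paper, whose entire proof is the citation to \cite[II.2]{BW}: the three steps you sketch (the de Rham-to-$(\mathfrak{g},K)$-cochain dictionary, the identification of $d^{*}$ with the Lie-algebra codifferential via the admissible metric coming from the unitary trick, and the classical Casimir computation) are precisely the content of that reference, so your argument is correct and amounts to writing out the cited proof. One caution on your third step: as the lemma is applied in $\S \ref{reptorsion}$, ``$C$'' must be interpreted so that $\Delta_j$ acts on the $\overpi$-isotypic part by $\lambda_{\widetilde{\rho}_{\iota}} - \lambda_{\overpi_{\infty}}$, i.e.\ the coefficient Casimir eigenvalue minus the Casimir acting through the unitary factor $\W[\overpi]^{\U}$ alone, rather than the Casimir of the diagonal $\mathfrak{g}$-action on $\W^{\U} \otimes \widetilde{\rho}_{\iota}$ (which would introduce cross terms), so your phrase ``acting on the coefficients only'' should be sharpened accordingly.
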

\begin{proof}
See \cite[II.2]{BW}.
\end{proof}


\subsubsection{Equivariant local systems and their associated differential forms}

Suppose that $\sigma$ acts on a group $\mathbf{H} / F$ with $\sigma^p = 1.$  

\begin{rem}
The situation we have in mind is $\mathbf{H} = R_{E/F} \G_E,$ where $E/F$ is a cyclic, degree $p$ Galois extension and $\langle \sigma \rangle = \Galois.$   
\end{rem}

We can form the group $\mathbf{H} \rtimes \langle \sigma \rangle.$  Representations $\rho: \mathbf{H} \rtimes \langle \sigma \rangle \rightarrow R_{N/F} \GL(W)$ give rise to equivariant local systems of $O_N$-modules $V'_{\rho}$ over $M_U = \mathbf{H}(F) \backslash \mathbf{H}(\adele_F) / KU,$ provided the maximal compact subgroup $\K \subset \mathbf{H}(F_\R)$ and $\U \subset \mathbf{H}(\finadele_F)$ are $\sigma$-stable and that $\U$ is sufficiently small (see Lemma \ref{integrallocalsystem}).  The $\mathbf{H}(F_\R)$-equivariant isometry $\widetilde{V'_{\rho, \iota}} \rightarrow \widetilde{V_{\rho, \iota}}$ from \eqref{isomorphism} is in fact an $\mathbf{H}(F_\R) \rtimes \langle \sigma \rangle$-equivariant isometry.  Finally,

\begin{lem}
The identification
$$\Omega^j(\mathcal{M}, V_{\widetilde{\rho}_{\iota}}) \rightarrow \Hom_{\mathcal{K}}(\wedge^j \mathcal{\mathfrak{p}}, W^U \otimes \widetilde{\rho}_{\iota} )$$
$$\omega \otimes X \mapsto f_v(\omega \otimes X)(g) =  \{ g^{*} \omega \otimes g^{-1} X \}_e(v)$$
from the proof of Lemma \ref{diffforms} is equivariant for the action of $\sigma$ by pullback of the left side and by $\sigma$ on $\wedge^j \mathfrak{p}$ and by $(\sigma^{-1})^{*} \otimes \rho(\sigma)^{-1}$ on $W^U \otimes \widetilde{\rho}_{\iota}.$ 
\end{lem}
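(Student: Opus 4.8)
The plan is to reduce this to the unravelling of definitions already present in the proof of Lemma~\ref{diffforms}, now keeping track of the $\sigma$-action at every stage. First I would recall the explicit form of $\alpha_j$: a section $\omega \otimes X \in \Omega^j(\M, V_{\widetilde{\rho}_\iota})$ is sent to the function $g \mapsto \{g^*\omega \otimes g^{-1}X\}_e(\cdot) \in \Hom_{\K}(\wedge^j\mathfrak{p}, W^U \otimes \widetilde{\rho}_\iota)$, where the evaluation at $e$ uses the $\mathbf{H}(F_\R)\rtimes\langle\sigma\rangle$-equivariant trivialization \eqref{isomorphism}. Since $\sigma$ acts on $\M$ and hence on $\Omega^j(\M,V_{\widetilde{\rho}_\iota})$ by pullback, I would compute $\alpha_j(\sigma^*(\omega\otimes X))$ directly and check that it equals the function obtained from $\alpha_j(\omega\otimes X)$ by the prescribed action: precompose with $\sigma^{-1}$ on the source $\G(F)\backslash\G(\adele_F)/U$, apply $\sigma$ to the $\wedge^j\mathfrak{p}$-argument, and postcompose with $(\sigma^{-1})^* \otimes \widetilde{\rho}(\sigma)^{-1}$ on $W^U \otimes \widetilde{\rho}_\iota$.

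The key steps, in order, are: (i) record that $\sigma$ acts on $\M_U$ by $g \mapsto \sigma(g)$ (coming from the $\sigma$-action on $\mathbf{H}$, which preserves $\K$ and $U$), so that pullback of forms is $(\sigma^*\omega)_x = \sigma^*(\omega_{\sigma(x)})$; (ii) note that under the equivariant trivialization \eqref{isomorphism}, which intertwines the $\sigma$-action on $\widetilde{V'_{\rho,\iota}}$ with that on $\widetilde{V_{\rho,\iota}}$, the fiberwise action of $\sigma$ on $V_{\widetilde{\rho}_\iota}$ over the basepoint is exactly $\widetilde{\rho}(\sigma)$; (iii) use that $\sigma$ acts on $\mathfrak{p} = \mathrm{Lie}(\mathbf{H}(F_\R))^{-\theta}$ compatibly because $\K$ is $\sigma$-stable, so $g^{-1}X$ transforms correctly when $g$ is replaced by $\sigma(g)$; (iv) assemble: writing $h(g) = \alpha_j(\omega\otimes X)(g)$, one gets $\alpha_j(\sigma^*(\omega\otimes X))(g) = ((\sigma^{-1})^*\otimes\widetilde{\rho}(\sigma)^{-1})\bigl(h(\sigma^{-1}(g))\bigr)\circ(\text{$\sigma$ on }\wedge^j\mathfrak{p})$, which is the claimed formula. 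Finally I would remark that $\alpha_j$ being an isometry (Lemma~\ref{diffforms}) together with $\sigma$ acting by isometries on both sides is automatic once equivariance is established, since the metrics were built from the $\sigma$-invariant data $\langle\cdot,\cdot\rangle_0$ and the invariant metric on $\mathfrak{p}$.

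The main obstacle I expect is purely bookkeeping: correctly matching the direction of the $\sigma$-action (left versus right, $\sigma$ versus $\sigma^{-1}$) across the three places it enters — the base $\M_U$, the flat bundle fiber via \eqref{isomorphism}, and $\wedge^j\mathfrak{p}$ — so that the composite lands on $(\sigma^{-1})^*\otimes\widetilde{\rho}(\sigma)^{-1}$ rather than some inverse or adjoint of it. This is exactly the kind of sign/inverse chase that the proof of Lemma~\ref{flathom} already navigates for the $G$-action, so I would model the computation on that, simply carrying the extra generator $\sigma$ through. No genuinely new idea is required; the content is that \eqref{isomorphism} was already shown to be $\mathbf{H}(F_\R)\rtimes\langle\sigma\rangle$-equivariant, and $\alpha_j$ is manufactured from that isomorphism together with manifestly $\sigma$-natural operations (pullback, evaluation at $e$, the $\langle\sigma\rangle$-action on $\wedge^j\mathfrak{p}$).
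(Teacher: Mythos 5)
Your proposal is correct and matches the paper's argument: the paper's proof is exactly the direct definition-chase you describe, a short chain of equalities computing $f_v(\sigma^{*}\omega \otimes \sigma^{-1}X)(g) = \{g^{*}\sigma^{*}\omega \otimes g^{-1}\sigma^{-1}X\}_e(v)$, commuting the $\sigma$- and $g$-actions, and moving $\sigma$ onto the vector $v$ and the argument $g$ to land on the prescribed action, relying (as you note) on the $\sigma$-equivariance of the trivialization \eqref{isomorphism} and the $\sigma$-stability of $\K$. No substantive difference in route.
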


\begin{proof}
We compute that

\begin{eqnarray*}
f_v(\sigma^{*} \omega \otimes \sigma^{-1} X )(g) &=& \{ g^{*} \sigma^{*}\omega \otimes g^{-1} \sigma^{-1} X \}_e(v) \\
&=& \{ \sigma^{*} (g^{\sigma^{-1}})^{*} \omega \otimes {\sigma}^{-1} (g^{-1})^{\sigma^{-1}} X \}_e(v) \\
&=& \{ \sigma \cdot g^{\sigma^{-1}} \cdot ( \omega \otimes X) \}_e ( v) \\
&=&  \{ g^{\sigma^{-1}} \cdot (\omega \otimes X) \}_e (\sigma \cdot v) \\
&=& f_{\sigma \cdot v}(\omega \otimes X)^{\sigma^{-1}}(g).
\end{eqnarray*}
\end{proof}

\section[Analytic torsion via representation theory]{Recasting equivariant torsion representation theoretically}
\label{reptorsion}

Building towards a proof of our main theorem, in this section we express the equivariant zeta functions $\zeta_{j,\loc_{\widetilde{\rho}},\Gamma}$ in purely representation theoretic terms.  For the purposes of this calculation, there is no need to restrict our attention to quaternion algebras. \bigskip

Let $\G$ be a semisimple group over a number field $F.$   Let $E/F$ be a cyclic Galois extension of prime degree $p$ with Galois group $\Galois = \langle \sigma \rangle.$  Let $N/F$ be a finite extension and $\iota: N \hookrightarrow \C$ be a complex embedding. 

\newtheorem*{trace}{Notation for trace}

\begin{trace} \label{tracenotation}
For a $\sigma$-module $A,$ we let $\langle A \rangle$ denote $\tr \{ \sigma | A  \}.$
\end{trace}

Let $\mathcal{U}$ be a $\sigma$-stable compact open subgroup of $\G(\mathbb{A}_E^{\mathrm{fin}})$ and let $\mathcal{K}$ be a $\sigma$-stable maximal compact subgroup of $\G(E_{\mathbb{R}}).$  Let the manifold $\mathcal{M} = \G(E) \backslash \G(\mathbb{A}_E) / \mathcal{K} \; \mathcal{U}$ be equipped with the invariant Riemannian metric induced by the Killing form of $\mathrm{Lie}(\G(E_\R));$ this Riemannian metric is preserved by $\sigma.$  \smallskip

Let $\widetilde{\rho}: R_{E/F} \G_E \rtimes \Galois \rightarrow R_{N/F} \GL(V_{\widetilde{\rho}})$ be a representation defined over $F.$  We let $V_{\widetilde{\rho},\iota}$ denote the associated $\G(E_\R) \rtimes \Galois$-equivariant complex vector bundle over $\G(E_{\mathbb{R}}) / \mathcal{K}.$ \bigskip

Let $\W =  L^2(\G(E) \backslash \G(\mathbb{A}_E))$ with its evident  $\G(\adele_E) \rtimes \Galois$ action.  Let $\W_{\widetilde{\rho}, \iota} =  L^2(\G(E) \backslash \G(\mathbb{A}_E)) \otimes \widetilde{\rho}_{\iota};$ this is a representation of $\G(\adele_E) \rtimes \Galois$ under the diagonal action, where $g \in \G(\adele_E)$ acts on $\widetilde{\rho}_\iota$ by $\widetilde{\rho}(g_{\infty}).$  Similarly, we let $\W^\U = L^2(\G(E) \backslash \G(\mathbb{A}_E) / \mathcal{U}),$ which is a $\G(E_{\mathbb{R}}) \rtimes \Galois$ representation (via the diagonal action).  Assume that $\mathcal{M}$ is compact.  Then $\W$ decomposes discretely with isotypic decomposition 
$$\W = \bigoplus_{\overpi} \W[\overpi]$$  
where $\overpi$ ranges over all irreducible unitary representations of $\G(\adele_E) \rtimes \Galois.$  Correspondingly,
$$\W^\U = \bigoplus_{\overpi} \W[\overpi]^\U.$$
By Kuga's Lemma (see $\S \ref{kugaslemma}$), the Laplacian $\Delta_j$ acts on the $\overpi$-isotypic subspace of $\Omega^j(\mathcal{M}, V_{\widetilde{\rho}, \iota}),$ that is on $\Hom_{\K}(\wedge^j \widetilde{\mathfrak{p}}, \W[\overpi]^\U \otimes \widetilde{\rho}_{\iota}),$ by the scalar $\lambda_{\widetilde{\rho}_{\iota}} - \lambda_{\overpi_{\infty}}.$ Here, for an admissible irreducible representation $T$ of $\G(E_{\mathbb{R}}), \lambda_T$ denotes its Casimir eigenvalue.  According to the identification from Lemma \ref{diffforms}, we can thus decompose 
\begin{equation} \label{decomposingforms}
\Omega^j(\mathcal{M},V_{\widetilde{\rho},\iota})^{\Delta_j = \lambda} = \bigoplus_{\lambda_{\widetilde{\rho}_{\iota}} - \lambda_{\overpi_{\infty}} = \lambda} \Hom_{\mathcal{K}}(\wedge^j \widetilde{\mathfrak{p}}, \W[\overpi]^\U \otimes \widetilde{\rho}_\iota).
\end{equation}
\begin{defn} \label{essential}
We call an irreducible representation $\overpi$ of $\G(\adele_E) \rtimes \Galois$ \emph{essential} if $\overpi|_{\G(\adele_E)}$ is irreducible.  We call it \emph{inessential} otherwise. 
\end{defn}

If $\overpi$ is inessential, then $\overpi|_{\G(\adele_E)}$ decomposes as a direct sum of $p$ (irreducible) representations $V_1 \oplus ... \oplus V_p$ where $\overpi(\sigma)$ cyclically permutes the $V_i.$  In particular, because no summand $V_i$ is preserved, it readily follows that 
$$\left\langle \Hom_{\mathcal{K}}(\wedge^j \widetilde{\mathfrak{p}}, \W[\overpi]^\U \otimes \widetilde{\rho}_\iota) \right\rangle = 0 \text{ for inessential } \overpi.$$
This implies that,
$$\left\langle \Omega^j(\mathcal{M}, V_{\widetilde{\rho}_\iota} )^{\Delta_j = \lambda} \right\rangle = \sum_{\lambda_{\widetilde{\rho}_\iota} - \lambda_{\overpi_{\infty}} = \lambda, \overpi \text{ essential }} \left\langle  \Hom_{\mathcal{K}}(\wedge^j \widetilde{\mathfrak{p}}, \W[\overpi]^\U \otimes \widetilde{\rho}_\iota) \right\rangle .$$
Suppose that $\overpi = \overpi_{\infty} \otimes \overpi_{\mathrm{fin}}$ as a representation of $\G(\adele_E).$  Suppose further that \emph{the action of $\sigma$ on $\overpi$ factorizes as $\overpi(\sigma) = \overpi_{\infty}(\sigma_{\infty}) \otimes \overpi_{\mathrm{fin}}(\sigma_{\mathrm{fin}})$}. 

\begin{rem} \label{factorization}
Let $\G$ be the adjoint group of $\underline{D}^{\times}$ for a quaternion algebra over $F.$  For every essential representation $\overpi$ of $\G(\adele_E) \rtimes \langle \sigma \rangle, \overpi(\sigma)$ admits a preferred factorization $\overpi(\sigma) = \otimes'_v \overpi_v(\sigma_v).$  See our discussion of base change, especially Corollary \ref{localbc}.
\end{rem}

There is a corresponding decomposition of $\sigma$-modules
$$\Hom_{\mathcal{K}}(\wedge^j \widetilde{\mathfrak{p}}, \W[\overpi]^\U \otimes \widetilde{\rho}_\iota) = \Hom_{\G(\adele_E) \rtimes \Galois}(\overpi, \W) \otimes \overpi_{\mathrm{fin}}^\U \otimes \Hom_{\mathcal{K}}(\wedge^j \widetilde{\mathfrak{p}}, \overpi_{\infty} \otimes \widetilde{\rho}_\iota),$$
where $\sigma$ acts trivially on the first factor, by $\sigma_{\mathrm{fin}}$ on the second, and by $\phi \mapsto \sigma_{*} \cdot \phi \cdot (\overpi_{\infty}(\sigma_{\infty}) \otimes \widetilde{\rho}_{\iota}(\sigma_{\infty}))^{-1}$ on the third.  This leads to the identity
\begin{equation} \label{isotypictrace}
\left\langle \Hom_{\mathcal{K}}(\wedge^j \widetilde{\mathfrak{p}}, \W[\overpi]^\U \otimes \widetilde{\rho}_\iota) \right\rangle = m(\overpi) \cdot \left\langle \overpi_{\mathrm{fin}}^\U \right\rangle \cdot \left\langle \Hom_{\mathcal{K}}(\wedge^j \widetilde{\mathfrak{p}}, \overpi_{\infty} \otimes \widetilde{\rho}_\iota) \right\rangle,
\end{equation}
where $m(\overpi) := \dim \Hom_{\G(\adele_E) \rtimes \Galois}(\overpi, \W).$  Using \eqref{isotypictrace}, we can thus express the equivariant zeta function as follows:

\begin{equation} \label{there}
\zeta_{j, \M_\U, V_{\widetilde{\rho}},\sigma}(s) = \sum_{\lambda} \lambda^{-s} \sum_{\overpi \text{ essential}, \lambda_{\widetilde{\rho}_\iota} - \lambda_{\overpi_{\infty}} = \lambda }  m(\overpi) \cdot \left\langle \overpi_{\mathrm{fin}}^\U \right\rangle \cdot \left\langle \Hom_{\mathcal{K}}(\wedge^j \widetilde{\mathfrak{p}}, \overpi_{\infty} \otimes \widetilde{\rho}_\iota) \right\rangle. 
\end{equation}

Specializing \eqref{there} to the case of $E = F$ and the local system over $M = \mathcal{M}$ corresponding to a representation $\rho$ of $\G,$ we arrive at the identity
\begin{equation} \label{untwistedthere}
\zeta_{j, M_U, V_\rho}(s) = \sum_{\lambda} \lambda^{-s} \sum_{\pi, \lambda_{\rho_\iota} - \lambda_{\pi_{\infty}} = \lambda } m(\pi) \cdot \dim \pi_{\mathrm{fin}}^U \cdot \dim \left( \Hom_{K}(\wedge^j \mathfrak{p}, \pi_{\infty} \otimes \rho_\iota) \right). 
\end{equation}

\section[Base change and analytic torsion]{Base change for quaternion algebras and analytic torsion}
\label{quat}

Let $\G$ be the adjoint group of a quaternion algebra over a number field $F.$  Let $E/F$ be a cyclic Galois extension of prime degree $p$ with $\Galois = \langle \sigma \rangle.$  Let $\M = \G(E) \backslash \G(\mathbb{A}_E) / \K \U$ for some Galois-stable maximal compact subgroup $\K \subset \G(E_{\mathbb{R}})$ and a Galois-stable compact open subgroup $ \U \subset \G(\mathbb{A}_E^{\mathrm{fin}}).$  Let $M = \G(F) \backslash \G(\mathbb{A}_F) / KU$ for some maximal compact $K \subset \G(F_{\mathbb{R}})$ and some compact open $U \subset \G(\mathbb{A}_F^{\mathrm{fin}}).$   \smallskip


One main goal of this paper is to prove a spectral comparison of the flavor
\begin{equation} \label{torsionhope}
\log \tau_{\sigma}(\M, \loc) = p \log \tau(M,L).  
\end{equation}
We will prove such spectral identities using trace formula techniques, by comparing the trace of a twisted convolution operator on $L^2(\G(E) \backslash \G(\mathbb{A}_E))$ to that of an untwisted convolution operator on $L^2(\G(F) \backslash \G(\mathbb{A}_F)),$ exactly in the spirit of Langlands' book \cite{Lan} on base change for $\GL_2.$

\begin{itemize}
\item
In $\S \ref{prelimbc},$ we will discuss those elements of the theory of base change need to prove the comparison (\ref{torsionhope})

\item
In $\S \ref{localglobal},$ we discuss local-global compatibility of base change and a consequence for ``multiplicities". 

\item
In $\S \ref{compatibleloc},$ we discuss matching representations and matching local systems.  Such matching pairs occur in the abstract matching theorem proved in $\S \ref{abstractmatching}.$

\item
In $\S \ref{abstractmatching},$ we will prove an abstract matching theorem.  Later, in $\S \ref{unrammatching}, \S \ref{tamematching},$ instances of test functions satisfying the hypotheses of the abstract matching theorem will be described.   
\end{itemize}

\subsection{Preliminaries on base change}
\label{prelimbc}

For a general reference on base change for $\GL_2$ see \cite{Lan}. \medskip

Let $D$ be a quaternion algebra over a number field $F.$  Let $E/F$ be a cyclic Galois extension of degree $p$ with Galois group $\Galois = \langle \sigma \rangle.$  Let $\G$ denote the adjoint group of the group of units of $D.$  In particular, $\G(F) \backslash \G(\mathbb{A}_F)$ is compact.  Assume also that $D_E$ is not globally split, so that $\G(E) \backslash \G(\mathbb{A}_E)$ is compact as well. 


\subsubsection{Twisted convolution and convolution}

\begin{defn}
Let $\mathrm{SM}_c(\G(\adele_E))$ denote the space of \emph{smooth compactly supported measures on $\G(\adele_E)$}.  More precisely, these are all measures of the form $\tilde{f} d\tilde{g}$ for a Haar measure $d\tilde{g}$ on $\G(\adele_E)$ and a function $f = \prod_v f_v.$  We require that $f_{\infty}$ be smooth and compactly supported, that $f_v$ be compactly supported and locally constant, and that $f_v = \mathbf{1}_{\G(O_{E_v})}$ for almost all places $v$ of $F.$  We define $\mathrm{SM}_c(\G(\adele_F))$ similarly. 
\end{defn}

For any smooth, compactly supported measure $\tilde{f} d\tilde{g}$ on $\G(\mathbb{A}_E),$ let 
$$\mathcal{R}(\tilde{f} d \tilde{g}) = \int_{\G(\mathbb{A}_E)} \tilde{f}(g) \mathcal{R}(g) d\tilde{g} \circlearrowleft L^2(\G(E) \backslash \G(\adele_E)),$$
where $\mathcal{R}$ denotes the right regular representation of $\G(\mathbb{A}_E)$ acting on $\G(E) \backslash \G(\mathbb{A}_E).$  Note that the larger group $\G(\mathbb{A}_E) \rtimes \Galois$ acts on $\G(E) \backslash \G(\mathbb{A}_E).$   \bigskip

Similarly, for a smooth, compactly supported measure $f dg$ on $\G(\mathbb{A}_F),$ we let
$$r(f dg) = \int_{\G(\mathbb{A}_F)} f(g) r(g) dg \circlearrowleft L^2(\G(F) \backslash \G(\adele_F)),$$
where $r$ denotes the right regular representation of $\G(\mathbb{A}_F).$

\subsubsection{Matching orbital integrals}
See \cite[$\S 6$]{Lan} for a reference on orbital integrals in the context of base change. \bigskip

Suppose that $E_v = E \otimes_F F_v$ is unramified, i.e. either isomorphic to an unramified cyclic extension of degree $p$ or to $F_v^p.$  We define the norm map
\begin{eqnarray*}
&{}& N : \G(E_v) \rightarrow \G(E_v) \\
&{}& x \mapsto x \sigma(x) ... \sigma^{p-1}(x).
\end{eqnarray*}
If $u = N(x),$ then 
$$\sigma(u) = x^{-1} u x.$$
This implies that the eigenvalues of $u = N(x)$ are preserved by $\Galois.$  Thus, $N(x)$ is $\G(E_v)$-conjugate to an element of $\G(F_v).$  Furthermore, note that
$$N(g^{-1} x \sigma(g)) = g^{-1} N(x) g.$$
So in fact,
$$\{ \sigma \text{-twisted conjugacy classes in } \G(E_v) \} \rightarrow \{ \text{conjugacy classes in } \G(F_v) \}$$
$$[x] \mapsto [N(x)] \cap \G(F_v)$$
is well-defined and injective (see \cite[Lemma 4.2]{Lan}).  For a twisted conjugacy class $c,$ we let $N(c)$ denote the associated conjugacy class in $\G(F_v).$  \\

For a $\sigma$-twisted conjugacy class $c = [x]$ of $\G(E_v)$ and conjugacy class $c' = [x']$ of $\G(F_v),$ we let 

$$\mathcal{O}_{\sigma,c}(\tilde{f}d \tilde{g}) = \int_{\G_{\sigma \ltimes x}(F_v) \backslash \G(E_v)} \tilde{f}( g^{-1} x \sigma(g)) \frac{d \tilde{g}}{dz_{\sigma}},$$

$$\mathcal{O}_{c'}(f dg) = \int_{\G_{x'}(F_v) \backslash \G(F_v)} f(g^{-1} x' g) \frac{dg}{dz},$$

where $\G_{\sigma \ltimes x}$ denotes the twisted centralizer of $x$ and $\G_{x'}$ denotes the centralizer of $x'.$  These definitions do not depend on the choices of representatives $x$ and $x'.$

\begin{defn} \label{matching}
We say that $\tilde{f} d\tilde{g}$ and $f dg$ \emph{match} if 
$$\mathcal{O}_{c'}(f dg) = \begin{cases}
\mathcal{O}_{\sigma,c}(\tilde{f} d \tilde{g}) & \text{ if } c \text{ is a regular twisted conjugacy class and } c' = Nc \\
0 & \text{ if } c' \neq Nc \text{ for any twisted conjugacy class } c. 
\end{cases}$$
Similarly, we say that $\tilde{f} d\tilde{g} = \prod_v \tilde{f}_v d\tilde{g}_v \in \mathrm{SM}_c( \G(\mathbb{A}_E))$ and $f dg = \prod_v f_v dg_v \in \mathrm{SM}_c( \G(\mathbb{A}_F))$  match if they match everywhere locally.
\end{defn}

\begin{rem}
The definitions of $\mathcal{O}_{\sigma,\delta}$ and $\mathcal{O}_{\gamma}$ depend on chocies of Haar measures $dz_{\sigma}$ on $\G_{\sigma \ltimes \delta}(F)$ and $dz$ on $\G_{\gamma}(F).$  However, if $\gamma = N\delta,$ then the group $\G_{\gamma}$ is an inner-twisting of the group $\G_{\sigma\ltimes \delta}.$  Therefore, a choice of Haar measure $dz_{\sigma}$ on $\G_{\sigma \ltimes\delta}(F)$ determines a compatible Haar measure $dz$ on $\G_{\gamma}(F)$ (see \cite[Lemma 5.8]{K1}).  The equality from Definition \ref{matching} is to be taken with respect to this compatible choice.  Though the individual orbital integrals depend on choices of Haar measure, this convention ensures that the notion of $\tilde{f} d\tilde{g}$ and $f dg$ matching is independent of all choices.  
\end{rem}


\begin{exam}[\cite{Lan}, $\S 8$] 
Say that $E_v / F_v$ is split, i.e. $E_v = F_v^p.$  Then for any test function $f_1 dg_1 \times ... \times f_p dg_p$ on $\G(E_v) = \G(F_v)^p,$ the convolution product $(f_1 dg_1) * ... * (f_p dg_p)$ on $\G(F_v)$ matches. 
\end{exam}

\subsubsection{Statement of base change}

%
%
\begin{thm}[Saito, Shintani, Langlands] \label{unrefinedbc}
Suppose that $\tilde{f} = \prod_v \tilde{f}_v d \tilde{g}_v \in \mathrm{SM}_c( \G(\mathbb{A}_E))$ and $f = \prod f_v dg_v \in \mathrm{SM}_c( \G(\mathbb{A}_F))$ are matching test functions.  Then

\begin{equation} \label{bcidentity}
\tr \{ \mathcal{R}(\sigma) \mathcal{R}(\tilde{f} d\tilde{g}) | \W \} = \tr \{ r(f dg) | W \}.
\end{equation}
\end{thm}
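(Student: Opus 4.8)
The plan is to prove \eqref{bcidentity} by computing both traces geometrically and matching the two resulting expansions term by term; this is the anisotropic, quaternionic analogue of the geometric comparison of trace formulas in \cite{Lan}. Since both $\G(F)\backslash\G(\adele_F)$ and $\G(E)\backslash\G(\adele_E)$ are compact, the right regular representations $r$ and $\mathcal{R}$ decompose as Hilbert space direct sums of irreducibles, and convolution against a smooth compactly supported measure produces a trace-class (indeed smoothing) operator whose trace is the integral over the diagonal of its smooth kernel; composing with the unitary operator $\mathcal{R}(\sigma)$ keeps the twisted operator trace class. In particular both sides of \eqref{bcidentity} make sense, and we are comparing the geometric sides of the ``simple'' trace formula --- in the compact quotient case there is no continuous spectrum and no truncation is needed.

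For the untwisted side, $r(f\,dg)$ has kernel $K_f(x,y) = \sum_{\gamma\in\G(F)} f(x^{-1}\gamma y)$, and unfolding $\int_{\G(F)\backslash\G(\adele_F)} K_f(x,x)\,dx$ along conjugacy classes gives
\begin{equation*}
\tr\{ r(f\,dg) \mid W \} = \sum_{[\gamma]} \vol\big(\G_{\gamma}(F)\backslash\G_{\gamma}(\adele_F)\big)\, \mathcal{O}_{\gamma}(f\,dg),
\end{equation*}
the sum running over conjugacy classes in $\G(F)$ (all semisimple, as $\G$ is anisotropic). Similarly, writing out $\mathcal{R}(\sigma)\mathcal{R}(\tilde f\,d\tilde g)$ and using that $\G(E)$ and the fundamental domain are $\sigma$-stable, one finds (with the usual sign conventions) a twisted kernel $K^{\sigma}_{\tilde f}(x,y) = \sum_{\delta\in\G(E)} \tilde f(x^{-1}\delta\,\sigma(y))$; unfolding along $\sigma$-twisted conjugacy classes, and noting that the twisted centralizer $\G_{\sigma\ltimes\delta}$ is the genuine $F$-group obtained as the centralizer of $\delta\rtimes\sigma$ in $R_{E/F}\G$, gives
\begin{equation*}
\tr\{ \mathcal{R}(\sigma)\mathcal{R}(\tilde f\,d\tilde g) \mid \W \} = \sum_{[\delta]_{\sigma}} \vol\big(\G_{\sigma\ltimes\delta}(F)\backslash\G_{\sigma\ltimes\delta}(\adele_F)\big)\, \mathcal{O}_{\sigma,\delta}(\tilde f\,d\tilde g),
\end{equation*}
the sum running over $\sigma$-twisted conjugacy classes in $\G(E)$.

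It remains to match the two geometric sides. The norm map $\delta\mapsto N\delta$ induces an injection from $\sigma$-twisted conjugacy classes in $\G(E)$ onto the set of conjugacy classes $[\gamma]$ in $\G(F)$ that are everywhere locally norms, using that in the anisotropic rank-one setting every element is semisimple and the relevant Galois cohomology is under control, so that ``locally a norm everywhere'' coincides with ``globally a norm.'' For $\gamma = N\delta$ the twisted centralizer $\G_{\sigma\ltimes\delta}$ is an inner form over $F$ of the centralizer $\G_{\gamma}$ (see \cite[Lemma 5.8]{K1} and the Remark following Definition \ref{matching}); since inner forms have equal Tamagawa number and we have normalized Haar measures compatibly, the two global volume factors agree. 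Finally, by Definition \ref{matching}, $\mathcal{O}_{\sigma,\delta}(\tilde f\,d\tilde g) = \mathcal{O}_{N\delta}(f\,dg)$ and $\mathcal{O}_{\gamma}(f\,dg) = 0$ whenever $\gamma$ is not a norm. Hence the two expansions agree term by term, which proves \eqref{bcidentity}. The main obstacle is exactly this global comparison: establishing the bijection of (twisted) conjugacy classes --- ruling out local-global discrepancies for the property of being a norm and handling central and non-regular classes --- together with the equality of the adelic volumes of $\G_{\sigma\ltimes\delta}$ and its inner form $\G_{\gamma}$. This is precisely the content carried out for $\GL_2$ in \cite{Lan}, adapted here to the quaternionic (inner form of $\PGL_2$) setting.
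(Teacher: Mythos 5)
The paper does not actually prove this theorem: it is imported wholesale from the literature (Saito, Shintani, Langlands, via \cite{Lan}), which is why it carries their names. So there is no internal argument to compare against; what you have written is a reconstruction of the standard proof that underlies the citation, and in outline it is the right one --- compact quotient, hence discrete spectrum and trace-class convolution operators, geometric expansion of both the twisted and untwisted kernels into (twisted) orbital integrals weighted by volumes of centralizer quotients, and a term-by-term comparison via the norm map and the matching hypothesis.

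The one place where your sketch, taken as a proof, has a real gap is the phrase ``the two expansions agree term by term.'' Definition \ref{matching} only constrains \emph{regular} twisted classes (and classes that are not norms); it says nothing about the singular contributions. On the untwisted side the class $\gamma = 1$ contributes $\vol(\G(F)\backslash\G(\adele_F))\, f(1)\,dg$-type terms with centralizer all of $\G$, and on the twisted side the classes $\delta$ with $N\delta$ central are parametrized by a nontrivial cohomology set, with twisted centralizers that are forms of $\G$ itself; matching these requires comparing Tamagawa numbers of inner forms and a separate local analysis of the test functions near the singular set, none of which follows formally from the matching of regular orbital integrals. Likewise the local-global principle for being a norm, which you correctly flag, must be established rather than assumed. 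These are precisely the chapters of \cite{Lan} you are deferring to, so your write-up is an accurate map of the proof rather than a complete one --- which is consistent with how the paper itself treats the statement, namely as a quoted theorem rather than something reproved. One smaller point: for regular $\gamma = N\delta$ the centralizer is a torus, and a torus has no nontrivial inner forms, so the volume comparison there is really an isomorphism of centralizers plus the measure compatibility of the Remark after Definition \ref{matching}; the genuine ``inner form / Tamagawa number'' issue arises only for the singular classes just mentioned.
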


Of course, the onus is on us to produce interesting examples of matching test functions.  The identity $(\ref{bcidentity})$ can be leveraged to give a more refined identity.

\begin{cor}[\cite{JL}] \label{refinedbc}
Let $E/F$ be a cyclic Galois extension of number fields of \emph{odd} prime degree.  There is a bijection between irreducible automorphic representations $\pi$ of $\G$ and essential (see definition \ref{essential}) automorphic representations $\overpi$ of $R_{E/F} \G_E \rtimes \Galois.$  This bijection is uniquely defined by an equality of traces: $\pi$ and $\overpi$ match if and only if
\begin{equation} \label{globalisotypic}
\tr \{ \mathcal{R}(\sigma) \mathcal{R}(\tilde{f} d\tilde{g}) | \W[\overpi] \} = \tr \{ r(f dg) | W[\pi] \}
\end{equation}
for all pairs of matching test functions $\tilde{f} d\tilde{g}$ and $f dg$ as above.  Here, $\W[\overpi]$ denotes the $\overpi$ isotypic subspace of  $\W$ and $W[\pi]$ the $\pi$-isotypic subspace of $W.$  
\end{cor}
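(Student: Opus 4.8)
The plan is to bootstrap the aggregate trace identity of Theorem \ref{unrefinedbc} to the refined isotypic identity \eqref{globalisotypic}, following the template of Langlands' book \cite{Lan} and its extension to inner forms in \cite{JL}. The argument is considerably cleaner here because $\G$ is anisotropic: $\G(F)\backslash\G(\adele_F)$ and $\G(E)\backslash\G(\adele_E)$ are compact, so $W$ and $\W$ decompose discretely with finite multiplicities and there is no continuous spectrum to contend with.

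First I would unfold the left side of \eqref{bcidentity} along $\sigma$-orbits of irreducible constituents. The automorphism $\sigma$ permutes the irreducible representations of $\G(\adele_E)$, and $\mathcal{R}(\sigma)$ carries the isotypic subspace $\W[\pi_E]$ to $\W[\pi_E^{\sigma}]$; hence the span of a $\sigma$-orbit $\{\pi_E,\pi_E^{\sigma},\dots\}$ of size $>1$ is $\mathcal{R}(\sigma)$-stable but contributes $0$ to $\tr\{\mathcal{R}(\sigma)\mathcal{R}(\tilde{f} d\tilde{g})\}$, the operator being block-off-diagonal there. Only $\sigma$-fixed $\pi_E$ survive, and these are exactly the restrictions to $\G(\adele_E)$ of essential representations $\overpi$ of $\G(\adele_E)\rtimes\Galois$ in the sense of Definition \ref{essential}, each $\sigma$-fixed $\pi_E$ admitting precisely $p$ such extensions. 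After fixing, once and for all, the normalization of $\mathcal{R}(\sigma)$ on each essential isotypic piece coming from the preferred local factorization $\overpi(\sigma)=\otimes'_v\overpi_v(\sigma_v)$ of Remark \ref{factorization}, one gets
\[
\tr\{\mathcal{R}(\sigma)\mathcal{R}(\tilde{f} d\tilde{g})|\W\}=\sum_{\overpi \text{ essential}}\tr\{\mathcal{R}(\sigma)\mathcal{R}(\tilde{f} d\tilde{g})|\W[\overpi]\},
\]
whereas the right side of \eqref{bcidentity} is simply $\sum_{\pi}\tr\{r(f dg)|W[\pi]\}$.

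Next I would separate the isotypic contributions by varying the test functions at finitely many finite places. Fix matching archimedean components $\tilde{f}_\infty,f_\infty$ (these exist; one may use the explicit matching functions that appear later in the analytic-torsion comparison), and at a large finite set $S$ of places where $E_v/F_v$ is unramified and all data are spherical, let $\tilde{f}_v$ range over the spherical Hecke algebra of $\G(E_v)$ with $f_v:=b(\tilde{f}_v)$ its image under the base-change homomorphism $b$. The fundamental lemma for the unit (classical in the unramified case) ensures $\tilde{f}_v d\tilde{g}_v$ and $f_v dg_v$ match, and on the spectral side $\tr\bigl(\overpi_v(\tilde{f}_v)\circ\overpi_v(\sigma_v)\bigr)$ equals the Satake eigenvalue of $b(\tilde{f}_v)$ on the local base change $\pi_v$ of $\overpi_v$. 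Since only finitely many $\overpi$ (resp.\ $\pi$) occur with a nonzero archimedean and away-from-$S$ factor, linear independence of the finitely many distinct unramified eigensystems $\{\overpi_v\}_{v\in S}$ — Langlands' form of linear independence of characters — isolates each $\overpi$; matching its Satake parameters at $v\in S$, through the base change map on $L$-parameters, to those of a representation $\pi$ of $\G(\adele_F)$, and invoking strong multiplicity one for $\G$ (an inner form of $\PGL_2$, where this follows from Jacquet--Langlands and strong multiplicity one for $\GL_2$), pins down a unique such $\pi$ and conversely a unique $\overpi$. This produces the asserted bijection and, after also matching the local factors at the bad places, the identity \eqref{globalisotypic}; its validity for all matching pairs then follows since the displayed identity holds after multiplying by arbitrary matching spherical functions at $S$.

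The main obstacle, to my mind, is the bookkeeping of the intertwiner $\overpi(\sigma)$. A $\sigma$-fixed $\pi_E$ extends to $\G(\adele_E)\rtimes\Galois$ in $p$ ways (twisting by characters of $\Galois$), and \eqref{globalisotypic} is correct only for the extension whose local components $\overpi_v(\sigma_v)$ are normalized so as to fix the spherical vector at unramified $v$ — equivalently, compatibly with the local base change character identity; making this choice consistent globally, and checking that matching archimedean functions exist and interact correctly with $\sigma$, is where genuine care (and the oddness of $p$) is needed, and is exactly where the local base change input — the unramified case of the ``fundamental lemma for the spherical unit'' advertised in $\S\ref{maintools}$ — enters. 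Everything else is soft: compactness of the quotients removes all analytic difficulty, and strong multiplicity one upgrades the correspondence of Hecke data to an honest bijection of automorphic representations.
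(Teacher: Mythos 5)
The paper does not actually prove this corollary: it is imported from the Saito--Shintani--Langlands/Jacquet--Langlands theory of base change (\cite{Lan}, \cite{JL}), and your sketch is essentially a compressed version of the argument given in that cited source — unfold Theorem \ref{unrefinedbc} over $\sigma$-orbits (non-fixed orbits act block-off-diagonally, hence contribute zero trace), then separate isotypic pieces by varying spherical test functions at a large unramified set via the base-change homomorphism, linear independence of Hecke eigensystems, and strong multiplicity one, with the local data at the bad places pinned down along the way (this is how Theorem \ref{localbc} is obtained there). So your route agrees with the paper's, i.e.\ with its citation; the only caveats worth recording are that at the auxiliary places you need the spherical fundamental lemma for the whole Hecke algebra (the base-change homomorphism), not merely for the unit, and that the oddness of $p$ is used not only to normalize $\overpi(\sigma)$ but also to ensure local base change preserves square-integrability, so that the $\sigma$-stable representation of $\GL_2(\adele_E)$ transfers back to the quaternion algebra and the correspondence really is a bijection onto essential representations.
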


\subsection{Local-global compatibility for base change and a consequence for multiplicities}
\label{localglobal}

The groups $R_{E/F}\G_E$ and $\G$ satisfy multiplicity one, by the Jacquet-Langlands correspondence and multiplicity one for $\GL_2.$  Therefore, the equality of equation (\ref{globalisotypic}) is equivalent to
\begin{equation} \label{singlerep}
\tr \{ \overpi(\sigma) \overpi(\tilde{f} d\tilde{g}) \} = \tr \{ \pi(f dg) \}
\end{equation}
for all pairs of matching test functions $\tilde{f} d\tilde{g}$ and $f dg.$  Using equation (\ref{singlerep}), Langlands proves a local version of base change and local-global compatiblity.

\begin{thm}[\cite{Lan}, $\S 7$] \label{localbc}
Let $E/F$ be a cyclic Galois extension of odd prime degree $p.$  There is a bijection between irreducible representations $\pi_v$ of $\G(F_v)$ and representations $\overpi_v$ of $\G(E_v) \rtimes \Galois$ which are irreducible after restricting to $\G(E_v).$  This bijection is determined by the equality of traces 
$$\tr \{ \overpi_v(\sigma_v) \overpi_v(\tilde{f}_v \tilde{g}_v) \} = \tr \{ \pi_v(f_v dg_v) \}$$
for all pairs of matching test functions $\tilde{f}_v d\tilde{g}_v$ and $f_v dg_v.$  This bijection is compatible with the global bijection of Theorem \ref{refinedbc} in the sense that $\pi = \otimes' \pi_v$ globally matches the unique essential representation $\overpi = \otimes' \overpi_v.$  
\end{thm}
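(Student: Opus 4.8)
The plan is to deduce the local statement from the global refined base change of Theorem~\ref{refinedbc} together with multiplicity one, following Langlands~\cite[\S7]{Lan}. First I would establish the existence of a local correspondence $\pi_v \leftrightarrow \overpi_v$ by a local trace identity argument: one knows from the theory of the twisted trace formula that for a fixed place $v$, the twisted orbital integrals of a local test function $\tilde f_v d\tilde g_v$ determine a distribution on $\G(E_v) \rtimes \Galois$ which, by local harmonic analysis (Weyl integration, density of regular semisimple orbital integrals), is a combination of twisted characters $\tr\{\overpi_v(\sigma_v)\overpi_v(\cdot)\}$ over the essential $\overpi_v$. The matching condition on orbital integrals (Definition~\ref{matching}) pairs this with the ordinary distribution on $\G(F_v)$ built from $f_v dg_v$, which is a combination of ordinary characters $\tr\{\pi_v(\cdot)\}$. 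The core analytic input is that matching test functions have matching (twisted) orbital integrals by definition, so the two distributions agree on matching functions; linear independence of characters (ordinary on the $\G(F_v)$ side, twisted on the $\G(E_v)\rtimes\Galois$ side) then forces a bijective pairing of the constituents.

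Next I would pin down this local bijection uniquely and show it is the ``right'' one. The key device is a globalization argument: given a local $\pi_v$, choose a quaternion algebra $D/F'$ over an auxiliary field (or use the given $D/F$) and an automorphic representation $\pi$ of $\G$ whose component at $v$ is $\pi_v$ and whose components elsewhere are as simple as possible (e.g.\ unramified or Steinberg), with $\G(F)\backslash\G(\adele_F)$ compact so Theorem~\ref{refinedbc} applies verbatim. Then $\pi$ base-changes to an essential $\overpi = \otimes'_w \overpi_w$, and by specializing the global trace identity \eqref{singlerep} to test functions of the form $\tilde f_v d\tilde g_v \otimes (\text{simple choices away from } v)$ — using that at the ``known'' places the local matching is already understood (split places by the Example of \S8, unramified places by the fundamental lemma to be proven in \S\ref{unrammatching}) — one isolates the identity $\tr\{\overpi_v(\sigma_v)\overpi_v(\tilde f_v d\tilde g_v)\} = \tr\{\pi_v(f_v dg_v)\}$ for all matching pairs at $v$. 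This both exhibits $\overpi_v$ as the local base change of $\pi_v$ and, via linear independence of twisted characters, shows the correspondence is a bijection onto the essential representations. Local-global compatibility is then automatic: if $\pi = \otimes'\pi_v$ globally matches $\overpi'$, then $\overpi'$ and $\otimes'\overpi_v$ have equal twisted traces against all matching global test functions (both equal $\tr\{\pi(fdg)\}$ by \eqref{singlerep} applied to each), so $\overpi' \cong \otimes'\overpi_v$ by the uniqueness in Theorem~\ref{refinedbc}.

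The main obstacle, I expect, is the local harmonic analysis underpinning the first step — namely proving that a local twisted orbital integral distribution is genuinely a finite combination of twisted characters of essential representations, with enough control (invariance, support, linear independence) to run the inversion. For $\G$ the adjoint group of a quaternion algebra this is considerably easier than the general $\GL_n$ case: $\G(F_v)$ is either $\PGL_2(F_v)$ or the adjoint group of a division algebra (hence compact mod center), the twisted space $\G(E_v)\rtimes\Galois$ is correspondingly tractable, and the relevant representation theory is essentially that of $\PGL_2$. So in practice one cites Langlands~\cite[\S7]{Lan} for $\GL_2$, transports it through the Jacquet--Langlands correspondence to $D^\times$, and passes to the adjoint group; the only genuinely new bookkeeping is tracking the factorization $\overpi(\sigma) = \otimes'_v\overpi_v(\sigma_v)$ (Remark~\ref{factorization}), i.e.\ checking that the local intertwiners normalizing $\overpi_v(\sigma_v)$ can be chosen so their product is the global intertwiner $\overpi(\sigma)$ — which follows because the global intertwiner is pinned down (up to the scalar $p$-th root of unity, killed by oddness of $p$) by the trace identity, and matches the product of local ones on a dense set of test functions.
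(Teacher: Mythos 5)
The paper contains no proof of this statement: it is quoted directly from Langlands \cite{Lan}, \S 7, the surrounding text saying only that the local theorem is extracted from the refined global trace identity \eqref{singlerep} --- exactly the globalization-and-isolation mechanism you sketch --- and transported to the adjoint group of $D$ via Jacquet--Langlands, which is also the bottom line of your final paragraph. So your proposal is consistent with the paper's purely citational treatment, and your reconstruction is broadly the standard argument. One step of your sketch is too quick, however: linear independence of (twisted) characters does not by itself force a \emph{bijection}, because the functions $f_v\,dg_v$ occurring in the matching relation are only those whose orbital integrals vanish off the image of the norm map; hence the character identity determines $\pi_v$ only up to twist by a character of $F_v^{\times}/N(E_v^{\times})$, and for $\GL_2$ the fibers of local base change are precisely such twist-orbits. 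What collapses these orbits to single points in the present setting is that $\G$ is the \emph{adjoint} group (trivial central character) combined with the oddness of $p$: twisting by a character $\omega$ of order dividing $p$ alters the central character by $\omega^{2}$, which is nontrivial unless $\omega=1$ when $p$ is odd. So the hypothesis that $p$ is odd enters the injectivity of the correspondence itself, not merely the normalization of the intertwiner $\overpi_v(\sigma_v)$ as your last paragraph suggests.
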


\begin{rem}
As promised in Remark \ref{factorization}, the above local-global compatibility theorem has the following consequence: for any essential representation $\overpi = \otimes' \overpi_v$ of $\G(\adele_E) \rtimes \Galois$ matching the representation $\pi = \otimes' \pi_v$ of $\G(\adele_F),$ there is a canonical factorization
$$\overpi(\sigma) = \otimes' \overpi(\sigma_v)$$  
\end{rem}

\subsubsection{Local base change and ``trace matching"}
\label{tracematching}

\begin{defn} \label{tracematchingtestfunctions}
Let $\tilde{f} d\tilde{g}$ be a smooth measure on $\G(E_v)$ and let $f dg$ be a smooth measure on $\G(F_v).$  We say that $\tilde{f} d\tilde{g}$ and $f dg$ are \emph{trace-matching} if for every irreducible admissible representation $\pi$ of $\G(F_v),$ there is an equality
$$\tr \{ BC(\pi)(\sigma) BC(\pi)(\tilde{f} d\tilde{g})\}  = \tr \{ \pi(fdg) \}.$$ 
\end{defn}   

\begin{exam}
If $\tilde{f} d\tilde{g}$ and $f dg$ have matching orbital integrals in the sense of definition \ref{matching}, then they are trace-matching.  
\end{exam}

We record one straightforward consequence that Theorem \ref{localbc} on local-global compatibility of base change has for comparing ``multiplicities":

\begin{lem} \label{multiplicitymatching}
Let $S$ be any set of places.  Let $\U = \prod_{v \in S} \U_v$ be a compact open subgroup of the restricted tensor product $\prod'_{v \in S} \G(E_v).$  Suppose that $\mathbf{1}_{\U_v}du_v$ trace-matches $m_v$ for each $v \in S$ and that $\prod_{v \in S} m_v = \sum c_U \mathbf{1}_U du.$  Then for matching representations $\overpi = \otimes'_v \overpi_v$ of $\G(\adele_E) \rtimes \Galois$ and $\pi = \otimes \pi_v$ of $\G(\adele_F),$ there is an equality
$$\left\langle \overpi_S^\U \right\rangle = \sum_U c_U \dim \pi_S^U,$$
where $\bullet_S$ denotes the (restricted) tensor or cartesian product over all places $v \in S.$
\end{lem}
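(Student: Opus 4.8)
The plan is to reduce the global equality of multiplicities to the local trace-matching hypotheses place by place, using the local-global compatibility of base change (Theorem \ref{localbc}) to relate the action of $\sigma$ on $\overpi$ to the product of local actions. First I would recall that for matching representations, $\overpi = \otimes'_v \overpi_v$ and $\pi = \otimes'_v \pi_v$ with $\pi_v$ and $\overpi_v$ matching locally, and that by the preceding remark $\overpi(\sigma)$ factors canonically as $\otimes'_v \overpi_v(\sigma_v)$. Restricting attention to the places in $S$, the space $\overpi_S^{\U} = \bigotimes_{v \in S} \overpi_v^{\U_v}$ carries the action of $\sigma_S = \prod_{v \in S} \sigma_v$, so that $\langle \overpi_S^{\U} \rangle = \prod_{v \in S} \tr\{\sigma_v \mid \overpi_v^{\U_v}\}$. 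The problem thus decouples into a product over $v \in S$, and it suffices to show that $\tr\{\sigma_v \mid \overpi_v^{\U_v}\}$ is computed by applying $m_v$ to $\pi_v$.

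The second step is to interpret the trace of $\sigma_v$ on the $\U_v$-invariants as the trace of an equivariant convolution operator. Concretely, $\tr\{\sigma_v \mid \overpi_v^{\U_v}\} = \tr\{\overpi_v(\sigma_v)\overpi_v(\mathbf{1}_{\U_v}\,du_v)\}$, since $\overpi_v(\mathbf{1}_{\U_v}\,du_v)$ (suitably normalized so that $\U_v$ has volume one) is the projection onto $\overpi_v^{\U_v}$ and commutes with $\overpi_v(\sigma_v)$ because $\U_v$ is $\sigma$-stable. By the trace-matching hypothesis on $\mathbf{1}_{\U_v}\,du_v$ and $m_v$, this equals $\tr\{\pi_v(m_v)\}$. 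Writing $m_v = \sum_{U_v} c_{U_v}\mathbf{1}_{U_v}\,du_v$ after expanding the product $\prod_v m_v = \sum_U c_U \mathbf{1}_U\,du$ into its local factors, one gets $\tr\{\pi_v(m_v)\} = \sum_{U_v} c_{U_v}\dim\pi_v^{U_v}$. Taking the product over $v \in S$ and using multiplicativity of both $\langle\cdot\rangle$ and $\dim(\cdot)^{(\cdot)}$ over tensor factors then yields $\langle \overpi_S^{\U}\rangle = \sum_U c_U \dim\pi_S^{U}$, which is the claim.

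The main obstacle I anticipate is the bookkeeping around the factorization $\prod_{v\in S} m_v = \sum_U c_U \mathbf{1}_U\,du$: one needs to be careful that the product of local measures $m_v$ genuinely expands as a finite linear combination of indicator functions of product compact open subgroups $U = \prod_{v\in S} U_v$ with the constants $c_U$ being products $\prod_v c_{U_v}$, so that the trace $\tr\{\pi_v(m_v)\}$ really does split off one factor at a time. This is essentially a distributivity argument once each $m_v$ is itself a finite sum of scaled indicators, but it requires that trace-matching is compatible with this decomposition — which follows since trace is linear. A second minor point is the normalization of Haar measures: the identification of $\overpi_v(\mathbf{1}_{\U_v}\,du_v)$ with the orthogonal projection onto invariants requires $du_v$ to give $\U_v$ volume one, and one must check this normalization is consistent with the one implicit in the trace-matching hypothesis; since Definition \ref{tracematchingtestfunctions} and the example following it fix measures only up to the matching-orbital-integral convention, this is a routine compatibility check rather than a genuine difficulty.
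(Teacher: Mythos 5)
Your proposal is correct and follows essentially the same route as the paper: factor $\langle \overpi_S^\U \rangle = \tr\{\overpi_S(\sigma)\overpi_S(\mathbf{1}_\U du)\}$ into local twisted traces via the canonical factorization of $\overpi(\sigma)$, apply the trace-matching hypothesis at each place to replace $\tr\{\overpi_v(\sigma_v)\overpi_v(\mathbf{1}_{\U_v}du_v)\}$ by $\tr\{\pi_v(m_v)\}$, and reassemble using $\prod_{v} m_v = \sum_U c_U \mathbf{1}_U du$ and linearity of the trace. The bookkeeping issues you flag (measure normalization, expansion of the product measure) are handled implicitly in the paper in exactly the way you describe.
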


\begin{proof}
This follows directly by theorem \ref{localbc} and definition \ref{tracematching} of trace-matching:
\begin{eqnarray*}
\left\langle \overpi_S^\U \right\rangle &:=& \tr \{  \overpi_S(\sigma) \overpi_S(\mathbf{1}_\U du)  \} \\
&=& \prod_{v \in S} \tr \{ \overpi_v(\sigma) \overpi_v(\mathbf{1}_{\U_v} d\tilde{u}_v) \} \\
&=& \prod_{v \in S} \tr \{ \pi( m_v ) \} \\
&=& \tr \{ \pi_S \left( \sum  c_U \mathbf{1}_U du \right) \} \\
&=& \sum_U c_U \dim \pi_S^U.
\end{eqnarray*}
\end{proof}

\subsection{Local systems and matching representations}
\label{compatibleloc}

Let $\G$ be a semisimple group over the number field $F.$  Let $i_E: F \hookrightarrow E$ be a cyclic Galois extension of prime degree $p.$  Note that $(R_{E/F} \G)_E = \prod_{\Galois} \G_E,$ where $\Galois$ acts by permuting the factors according to its left translation action on itself.  Let $i_N: F \hookrightarrow N$ be a second finite extension.  Let $R_{E/F} \G \rightarrow R_{N/F} \GL(V),$ for $V$ a finite dimensional $N$-vector space, be an algebraic homomorphism defined over $F.$
%
%

\subsubsection{Definition and examples of matching representations and local systems}
Fix a complex embedding $\iota: N \hookrightarrow \mathbb{C}.$  Let $\C_{\iota} = N \otimes_{\iota} \C.$  Then $R_{E/F} \G_E(\C_{\iota}) = \prod_{\iota'} \G(\C_{\iota'})$ where the product runs over all $\iota': E \hookrightarrow \mathbb{C}$ extending $\iota \circ i_N.$  The Galois group $\Galois$ acts on this product by permuting the factors according to its action on the set $\{ \iota'|\iota \}.$  Furthermore, the induced homomorphism $R_{E/F} \G(F \otimes \R) \rightarrow \GL(V)(\C_{\iota})$ factors as 
$$R_{E/F} \G(F \otimes \R) \rightarrow R_{E/F} \G(F \otimes \C) \rightarrow R_{E/F} \G(\C_{\iota}) \rightarrow \GL(V)(\C_{\iota}),$$
the second map being induced by the projection $F \otimes \C \rightarrow \C_{\iota}.$

%
%
%
%

\begin{defn}[matching representations for the same coefficient field] \label{compatible}
Let $\widetilde{\rho}: R_{E/F} \G_E \rtimes \Galois \rightarrow R_{N/F} \GL(V_{\widetilde{\rho}})$ and $\rho: \G \rightarrow R_{N/F} \GL(V_{\rho})$ be representations of algebraic groups over $F.$  We say that $\widetilde{\rho}$ and $\rho$ \emph{match} if
$$\widetilde{\rho}_{\iota} = \otimes_{\iota' | \iota} \rho_{\iota},$$
with $\Galois$ acting by permutations on the set $\{ \iota' | \iota \}.$  For local systems $V_{\widetilde{\rho}}$ and $V_{\rho}$ of $N$-vector spaces arising in the manner of $\S \ref{rational},$ we say that $V_{\widetilde{\rho}}$ and $V_{\rho}$ match exactly when $\widetilde{\rho}$ and $\rho$ match. 
\end{defn}

\begin{defn}[matching representations for different coefficient fields] \label{compatible'}
Let $\widetilde{\rho}: R_{E/F} \G_E \rtimes \Galois \rightarrow R_{E/F} \GL(V_{\widetilde{\rho}})$ and $\rho: \G \rightarrow \GL(V_{\rho})$ be representations of algebraic groups over $F.$  We say that $\widetilde{\rho}$ and $\rho$ \emph{match} if
$$\widetilde{\rho}_{\iota} = \otimes_{\iota' | \iota} \rho_{\iota},$$
with $\Galois$ acting by permutations on the set $\{ \iota' | \iota \}.$  For local systems $V_{\widetilde{\rho}}$ of $E$-vector spaces and $V_{\rho}$ of $F$-vector spaces arising in the manner of $\S \ref{rational},$ we say that $V_{\widetilde{\rho}}$ and $V_{\rho}$ match exactly when $\widetilde{\rho}$ and $\rho$ match. 
\end{defn}

\textbf{Main examples of matching representations.}

\begin{itemize}
\item
Let $\rho: \G \rightarrow R_{N/F} \GL(V)$ be any representation over $F.$  Suppose that $N$ is a Galois closure of $F$ containing $E.$   For any $F$-algebra $A, \rho$ induces a homomorphism 
$$\G(A \otimes_F E) \rightarrow GL((V \otimes_F A) \otimes_F E \otimes_F N) \rightarrow \prod_i GL((V \otimes_F A) \otimes_F N) \rightarrow GL \left( \bigotimes_i (V \otimes_F A) \otimes_F N \right)$$  
where the product runs over all $i$ for which $F \xrightarrow{i_E} E \xrightarrow{i} N$ and $i \circ i_E = i_N$ where $i_E$ and $i_N$ are the embeddings defining $E/F$ and $N/F.$  The second map above is induced by the ring map
\begin{eqnarray*}
A \otimes_F E \otimes_F N &\rightarrow& A \otimes_F N \\
a \otimes_F e \otimes_F n &\mapsto& ( a \otimes_F i(e) \cdot n)_i.
\end{eqnarray*}
The above homomorphisms are functorial in $A$ and so define a map 
$$R_{E/F} \G \rightarrow R_{F/N} \GL \left(\bigotimes_i V \right).$$
This map extends naturally to 
$$\widetilde{\rho}: R_{E/F} \G \rtimes \Galois \rightarrow R_{F/N} GL \left(\bigotimes_i V \right),$$
where $\Galois$ acts by permuting the embeddings $i.$  The homomorphisms $\rho$ and $\widetilde{\rho}$ match.

\item
Let $\rho: \G \rightarrow \GL(V)$ be any representation over $F.$  There is a natural map  
$$ \G(A \otimes_F E) \rtimes \Galois \rightarrow GL(V \otimes_F A \otimes_F E)$$
given by composing $\rho$ on $A \otimes_F E$-valued points with the $F$-algebra homomorphism
\begin{eqnarray*}
A \otimes_F E &\rightarrow& A \otimes_F E \\
a \otimes_F e  &\mapsto& a \otimes_F \sigma(e)
\end{eqnarray*}
for each $\sigma \in \Galois.$  This collection of maps is functorial in $F$-algebras $A$ and so defines an algebraic group homomorphism over $F$
$$\widetilde{\rho}: R_{E/F} \G \rtimes \Galois \rightarrow R_{E/F}\GL (V_{\rho}).$$
The representations $\rho$ and $\widetilde{\rho}$ match. 
\end{itemize}

\subsection{An abstract matching theorem}
\label{abstractmatching}

\subsubsection{Notational setup for the matching theorem}

Recall that $\mathcal{M}_{\mathcal{U}} = \G(E) \backslash \G(\mathbb{A}_E) / \mathcal{K} \mathcal{U},$ where $\mathcal{K}, \mathcal{U}$ are chosen to be $\Galois$-stable.  This locally symmetric space is then acted on by $\Galois = \langle \sigma \rangle,$ which is cyclic of prime degree $p,$ by isometries. Similarly, for compact open $U \subset \G(\finadele_F)$ and maximal compact $K,$ we let $M_U = \G(E) \backslash \G(\adele_F) / KU.$  Let $\rho: \G \rightarrow R_{F/N}\GL(V_{\rho})$ and $\widetilde{\rho}: R_{E/F} \G \rightarrow R_{F/N} \GL(V_{\widetilde{\rho}})$ be matching representations and let $\iota: N \hookrightarrow \C$ be a complex embedding.  We recall the notation
$$\W_{\widetilde{\rho}} = L^2(\G(E) \backslash \G(\adele_E) ) \otimes \widetilde{\rho}_\iota,$$
$$\W_{\U, \widetilde{\rho}} = L^2(\G(E) \backslash \G(\adele_E) / \U) \otimes \widetilde{\rho}_\iota,$$
$$W_{\rho} = L^2(\G(F) \backslash \G(\adele_F) ) \otimes \rho_\iota,$$
$$W_{U, \rho} = L^2(\G(E) \backslash \G(\adele_E) / U) \otimes \rho_\iota.$$
For any compact group $J,$ we let $dj$ denote its volume 1 Haar measure.

\subsubsection{Statement and proof of the matching theorem}

The goal of this section is to prove the following theorem:

\begin{thm} \label{abstractmatchingthm}
Assume that that
$$\prod \mathbf{1}_{\U_v} du_v = \mathbf{1}_{\mathcal{U}} d\tilde{u} \text{ and } \prod m_v = \sum_{\Pi(\U)} c_U \mathbf{1}_U du \text{ are trace-matching test functions}$$
 (see Definition \ref{tracematchingtestfunctions}), where $\Pi(\U)$ is a finite set of compact open subgroups of $\G(\finadele_F).$  Let $\widetilde{\rho}$ and $\rho$ be matching representations of $R_{E/F} \G_E \rtimes \Galois$ and $\G$ (defined in $\S \ref{compatibleloc}$).  Then
\begin{equation}
\zeta_{j,\M_{\U}, V_{\widetilde{\rho}}, \sigma}(s) = \begin{cases}
p (-1)^{a^2(p-1)} \sum_{\Pi(U)} c_U \; \zeta_{a,M_U, V_{\rho}}(s) \cdot p^{-s} & \text{ if } j = pa \\
0 & \text{ if } j \neq 0 \mod p.
\end{cases}
\end{equation}
\end{thm}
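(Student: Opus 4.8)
The plan is to combine the representation-theoretic formula~\eqref{there} for $\zeta_{j,\M_\U,V_{\widetilde\rho},\sigma}(s)$ with the local-global compatibility of base change (Theorem~\ref{localbc}, Corollary~\ref{refinedbc}) and the K\"unneth computation already performed in the proof of the Lemma in~\S\ref{torsionfunctoriality}. First I would fix an essential automorphic representation $\overpi$ of $\G(\adele_E)\rtimes\Galois$ and let $\pi$ be the representation of $\G(\adele_F)$ it matches by base change, so that $\overpi_\infty\cong\pi_\infty^{\boxtimes p}$ with $\sigma$ acting by cyclic permutation (this is the archimedean content of Theorem~\ref{localbc}, using $\G(E_\R)\cong\G(F_\R)^p$). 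I would then analyze each of the three factors in the summand of~\eqref{there}: the multiplicity $m(\overpi)$, the finite-part trace $\langle\overpi_{\mathrm{fin}}^\U\rangle$, and the archimedean trace $\langle\Hom_\K(\wedge^j\widetilde{\mathfrak p},\overpi_\infty\otimes\widetilde\rho_\iota)\rangle$.

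For the finite part, I would invoke Lemma~\ref{multiplicitymatching}: since $\prod\mathbf 1_{\U_v}du_v$ trace-matches $\prod m_v=\sum_{\Pi(\U)}c_U\mathbf 1_U du$, we get $\langle\overpi_{\mathrm{fin}}^\U\rangle=\sum_{\Pi(\U)}c_U\dim\pi_{\mathrm{fin}}^U$, and by multiplicity one $m(\overpi)=m(\pi)=1$ matches as well, so $m(\overpi)\langle\overpi_{\mathrm{fin}}^\U\rangle=\sum_{\Pi(\U)}c_U\,m(\pi)\dim\pi_{\mathrm{fin}}^U$. For the archimedean part I would use the K\"unneth formula for $(\g,K)$-cohomology-style $\Hom$ complexes: because $\widetilde\rho_\iota=\otimes_{\iota'|\iota}\rho_\iota$ (matching representations, Definition~\ref{compatible}/\ref{compatible'}) and $\widetilde{\mathfrak p}=\bigoplus_{i=1}^p\mathfrak p$ with $\K\cong K^p$, one has a graded isomorphism $\Hom_\K(\wedge^\bullet\widetilde{\mathfrak p},\overpi_\infty\otimes\widetilde\rho_\iota)\cong\bigl(\Hom_K(\wedge^\bullet\mathfrak p,\pi_\infty\otimes\rho_\iota)\bigr)^{\otimes p}$ with $\sigma$ acting by (signed) cyclic permutation of tensor factors. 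Taking the $\sigma$-trace degree by degree, the cyclic-permutation trace fact ($\tr\{\text{cyclic perm}\mid V^{\otimes p}\}=\dim V$, applied with signs in the graded setting) collapses the total-degree-$pa$ piece to $(-1)^{a^2(p-1)}\dim\Hom_K(\wedge^a\mathfrak p,\pi_\infty\otimes\rho_\iota)$ and kills all degrees not divisible by $p$; the sign bookkeeping comes from Koszul signs when permuting $\wedge^a\mathfrak p$ factors within $\wedge^{pa}\widetilde{\mathfrak p}$.

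The last ingredient is the eigenvalue matching: the Casimir eigenvalue on $\overpi_\infty$ is $p$ times that on $\pi_\infty$ (since the Killing form on $\mathrm{Lie}(\G(E_\R))$ restricts to $p$ copies scaled appropriately, per the normalization in~\S\ref{homogeneous}), and likewise $\lambda_{\widetilde\rho_\iota}=p\,\lambda_{\rho_\iota}$; hence $\lambda_{\widetilde\rho_\iota}-\lambda_{\overpi_\infty}=p(\lambda_{\rho_\iota}-\lambda_{\pi_\infty})$, which produces the $\lambda\mapsto p\lambda$ substitution and the factor $p^{-s}$ after reindexing the Dirichlet series, together with an overall $p$ from the $p$ distinct extensions $\iota'|\iota$ contributing identically (equivalently, from $m(\overpi)$ vs.\ summing over the $p$ embeddings). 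Assembling: for $j=pa$,
\begin{equation*}
\zeta_{pa,\M_\U,V_{\widetilde\rho},\sigma}(s)=\sum_\lambda\lambda^{-s}\!\!\sum_{\substack{\overpi\ \text{ess}\\ \lambda_{\widetilde\rho_\iota}-\lambda_{\overpi_\infty}=\lambda}}\!\!m(\overpi)\langle\overpi_{\mathrm{fin}}^\U\rangle\,(-1)^{a^2(p-1)}\dim\Hom_K(\wedge^a\mathfrak p,\pi_\infty\otimes\rho_\iota),
\end{equation*}
and substituting the finite-part identity and $\lambda=p\mu$ with $\mu=\lambda_{\rho_\iota}-\lambda_{\pi_\infty}$ gives exactly $p(-1)^{a^2(p-1)}\sum_{\Pi(U)}c_U\,\zeta_{a,M_U,V_\rho}(s)\,p^{-s}$ by comparison with~\eqref{untwistedthere}, while $j\not\equiv0\bmod p$ yields $0$.

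I expect the main obstacle to be the sign $(-1)^{a^2(p-1)}$: getting the Koszul/graded-commutativity signs right when identifying $\Hom_\K(\wedge^{pa}\widetilde{\mathfrak p},\cdots)$ as the degree-$pa$ part of the graded tensor power and computing the trace of the cyclic permutation on it requires care, since $\sigma$ permutes $p$ copies of the odd-or-even graded object $\wedge^a\mathfrak p$ and the sign depends on the parity of $a$ and on $p-1$. A secondary point requiring justification is the precise normalization constant in the Casimir eigenvalue scaling $\lambda_{\widetilde\rho_\iota}-\lambda_{\overpi_\infty}=p(\lambda_{\rho_\iota}-\lambda_{\pi_\infty})$, which depends on the metric normalization fixed in~\S\ref{homogeneous}; everything else (multiplicity one, trace-matching, local-global base change) is quotable from the results already established above.
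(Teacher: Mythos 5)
Your proposal follows the paper's proof essentially step for step: rewrite the two zeta functions via \eqref{there} and \eqref{untwistedthere}, use multiplicity one to drop $m(\overpi)$ and $m(\pi)$, apply Lemma \ref{multiplicitymatching} to convert $\left\langle \overpi_{\mathrm{fin}}^\U \right\rangle$ into $\sum_U c_U \dim \pi_{\mathrm{fin}}^U$, use $\overpi_{\infty} \cong \pi_{\infty}^{\boxtimes p}$ and $\widetilde{\rho}_\iota \cong \rho_\iota^{\boxtimes p}$ with $\sigma$ acting by a cyclic shift to get the signed K\"{u}nneth identity \eqref{gK} (the paper quotes the Koszul-sign computation from \cite{Lip1} rather than redoing it, but the content is the same), use the Casimir scaling \eqref{casimireigenvalues} to produce the $p^{-s},$ and sum over the base-change bijection of Theorem \ref{refinedbc}. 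In structure this is exactly the paper's argument.

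The one substantive divergence is your accounting of the overall prefactor $p,$ and your proposed source for it does not hold up: the correspondence $\pi \leftrightarrow \overpi$ of Theorem \ref{refinedbc} is a bijection, so nothing is being counted $p$ times; the embeddings $\iota' | \iota$ are already absorbed into the single representation $\widetilde{\rho}_\iota = \otimes_{\iota'|\iota} \rho_\iota$ and contribute no sum; and $m(\overpi) = 1$ by multiplicity one, so it cannot supply a factor of $p$ either. Multiplying the three matching identities, as both you and the paper do, yields
\begin{equation*}
\zeta_{pa,\M_\U, V_{\widetilde{\rho}},\sigma}(s) = (-1)^{a^2(p-1)} \, p^{-s} \sum_U c_U \, \zeta_{a,M_U,V_\rho}(s),
\end{equation*}
with no extra $p$; the paper's proof simply asserts the stated prefactor without deriving it from \eqref{multiplicities}, \eqref{casimireigenvalues}, \eqref{gK}. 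Note moreover that Corollary \ref{abstracttorsion}, with its single factor of $p$ arising from the weight $j = pa$ in the definition of $\tau_{\sigma},$ is consistent precisely with the identity \emph{without} the extra $p$ (with it one would obtain $p^2$). So you should either drop that prefactor from the zeta-level identity or locate it honestly in the passage from zeta functions to torsion, but not attribute it to the embeddings or to multiplicities; everything else in your write-up is sound and matches the paper.
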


\begin{proof}
Because the groups $\G$ and $R_{E/F} \G$ satisfy multiplicity one, we can rewrite the expression derived in $\S \ref{reptorsion}$ for the above zeta functions from equations (\ref{there}) and (\ref{untwistedthere}) as: 
\begin{equation} \label{upstairs}
\zeta_{j, \M_\U, V_{\widetilde{\rho}},\sigma}(s) = \sum_{\lambda} \lambda^{-s} \sum_{\overpi \text{ essential}, \lambda_{\widetilde{\rho}} - \lambda_{\overpi_{\infty}} = \lambda }  \left\langle \overpi_{\mathrm{fin}}^\U \right\rangle \cdot \left\langle \Hom_{\mathcal{K}}(\wedge^j \widetilde{\mathfrak{p}}, \overpi_{\infty} \otimes \widetilde{\rho}_\iota) \right\rangle. 
\end{equation}
\begin{equation} \label{downstairs}
\zeta_{j, M_U, V_\rho}(s) = \sum_{\lambda} \lambda^{-s} \sum_{\pi, \lambda_{\rho} - \lambda_{\pi_{\infty}} = \lambda } \dim \pi_{\mathrm{fin}}^U \cdot \dim \left( \Hom_{K}(\wedge^j \mathfrak{p}, \pi_{\infty} \otimes \rho_\iota) \right). 
\end{equation}


We now discuss matching for the multiplicities, the Casimir eigenvalues, and the $(\mathfrak{g},K)$-cochain groups in turn.

\begin{itemize}
\item
By Lemma \ref{multiplicitymatching}, for any matching pair of an essential representation $\overpi$ of $\G(\adele_E) \rtimes \Galois$ and an irreducible representation $\pi$ of $\G(\adele_F),$ there is an equality 
\begin{equation} \label{multiplicities}
\left\langle \overpi_{\mathrm{fin}}^\U \right\rangle = \sum_U c_U \dim \pi_{\mathrm{fin}}^U.
\end{equation}
\item
We can relate the Casimir eigenvalues of $\overpi_{\infty}$ and $\pi_{\infty}$ by being more explicit about the relationship between $\overpi_{\infty}$ and $\pi_{\infty}.$  Because $[E:F]$ is odd $E_\R \cong (F_\R)^p.$  A choice of isomorphism determines an isomorphism $\G(E_\R) = \G(F_\R)^p.$  Langlands calculates, in \cite[$\S 8$]{Lan}, that the representation $\pi_{\infty}$ corresponds to the representation $\overpi_{\infty} = \pi_{\infty}^{\boxtimes p}$ of $\G(E_\R) \rtimes \Galois$ equipped with intertwining isomorphism given by a cyclic shift. Similarly, $\widetilde{\rho}_\iota$ is isomorphic to $\rho^{\boxtimes p}_\iota$ with intertwining isomorphism given by the same cyclic shift.  Therefore, 
\begin{equation} \label{casimireigenvalues}
\lambda_{\widetilde{\rho}_\iota} - \lambda_{\overpi_{\infty}} = p ( \lambda_{\rho_\iota} - \lambda_{\pi_{\infty}}).
\end{equation} 

\item
Finally, note that because $\overpi_{\infty} \otimes \widetilde{\rho}_{\iota} \cong \pi_{\infty}^{\boxtimes p} \otimes \rho_{\iota}^{\boxtimes p}$ the $(\mathfrak{g},K)$-chain complexes obey a ``K\"{u}nneth relationship": 
$$\Hom_{\K}(\wedge^{\bullet} \widetilde{\mathfrak{p}}, \overpi_{\infty} \otimes \widetilde{\rho}_\iota) = \Hom_K(\wedge^\bullet \mathfrak{p}, \pi_{\infty} \otimes \rho_\iota)^{\otimes p} \hspace{0.5cm} \text{ (graded tensor product)}$$ 
(see \cite[I.1.3]{BW}) which is equivariant for the cyclic shift on the right and the $\sigma$ action on the left.  Exactly as in our calculations from \cite[
Proposition 5.1]{Lip1}, it follows that
\begin{equation} \label{gK}
\langle\Hom_\K(\wedge^j \widetilde{\mathfrak{p}}, \overpi_{\infty} \otimes \widetilde{\rho}_\iota)\rangle = \begin{cases}
(-1)^{a^2(p-1)} \cdot \dim \left( \Hom_K(\wedge^a \mathfrak{p}, \pi_{\infty} \otimes \rho_\iota) \right) & \text{ if } j = pa \\
0 & \text{ if } j \text{ is not a multiple of } p. 
\end{cases}
\end{equation}  
\end{itemize}

By Langlands' formulation of base change (see Theorem \ref{refinedbc}), there is a bijection between irreducible subrepresentations $\pi \subset W$ of $\G(\adele_F)$ and essential representations $\overpi \subset \W$ of $\G(\adele_E) \rtimes \Galois.$  Multiplying $(\ref{multiplicities}), (\ref{casimireigenvalues})^{-s}$ and $(\ref{gK})$ together and summing over all subrepresentations $\pi$ of $W$ therefore gives the result. 
\end{proof}

An immediate corollary concerning analytic torsion is as follows:

\begin{cor} \label{abstracttorsion}
$$\log \tau_{\sigma}(\M_\U, V_{\widetilde{\rho}}) = p \left( \sum c_U  \log \tau(M_U,V_{\rho}) - \log(p) Z_{M_U,V_{\rho}}(0) \right).$$
\end{cor}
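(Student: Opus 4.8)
The plan is to derive Corollary \ref{abstracttorsion} from Theorem \ref{abstractmatchingthm} purely by manipulation of the zeta functions and their derivatives at $s=0$. Recall that analytic torsion is defined, via the Ray--Singer formula, by
\[
\log \tau_{\sigma}(\M_\U, V_{\widetilde{\rho}}) = \tfrac{1}{2}\sum_{j} (-1)^j j \, \zeta_{j,\M_\U,V_{\widetilde{\rho}},\sigma}'(0),
\]
and likewise $\log \tau(M_U, V_{\rho}) = \tfrac{1}{2}\sum_a (-1)^a a\, \zeta_{a,M_U,V_{\rho}}'(0)$. So the whole computation comes down to plugging the case distinction of Theorem \ref{abstractmatchingthm} into the left-hand sum, differentiating at $s=0$, and reindexing $j = pa$.

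First I would substitute: only the terms $j = pa$ survive, contributing
\[
\log \tau_{\sigma}(\M_\U,V_{\widetilde{\rho}}) = \tfrac12 \sum_a (-1)^{pa}\, (pa)\, \frac{d}{ds}\Big|_{0}\Big( p(-1)^{a^2(p-1)} \sum_{\Pi(U)} c_U\, \zeta_{a,M_U,V_{\rho}}(s)\, p^{-s}\Big).
\]
Here I use that $p$ is odd so $(-1)^{pa} = (-1)^a$, and that $a^2 \equiv a \pmod 2$ together with $p-1$ even forces $(-1)^{a^2(p-1)} = 1$; that is the only genuinely arithmetic input and it is where the hypothesis that $p$ is an odd prime gets used. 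Next, by the Leibniz rule, $\frac{d}{ds}|_0(\zeta_{a}(s) p^{-s}) = \zeta_a'(0) - \log(p)\,\zeta_a(0)$. Pulling out the factor $p$ and collecting, the first term reassembles $\sum_a (-1)^a a\, \zeta_{a,M_U,V_{\rho}}'(0) = 2\log\tau(M_U,V_{\rho})$ for each $U$, weighted by $c_U$; the second term produces $-\log(p)\sum_a (-1)^a a\, \zeta_{a,M_U,V_{\rho}}(0)$, and by definition $Z_{M_U,V_{\rho}}(0) := \tfrac12\sum_a (-1)^a a\,\zeta_{a,M_U,V_{\rho}}(0)$ (this is the ``derived Euler characteristic''/regularized dimension combination appearing in Ray--Singer; I would state precisely which normalization is in force). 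Assembling these gives exactly
\[
\log \tau_{\sigma}(\M_\U,V_{\widetilde{\rho}}) = p\Big( \sum_{\Pi(U)} c_U \log\tau(M_U,V_{\rho}) - \log(p)\, Z_{M_U,V_{\rho}}(0)\Big),
\]
as claimed.

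The main obstacle is bookkeeping rather than conceptual: one must be careful that the zeta functions in Theorem \ref{abstractmatchingthm} are genuinely holomorphic at $s=0$ (so that the Leibniz step is legitimate and $\zeta(0)$ makes sense), and that the identity of Theorem \ref{abstractmatchingthm} is an identity of meromorphic functions that remains valid after differentiation --- both of which follow from the standard strong-acyclicity/heat-kernel regularity already invoked for $\tau_\sigma$ and $\tau$. A secondary point is making sure the sign $(-1)^{a^2(p-1)}$ really does collapse to $1$ in all the cases that occur; since $p-1$ is even this is immediate, but it is worth recording explicitly because a wrong sign here would propagate. The rest is the routine reindex $j \mapsto pa$ and an application of the definitions of $\log\tau$ and of $Z_{M_U,V_{\rho}}(0)$, which I would carry out in a couple of displayed lines.
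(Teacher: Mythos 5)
Your strategy is exactly the intended one: the paper offers no separate argument for Corollary \ref{abstracttorsion}, treating it as immediate from Theorem \ref{abstractmatchingthm} by precisely the substitution you perform (zeta-function definition of torsion, reindexing $j = pa$, Leibniz applied to $\zeta_{a}(s)p^{-s}$, and the sign collapses $(-1)^{pa} = (-1)^a$, $(-1)^{a^2(p-1)} = 1$). But your write-up has a factor-of-$p$ discrepancy that you pass over silently. Your displayed intermediate identity carries \emph{two} factors of $p$: one from the weight $j = pa$ in the definition of torsion, and one from the explicit prefactor $p$ in the displayed statement of Theorem \ref{abstractmatchingthm}. From that display the conclusion would be $p^2\left( \sum_U c_U \log\tau(M_U,V_{\rho}) - \log(p)\,Z_{M_U,V_{\rho}}(0)\right)$, not $p(\cdots)$; the phrase ``pulling out the factor $p$'' conceals the loss of one $p$.

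The resolution is that the prefactor $p$ in the statement of Theorem \ref{abstractmatchingthm} is not actually produced by its proof: multiplying \eqref{multiplicities}, \eqref{casimireigenvalues}$^{-s}$ and \eqref{gK} and summing over $\pi$ gives
\begin{equation*}
\zeta_{j,\M_\U,V_{\widetilde{\rho}},\sigma}(s) = (-1)^{a^2(p-1)}\sum_{\Pi(\U)} c_U\, \zeta_{a,M_U,V_{\rho}}(s)\, p^{-s} \qquad (j = pa),
\end{equation*}
with no prefactor. (Sanity check: in the split model $\M_\U = M_U^p$, $\loc = L^{\boxtimes p}$, the trace of the cyclic permutation on the $\lambda$-eigenspace of $pa$-forms equals, up to the sign of \eqref{gK}, the dimension of the $\lambda/p$-eigenspace of $a$-forms downstairs --- again no factor $p$; and only with this normalization is the corollary compatible with the sample theorem $\tau_\sigma(\M_\U,\loc) = \tau(M_U,L)^p$.) So the single factor $p$ in the corollary comes solely from the weight $j = pa$, and your derivation should say this explicitly rather than inheriting the spurious $p$ from the theorem statement and then discarding it. Two smaller points: the term $\log(p)\,Z_{M_U,V_{\rho}}(0)$ should appear inside the sum over $U$ weighted by $c_U$ (the paper's own statement is loose here), and your normalizations ($\tfrac12\sum_j(-1)^j j\,\zeta_j'(0)$ and $Z(0):=\tfrac12\sum_a(-1)^a a\,\zeta_a(0)$) need to be matched against the paper's definition $\tau_\sigma = \prod_j \mathrm{det}'(\sigma^*\circ\Delta_j)^{j(-1)^j}$, which has neither the $\tfrac12$ nor the same sign convention; this does not affect the relative factor of $p$, but it does affect what $Z$ literally denotes.
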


\begin{rem} 
The second summand on the right hand side is a red herring.  It would disappear by scaling the metric on $M_U$ by $p.$  So in particular, if $V_{\rho, \mathbb{Q}}$ is acyclic, the analytic torsion is metric independent and so the second summand automatically vanishes.  
\end{rem}

\section{Matching at places where $E/F$ is unramified} 
\label{unrammatching}

We recall our notational setup:
Let $\G$ be the adjoint group of the group of units of a quaternion algebra $D$ over a number field $F.$  Say $E/F$ is a cyclic Galois extension of prime degree $p.$  The Galois group $\Galois = \langle \sigma \rangle$  acts on $\G(\mathbb{A}_E).$ \bigskip

%
%

The goal of this section is to describe instances of local trace-matching (see Definition  \ref{tracematching}) at places $v$ for which $E_v / F_v$ unramified (see, in particular, Theorem \ref{parahoricmatching}); this will enable us to prove relationships between twisted analytic torsion on locally symmetric spaces related by base change - cf. Theorem \ref{abstractmatching}.

\subsection{Parahoric level structure}
Throughout this section, assume that $E/F$ is everywhere unramified. \bigskip

The argument for proving Theorem \ref{abstractmatching} relating $\tau_{\sigma}(V_{\widetilde{\rho}})$ to $\tau(V_{\rho})$ hinges on the fact that  $\mathbf{1}_\U d \tilde{u}$ and $m = \mathbf{1}_{U^{\Sigma}} du^{\Sigma} \times \prod_{v \in \Sigma} m_v$ and are matching test functions.  We introduce a type of level structure for which we will be able to prove such a matching theorem.

\begin{defn}[Parahoric level structure] \label{parahoric}
Call a compact open subgroup $\U_v \subset \G(E_v)$ \emph{locally parahoric} if it satisfies the following conditions: 

\begin{itemize}
\item
If $D$ is ramified at $v,$ then $\U_v$ should equal the image of the units of the maximal order of $D_v$ in $\G(E_v).$  

\item
Suppose $E_v = E \otimes_F F_v$ is split and $D$ is unramified at $v.$   Identify $\G(E_v) = \G(F_v)^p.$  Then $\U_v = U_v^p$ for an arbitrary compact open $U_v \subset \G(F_v).$   

\item
Say $E_v / F_v$ is unramified, cyclic of degree $p,$ and that $D$ is unramified at $v.$  The tree of $\G / F_v$ injects into the tree of $\G / E_v$ with image identified as the Galois invariants.  We insist that $\U_v = J,$ where $J$ is the pointwise stabilizer in $\G(E_v)$ of either a vertex or an edge of the tree of $\G / F_v.$
\end{itemize}

If $\U = \prod \U_v$ is locally parahoric at all places $v$ of $F,$ then we call $\U$ \emph{(globally) parahoric}.  If $\U$ is locally parahoric for all $v \notin \Sigma,$ then we call $\U$ \emph{parahoric outside} $\Sigma.$ 
\end{defn}

\begin{rem}
The above definition does not apply at places $v$ where $E_v / F_v$ is ramified.  We extend the definition of parahoric level structure, for places $v$ where $E_v / F_v$ is tamely ramified, in Definition $\ref{tameparahoric}.$
\end{rem}

To every parahoric level structure $\U \subset \G(\mathbb{A}_E^{\mathrm{fin}}),$ we associate a matching level structure $U \subset \G(\mathbb{A}_F^{\mathrm{fin}}).$

\begin{defn} \label{associatedlevel}
Let $\U_v \subset \G(E_v)$ be parahoric at $v.$  We say that $U_v$ is \emph{associated to} $\U_v$ if one of the following hold:

\begin{itemize}
\item
Suppose $D_v$ ramified.  We require that $U_v$ equals the image of the units of the maximal order of $D_v$ in $\G(F_v).$

\item
Suppose that $E_v$ split.  Then $\U_v = (U'_v)^p$ for some $U'_v \subset \G(F_v),$ and we require that $U_v = U'_v.$

\item
Suppose that $E_v / F_v$ is an unramified field extension and that $D_v$ unramified.  Then $\U_v$ equals the pointwise stabilizer in $\G(E_v)$ of some simplex of the tree of $\G / F_v,$ viewed as a subset of the tree of $\G / E_v.$  We require that $U_v$ be the stabilizer of that same simplex in $\G(F_v).$ 
\end{itemize}

We say that $U = \prod U_v$ is associated to $\U = \prod U_v$ if $U_v$ is \emph{associated to} $\U_v$ for all places $v.$  
\end{defn}

With these notions in hand, we now throw the kitchen sink at the issue of matching $\mathbf{1}_{\mathcal{U}} d\tilde{u} \leftrightarrow \mathbf{1}_{U} du.$

\begin{thm}[Kottwitz] \label{parahoricmatching}
Let $U_v$ be associated to a parahoric level structure $\U_v$ at $v.$  Then the test functions $\mathbf{1}_{\mathcal{U}_v} d \tilde{u}_v$ and $\mathbf{1}_{U_v} du_v$ geometrically match, where $d\tilde{u}_v$ and $du_v$ are volume 1 Haar measrues.  In particular,  $\mathbf{1}_{\mathcal{U}_v} d \tilde{u}_v$ and $\mathbf{1}_{U_v} du_v$ are trace-matching (see Definition \ref{tracematchingtestfunctions}). 
\end{thm}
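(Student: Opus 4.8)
The plan is to reduce the assertion to a fundamental-lemma-type statement for the spherical unit, which is precisely the content of work of Kottwitz (and, in the relevant cases, Clozel--Labesse--Langlands and Kazhdan) on base change for parahoric Hecke algebras. First I would dispose of the two easy cases. When $D_v$ is ramified, $\G(F_v)$ and $\G(E_v)$ are both compact modulo center, every twisted conjugacy class is automatically regular semisimple or central, the norm map on the relevant maximal compact subgroups is surjective onto norms, and the matching of orbital integrals of $\mathbf{1}_{\U_v}$ and $\mathbf{1}_{U_v}$ is a direct computation comparing volumes of twisted centralizers with volumes of centralizers under the compatible Haar measure normalization discussed after Definition \ref{matching}; this uses that $U_v$ is the image of the units of the maximal order and $\U_v$ its base-changed analogue. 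When $E_v = F_v^p$ is split, this is literally the Example attributed to \cite{Lan}, $\S 8$: the test function $\mathbf{1}_{U_v} \times \cdots \times \mathbf{1}_{U_v}$ on $\G(F_v)^p$ has convolution product $\mathbf{1}_{U_v}^{*p} = \mathbf{1}_{U_v}$ (since $U_v$ is a group and the measures have volume $1$), and this convolution matches by the cited result; the associated level structure is exactly $U_v = U_v'$, so there is nothing more to prove.

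The substance is the remaining case: $E_v/F_v$ an unramified cyclic extension of degree $p$, $D_v$ split, so $\G/F_v \cong \PGL_2$, and $\U_v = J$ the pointwise stabilizer in $\G(E_v)$ of a vertex or an edge $s$ of the Bruhat--Tits tree of $\PGL_2/F_v$, sitting inside the tree of $\PGL_2/E_v$ as the Galois-fixed subset; the associated $U_v$ is the pointwise stabilizer of the same simplex $s$ in $\G(F_v)$. I would invoke Kottwitz's theorem that, for an unramified reductive group over a local field and an unramified extension $E_v/F_v$, the characteristic function of a parahoric subgroup base-changes to the characteristic function of the corresponding parahoric over $F_v$ in the sense of matching orbital integrals. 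Concretely: for a regular semisimple $\sigma$-twisted conjugacy class $c$ in $\G(E_v)$ with norm $c' = Nc$, one has $\mathcal{O}_{c'}(\mathbf{1}_{U_v} du_v) = \mathcal{O}_{\sigma,c}(\mathbf{1}_{\U_v} d\tilde u_v)$, and the orbital integral vanishes when $c'$ is not a norm. The cleanest route is via the building: a twisted-conjugacy orbital integral of $\mathbf{1}_{J}$ counts fixed points of $\gamma\sigma$ on the set of $E_v$-simplices of the given type that lie over the $F_v$-simplex $s$, and because the tree of $\G/F_v$ is the $\sigma$-fixed subbuilding of the tree of $\G/E_v$ (an input already recorded in Definition \ref{parahoric}), Lang's theorem / the unramified descent identifies these with the fixed points of $\gamma'=N\gamma$ acting on $F_v$-simplices of type $s$, which is exactly the orbital integral of $\mathbf{1}_{U_v}$ downstairs. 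This is the ``fundamental lemma for the spherical/parahoric unit'' for unramified base change of $\PGL_2$, and it is the one nontrivial ingredient; I would cite Kottwitz and, for the parahoric refinement, the stable base change results of Clozel--Labesse--Langlands, rather than reproving it.

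The final clause, that geometric matching implies trace-matching, is then immediate: by Definition \ref{tracematchingtestfunctions} and the Example following it, ``matching orbital integrals'' formally implies ``trace-matching,'' since the twisted trace $\tr\{BC(\pi)(\sigma)BC(\pi)(\tilde f d\tilde g)\}$ is, by the Weyl integration formula and the character identity $\Theta_{BC(\pi)}^{\sigma}(\delta) = \Theta_{\pi}(N\delta)$ defining local base change, an integral of $\Theta_{\pi}(N\delta)$ against the twisted orbital integrals of $\tilde f d\tilde g$, which by geometric matching equals the same integral of $\Theta_\pi$ against the orbital integrals of $f dg$, i.e. $\tr\{\pi(fdg)\}$. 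The main obstacle is genuinely the parahoric fundamental lemma in the unramified-field case; everything else is bookkeeping with Haar measure normalizations (handled by the compatibility convention after Definition \ref{matching}) and the reduction to $\PGL_2$ via Jacquet--Langlands at the split-at-$v$ places.
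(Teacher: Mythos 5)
Your overall skeleton---reduce the theorem to the base-change fundamental lemma for the unit element, then note that geometric matching formally implies trace-matching---is the right one, and your closing paragraph (Weyl integration plus the character identity defining local base change) correctly supplies the ``in particular'' step that the statement leaves implicit. The split case via \cite[$\S 8$]{Lan} and the idempotency $\mathbf{1}_{U_v} \ast \cdots \ast \mathbf{1}_{U_v} = \mathbf{1}_{U_v}$ is also fine.

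The genuine gap is that the results you propose to cite (Kottwitz \cite{K}, and stable base change in the style of Clozel--Labesse--Langlands) are identities of \emph{stable} (twisted) orbital integrals, whereas Definition \ref{matching} demands an identity of \emph{ordinary} orbital integrals, class by class. For a general group a stable class breaks into several rational classes and the two statements differ; the entire content of the paper's proof is the verification that for $\G$ the adjoint group of $\underline{D}^{\times}$ this discrepancy disappears. Concretely: since the center of $\underline{D}^{\times}$ is split, Hilbert 90 gives $\G(F_v) = \underline{D}^{\times}(F_v)/F_v^{\times}$, and Noether--Skolem then shows that regular semisimple elements of $\G(F_v)$ which are conjugate over $\overline{F}_v$ are already $\G(F_v)$-conjugate; hence stable orbital integrals coincide with ordinary ones on both sides, and Kottwitz's identity $SO_{\delta,\sigma}(\mathbf{1}_{\U_v} d\tilde{u}_v) = SO_{N\delta}(\mathbf{1}_{U_v} du_v)$ becomes exactly the matching required by Definition \ref{matching}. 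Your write-up never engages with this stable-versus-rational issue, so as it stands the citations do not yield the statement being proved; your building-theoretic fixed-point count could in principle produce ordinary orbital integrals directly, but you present it only as a heuristic and, in the edge-stabilizer case, it is precisely the parahoric refinement you decline to prove. A secondary structural difference: the paper does not argue case by case---the split-center-plus-Noether--Skolem reduction works whether or not $D_{F_v}$ is ramified, so all three cases of Definition \ref{parahoric} are handled uniformly and your separate volume computation at ramified places of $D$ becomes unnecessary once the stable-to-ordinary reduction is in place.
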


\begin{proof}
In \cite{K}, Kottwitz proves that 
$$SO_{\delta,\sigma}( \mathbf{1}_{\mathcal{U}_v} d \tilde{u}_v) = SO_{N\delta}( \mathbf{1}_{U_v} du_v )$$
for any regular semisimple $\delta \in \G(E_v).$  Here $SO$ denotes the stable orbital integral, whose definition is given in \cite{K}.  Let $c$ be the $\G(F_v)$-conjugacy class of some element of $\G(F_v).$   Suffice it to say that if
$$\{ \G(\overline{F}_v) \text{ conjugacy class of } c \} \cap \G(F_v) = c,$$
for every regular semisimple conjugacy class, then $SO_{\delta,\sigma} = O_{\delta,\sigma}$ and $SO_{\gamma} = O_{\gamma}.$  Fortunately, this is precisely the situation we're in:

\begin{itemize}
\item
Say that $\G'$ is any reductive group over $F_v$ with center $\mathbf{Z}.$  If $\mathbf{Z}$ is split, then $\G'/\mathbf{Z}(F_v) = \G'(F_v) / \mathbf{Z}(F_v).$  Indeed, there is a Galois cohomology exact sequence
$$0 \rightarrow \G'(F_v) / \mathbf{Z}(F_v) \rightarrow  \G'/\mathbf{Z}(F_v) \rightarrow H^1(F_v, \mathbf{Z}).$$
If $\mathbf{Z}$ is split, the rightmost term vanishes, by Hilbert's theorem 90, and so $\G'/\mathbf{Z}(F_v) = \G'(F_v)/ \mathbf{Z}(F_v).$  The center of the unit group of a \emph{central} simple algebra is always split.

\item
Now suppose that $x,x' \in \underline{D}^{\times}(F_v)$ are regular semisimple and conjugate by $\underline{D}^{\times}(\overline{F}_v).$  Then the characteristic polynomials
$$p_x(t) = t^2 - \text{trd}(x) t + \text{nrd}(x), p_{x'}(t) = t^2 - \text{trd}(x')t + \text{nrd}(x')$$
are equal, say to $p(t).$  By the Noether-Skolem theorem, the two embeddings
$$F_v[t]/(p(t)) \xrightarrow{i_x, i_{x'}} D, i_x(t) = x, i_{x'}(t) = x'$$
are conjugate by $D_{F_v}^{\times},$ exactly as we wanted.  Note that this argument works whether or not $D_{F_v}$ is ramified.   
\end{itemize}

The above arguments show that for $\G$ the adjoint group of a the units of a quaternion algebra over $F_v, \G(\overline{F}_v)$-conjugacy collapses to $\G(F_v)$-conjugacy.  Thus, $\mathbf{1}_{\mathcal{U}_v} d \tilde{u}_v$ and $\mathbf{1}_{U_v}du_v$ match, as claimed.
\end{proof}

\section[Matching at tamely ramified places]{Matching at places where $E/F$ is tamely ramified}
\label{tamematching}

To broaden the applicability of Theorem \ref{abstractmatching}, we need to prove matching theorems at places $v$ where $E_v/F_v$ is ramified.  We assume throughout this chapter that $E_v/F_v$ be \emph{tamely ramified}.  Our immediate goals:

\begin{itemize}

\item[(1)]
characterize all (possibly ramified) representations $\pi$ of $\PGL_2(F_v)$ for which $BC(\pi)$ is unramified.  

\item[(2)]
Find a test function on $f dg$ on $\PGL_2(F_v)$ which trace-matches $\mathbf{1}_{\PGL_2(E_v)} d\tilde{g}.$
\end{itemize}

Characterizing those representations as in (1) will be relatively straightforward; this is done in $\S \ref{tamelanglandsparameters}.$  In analogy with the terminology used for elliptic curves, we will say that representations as in (1) have \emph{additive reduction}.  For (2), the main content is an analysis of $J$-fixed vectors in principal series representations for various compact open subgroups $J \subset \PGL_2(O_F)$; this is accomplished in $\S \ref{principalseries}.$  A test function which serves as an indicator function for representations with additive reduction is constructed in Theorem $\ref{additivetestfunction}.$ \bigskip 

\textbf{Notation for local fields}.  To ease notation in what follows, we will denote $E_v/F_v$ by $E/F.$  So for the remainder of this chapter, $E/F$ will denote a tamely ramified, cyclic Galois extension of local fields, \emph{not} an extension of global fields.

\subsection{Tamely ramified Langlands parameters and additive reduction}
\label{tamelanglandsparameters}

We want to characterize continuous representations
$$\sigma: W_F \rightarrow SL_2(\overline{\mathbb{Q}_{\ell}})$$
such that $\sigma|_{W_E}$ is unramified.  Since $E/F$ is tamely ramified, such representations $\sigma$ are tamely ramified.  But the tame Weil group has a very simple description:

$$W_F^{\textrm{tame}} = I_F^{\textrm{tame}} \rtimes \langle F \rangle$$

where $I_F^{\textrm{tame}}$ is procyclic, generated by a single element $u,$ and $F$ is any lift of the arithmetic Frobenius.  The generators $F$ and $u$ satisfy the single relation
$$FuF^{-1} = u^q, \text{ where } q = \# k_F.$$ 
\begin{rem}  
Because $\Galois$ is an abelian quotient of $W_F^{\textrm{tame}}$ of prime order $p,$ it follows that $p | q - 1.$ 
\end{rem}

We now prove a lemma characterizing all tame Langlands parameters of additive reduction.

\begin{lem}
All Langlands parameters of additive reduction are either unramified or conjugate to
$$\sigma = \sigma_{a,\zeta}:= F \mapsto \left( \begin{array}{cc}
a & 0 \\
0 & a^{-1}
\end{array}\right), u \mapsto \left( \begin{array}{cc}
\zeta & 0 \\
0 & \zeta^{-1}
\end{array} \right).$$
\end{lem}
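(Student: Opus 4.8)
The plan is to exploit the very explicit presentation of the tame Weil group $W_F^{\mathrm{tame}} = \langle u \rangle \rtimes \langle F \rangle$ with the single relation $FuF^{-1} = u^q$, together with the ``additive reduction'' hypothesis that $\sigma|_{W_E}$ is unramified. First I would note that $W_E^{\mathrm{tame}}$ contains the subgroup generated by $u^p$ (the inertia subgroup of $W_E$, since $E/F$ is tamely ramified of degree $p$ and $p \mid q-1$), so $\sigma|_{W_E}$ unramified forces $\sigma(u)^p = 1$; in particular $\sigma(u)$ is a semisimple element of $SL_2$ of finite order dividing $p$. If $\sigma(u) = 1$, then $\sigma$ is itself unramified and we are in the first case. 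So assume $\sigma(u) \neq 1$; then $\sigma(u)$ is a nontrivial semisimple element, hence (after conjugating) diagonal, $\sigma(u) = \mathrm{diag}(\zeta, \zeta^{-1})$ with $\zeta$ a primitive $p$-th root of unity (up to swapping, $\zeta$ and $\zeta^{-1}$).

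Next I would use the relation $FuF^{-1} = u^q$, which gives $\sigma(F)\,\mathrm{diag}(\zeta,\zeta^{-1})\,\sigma(F)^{-1} = \mathrm{diag}(\zeta^q, \zeta^{-q})$. Since $p \mid q-1$ we have $\zeta^q = \zeta$, so this says $\sigma(F)$ commutes with $\mathrm{diag}(\zeta,\zeta^{-1})$. As $\zeta \neq \zeta^{-1}$ (because $\zeta$ has order $p > 2$ — here $p$ odd is used, or at least $p \neq 2$), the centralizer of $\mathrm{diag}(\zeta,\zeta^{-1})$ in $SL_2$ is exactly the diagonal torus, so $\sigma(F) = \mathrm{diag}(a, a^{-1})$ for some $a$. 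This gives precisely the form $\sigma_{a,\zeta}$ claimed. (The residual conjugation freedom — permuting the two eigenlines — acts by $\zeta \mapsto \zeta^{-1}$, $a \mapsto a^{-1}$ simultaneously, which is harmless and is implicitly absorbed into the ``conjugate to'' in the statement.)

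Finally I would remark that conversely every $\sigma_{a,\zeta}$ with $\zeta^p = 1$ does respect the relation $FuF^{-1}=u^q$ (again because $\zeta^q = \zeta$) and restricts to an unramified parameter on $W_E$, so these are genuinely parameters of additive reduction; but since the lemma as stated only asserts one direction, this remark is optional. The only mild subtlety — and the one place I would be careful — is the reduction ``$\sigma|_{W_E}$ unramified $\Rightarrow \sigma(u)^p = 1$'': one must identify the tame inertia of $W_E$ inside that of $W_F$ correctly, using that $E/F$ is totally tamely ramified of degree $p$ so that a topological generator of $I_E^{\mathrm{tame}}$ is $u^p$ (up to the procyclic structure), whence $\sigma$ unramified on $W_E$ means trivial on $\overline{\langle u^p\rangle}$, i.e. $\sigma(u)^p = 1$ after passing to the closure and using semisimplicity/finite order of $\sigma(u)$. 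Everything else is a short direct computation with $2\times 2$ matrices, so I do not anticipate a serious obstacle; the main point to get right is simply this inertia bookkeeping.
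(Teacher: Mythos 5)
Your proposal is correct and takes essentially the same route as the paper: deduce $\sigma(u)^p = 1$ from the fact that $I_E^{\mathrm{tame}}$ is the index-$p$ closed subgroup of $I_F^{\mathrm{tame}}$ containing $u^p$, diagonalize $\sigma(u) = \mathrm{diag}(\zeta,\zeta^{-1})$, and use $FuF^{-1} = u^q$ together with $p \mid q-1$ (and $\zeta \neq \zeta^{-1}$, as $p$ is odd) to force $\sigma(F)$ into the diagonal torus. The paper carries out this last step by the explicit $2\times 2$ computation $b = c = 0$ rather than citing the centralizer of a regular semisimple element, but that is the same argument.
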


\begin{proof}
Note that $I_F^{\textrm{tame}} / I_E^{\textrm{tame}} = \Galois,$ which is a cyclic group of order $p.$  Thus, $I_E^{\textrm{tame}}$ must be the unique closed subgroup of index $p$ in $I_F^{\textrm{tame}}.$  In particular, $u^p \in I_E^{\textrm{tame}}.$  It follows that if $\sigma(F) = A, \sigma(u) = B,$ then 
$$ABA^{-1} = B^q \text{ and } B^p = 1.$$
After conjugation if necessary, we may suppose that $B = \left( \begin{array}{cc}
\zeta & 0 \\
0 & \zeta^{-1} \end{array} \right),$ where $\zeta^p = 1.$  

\begin{itemize}
\item
Assume that $\zeta \neq 1.$  Let $A = \left( \begin{array}{cc} 
a & b \\
c & d
\end{array}\right).$  Using the fact that $ABA^{-1} = B^q$ and that $\zeta^{q-1} = 1,$ we see that $b = c = 0.$  These give rise to the Langlands parameters
$$\sigma = \sigma_{a,\zeta}:= F \mapsto \left( \begin{array}{cc}
a & 0 \\
0 & a^{-1}
\end{array}\right), u \mapsto \left( \begin{array}{cc}
\zeta & 0 \\
0 & \zeta^{-1}
\end{array} \right).$$
These parameters correspond, by the local Langlands correspondence, to the representations $I_{s,\chi}:= \mathrm{Ind}_B^G ( |\cdot|^s \cdot \chi)$ where $\chi$ is a non-trivial character of $F^{\times} / N(E^{\times}).$

\item
If $\zeta = 1,$ then the corresponding parameter corresponds to an unramified representation.
\end{itemize}
\end{proof}


\subsection{A closer look at local base change}

Let $E/F$ be a tamely ramified extension of local fields.  The goal of this section is to construct a smooth, compactly supported test measure $m_{\mathrm{add}}$ on $\PGL_2(F)$ such that $\mathbf{1}_{\PGL_2(O_E)}d\tilde{k}$ trace-matches $m_{\mathrm{add}}$  (see Definition \ref{tracematchingtestfunctions}).  

\begin{itemize}
\item
In $\S \ref{orbitalintegralsviatraces},$ we explain how to recover the matching of orbital integrals in a more representation theoretic manner.  

\item
In $\S \ref{twistedspherical},$ we discuss the action of $\Galois$ on the $\PGL_2(O_E)$-fixed vectors of a representation of $\PGL_2(E)$ of the form $BC(\pi).$
\end{itemize}

\subsubsection{Strategy for proving the local base change trace identity}
\label{orbitalintegralsviatraces}

Recall the following definition from $\S \ref{quat}$ on base change:

\begin{defn}
A representation $\overpi$ of $\PGL_2(E) \rtimes \langle \sigma \rangle$ \emph{matches} the representation $\pi$ of $\PGL_2(F)$ if the following equality of traces holds:
\begin{equation} \label{localmatchingformula}
\tr \{ \overpi(\sigma) \overpi(\tilde{f} d\tilde{g}) \} = \tr \{ \pi(f dg)  \} 
\end{equation}
for every pair of matching test function $\tilde{f} d\tilde{g} \in \mathrm{SM}_c(\G(E)), f dg \in \mathrm{SM}_c(\G(F)).$  We write $\overpi = BC(\pi).$
\end{defn}

\begin{rem}
The intertwining isomorphism $\overpi(\sigma): \pi \rightarrow \pi^{\sigma}$ is a priori only well-defined up to a $p$th root of unity.  However, the equality of traces in the above definition uniquely determines $\overpi(\sigma).$
\end{rem}

%
%
%
%
%

For the particular choices of test functions $\tilde{f}_0 d\tilde{g}_0$ and $f_0 dg_0$ that we will make in the sequel, rather than proving that equation (\ref{localmatchingformula}) holds by demonstrating an equality of orbital integrals, we prove it directly by using explicitly understood features of local base change for $\PGL_2(F).$

\subsubsection{Action of $\sigma$ on $\PGL_2(O_E)$-fixed vectors}
\label{twistedspherical}

Let $\mu$ be the inflation to the upper triangular Borel subgroup $B \subset \PGL_2(F)$ of a character $T \rightarrow \mathbb{C}^{\times},$ where $T$ is the diagonal torus of $\PGL_2(F)$ and let $\delta$ denote the modulus character of $B.$  We let $(\rho_{\mu}, I_{\mu})$ denote the normalized principal series representation, i.e. the space of locally constant functions $f: \PGL_2(F) \rightarrow \mathbb{C}$ which transform by the rule $f(bg) = \mu(b) \delta(b)^{1/2} f(g)$ where $\rho_{\mu}$ acts by right translation.

We define $I_{\widetilde{\mu}}$ similarly for characters of the upper triangular Borel subgroup $\widetilde{B} \subset \PGL_2(E).$  If $\widetilde{\mu}^{\sigma} = \widetilde{\mu},$ then we extend $I_{\widetilde{\mu}}$ to a representation of $\PGL_2(E) \rtimes \Galois$ by the formula
$$\rho_{\widetilde{\mu}}(\sigma) f(g) = f(g^{\sigma^{-1}}).$$ 
\begin{prop}[\cite{Lan}, Corollary 7.3] \label{sphericalaction}
Let $N: E^{\times} \rightarrow F^{\times}$ denote the norm map.  The representations $I_{\mu \circ N}$ and $I_{\mu}$ match.  That is, for any pair $\tilde{f} d\tilde{g}$ and $f dg$ of matching test functions on $\PGL_2(E)$ and $\PGL_2(F)$ respectively, there is an equality

\begin{equation*}
\tr \{ \rho_{\mu \circ N}(\sigma) \rho( \tilde{f} d\tilde{g} )  \} = \tr \{ \rho_{\mu}(f dg) \}.
\end{equation*}
\end{prop}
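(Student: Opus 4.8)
The plan is to follow Langlands' argument in \cite[Ch.~7]{Lan}, reducing the asserted trace identity to an identity of (twisted) Harish--Chandra characters and then matching term by term in the Weyl integration formula. First I would write both sides as integrals of the test measures against character distributions: $\tr\{\rho_\mu(f dg)\} = \int_{\PGL_2(F)} f(g)\,\Theta_\mu(g)\,dg$, with $\Theta_\mu$ the Harish--Chandra character of $I_\mu$, and likewise $\tr\{\rho_{\mu\circ N}(\sigma)\rho(\tilde f d\tilde g)\} = \int_{\PGL_2(E)}\tilde f(g)\,\Theta_{\tilde\mu,\sigma}(g)\,d\tilde g$, where $\tilde\mu := \mu\circ N$ (which is $\sigma$-fixed, so $I_{\tilde\mu}$ really does extend to $\PGL_2(E)\rtimes\Galois$ by the formula in the statement) and $\Theta_{\tilde\mu,\sigma}$ is the $\sigma$-twisted character. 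Applying the Weyl integration formula on the right and its twisted analogue on the left rewrites each trace as a sum, over (twisted) conjugacy classes of maximal tori, of (twisted) orbital integrals of the test function weighted by character values on the torus. Since $\tilde f d\tilde g$ and $f dg$ match (Definition~\ref{matching}), the twisted orbital integral $\mathcal{O}_{\sigma,c}(\tilde f d\tilde g)$ equals $\mathcal{O}_{Nc}(f dg)$ for every regular twisted class $c$, and orbital integrals of $f$ over classes not of the form $Nc$ vanish; so the whole identity reduces to the character identity $\Theta_{\tilde\mu,\sigma}(\delta) = \Theta_\mu(N\delta)$ (suitably normalized) for regular semisimple $\delta$.

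The main step is that character computation. Because $I_\mu = \mathrm{Ind}_B^{\PGL_2(F)}(\mu\delta^{1/2})$ is a genuine parabolically induced representation, its character vanishes on regular elements not conjugate into the split torus $T$, and for regular $\gamma\in T(F)$ equals $|D(\gamma)|^{-1/2}(\mu(\gamma)+\mu(w\gamma))$, with $w$ the nontrivial Weyl element. I would prove the corresponding formula for $\Theta_{\tilde\mu,\sigma}$ directly: realize $I_{\tilde\mu}$ in its compact model on $\PGL_2(O_E)$ via the Iwasawa decomposition over $E$ (note $\sigma$ preserves $\PGL_2(O_E)$ and the $E$-Borel, since $O_E$ is $\sigma$-stable), express $\rho_{\tilde\mu}(\sigma)\rho_{\tilde\mu}(\tilde f d\tilde g)$ as an integral operator, and take the twisted trace by integrating its kernel along the graph of $\sigma$. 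Unfolding and re-applying the $E$-Iwasawa decomposition shows $\Theta_{\tilde\mu,\sigma}$ is supported, modulo twisted conjugacy, on $\delta$ whose norm $N\delta$ is split, and there equals $|D_\sigma(\delta)|^{-1/2}(\tilde\mu(\delta')+\tilde\mu(w\delta'))$ for a torus representative $\delta'$. Now $\tilde\mu = \mu\circ N$ gives $\tilde\mu(\delta') = \mu(N\delta')$, and the compatibility of Haar measures on the twisted centralizer and its inner form (\cite[Lemma~5.8]{K1}, used already in the remark after Definition~\ref{matching}) gives $|D_\sigma(\delta)| = |D(N\delta)|$; hence $\Theta_{\tilde\mu,\sigma}(\delta) = \Theta_\mu(N\delta)$.

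Assembling these: substituting the character identity and the change of variables $[\delta]\mapsto[N\delta]$ (which with the compatible measures is precisely the norm correspondence on split tori) into the twisted Weyl integration formula for the left-hand trace reproduces, term by term, the Weyl integration formula for $\tr\{\rho_\mu(f dg)\}$; the elliptic tori contribute nothing on either side, since parabolically induced characters vanish on the elliptic regular set. I expect the principal obstacle to be the twisted induced-character formula together with the attendant normalization bookkeeping --- making the twisted compact-model unfolding rigorous, and checking that the measures transported along the norm map agree with those implicit in Definition~\ref{matching} and the surrounding remark --- whereas the reduction via Weyl integration and the exploitation of $\tilde\mu = \mu\circ N$ are essentially formal.
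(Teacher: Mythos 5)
The paper offers no proof of this proposition---it is quoted directly as \cite[Corollary 7.3]{Lan}---and your plan is essentially Langlands' own argument: reduce, via the twisted and untwisted Weyl integration formulas plus the matching of orbital integrals, to the character identity $\Theta_{\mu\circ N,\sigma}(\delta)=\Theta_{\mu}(N\delta)$ for the induced representation, proved by a direct kernel/compact-model computation. The outline is sound; the one slip is attributing the discriminant comparison $|D_\sigma(\delta)|=|D(N\delta)|$ to the Haar-measure compatibility of \cite[Lemma 5.8]{K1}, which concerns measures on twisted centralizers and their inner forms rather than Weyl-discriminant factors---that identity is a separate (standard) computation in the normalization bookkeeping you already flag as the main point requiring care.
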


\begin{cor} \label{nodifference}
For every irreducible admissible representation $\pi$ of $\PGL_2(F),$ there is an equality
\begin{eqnarray*}
&{}& \tr \{ BC(\pi)(\sigma) BC(\pi)(\mathbf{1}_{\PGL_2(O_E)} d\tilde{k}) \} \\
&=& \tr \{ BC(\pi)(\mathbf{1}_{\PGL_2(O_E)} d\tilde{k})  \} = \begin{cases}
1 & \text{ if } BC(\pi) \text{ is unramified } \\
0 & \text{ otherwise.}
\end{cases}
\end{eqnarray*}
\end{cor}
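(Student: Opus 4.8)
The plan is to recognise $BC(\pi)(\mathbf{1}_{\PGL_2(O_E)}d\tilde k)$ as nothing but the projection $P$ of $BC(\pi)$ onto its $\PGL_2(O_E)$-fixed subspace, to show that this subspace is one--dimensional exactly when $BC(\pi)$ is unramified (and zero otherwise), and to show that in the unramified case the intertwiner $BC(\pi)(\sigma)$ acts on that line by $+1$. Granting this, both displayed equalities follow immediately: since $\sigma$ normalises $\PGL_2(O_E)$, the operator $BC(\pi)(\sigma)$ preserves $\im P = BC(\pi)^{\PGL_2(O_E)}$, so $\tr\{BC(\pi)(\sigma)P\}=\tr\{BC(\pi)(\sigma)\mid BC(\pi)^{\PGL_2(O_E)}\}$, and this equals $\dim BC(\pi)^{\PGL_2(O_E)}=\tr\{P\}$ once we know $\sigma$ acts as the identity there.

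First I would invoke Theorem \ref{localbc}: since $[E:F]=p$ is odd, $BC(\pi)$ is irreducible after restriction to $\PGL_2(E)$, so by uniqueness of the spherical vector $\dim BC(\pi)^{\PGL_2(O_E)}\le 1$, with equality precisely when $BC(\pi)$ is unramified. This already settles the second displayed equality, and reduces the first to the claim that $BC(\pi)(\sigma)$ fixes the spherical line whenever $BC(\pi)$ is unramified; when $BC(\pi)$ is ramified all three quantities vanish and there is nothing to prove.

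To pin down the sign I would use the classification of tame Langlands parameters of additive reduction proved just above: if $BC(\pi)$ is unramified then $\pi$ is either an unramified principal series of $\PGL_2(F)$ or $\pi\cong I_{s,\chi}=\mathrm{Ind}_B^{\PGL_2(F)}(|\cdot|^s\chi)$ with $\chi$ a non--trivial character of $F^\times/N(E^\times)$. In both cases $\pi$ is a subquotient of a principal series $I_\mu$ of $\PGL_2(F)$ whose transfer $I_{\mu\circ N}$ is \emph{unramified}, because $\chi\circ N$ is trivial on $E^\times$. By Proposition \ref{sphericalaction}, $BC(\pi)$ is then (the spherical constituent of) $I_{\mu\circ N}$ equipped with the intertwiner $\rho_{\mu\circ N}(\sigma)\colon f\mapsto f(\cdot^{\sigma^{-1}})$ --- and, crucially, Proposition \ref{sphericalaction} asserts that \emph{this} operator, not merely some $p$-th root of unity multiple of it, is the one realising base change. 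Now the Iwasawa decomposition $\PGL_2(E)=\widetilde B\,\PGL_2(O_E)$ shows that $I_{\mu\circ N}^{\PGL_2(O_E)}$ is spanned by the unique function $f_0$ with $f_0\equiv 1$ on $\PGL_2(O_E)$; since $\sigma^{-1}$ preserves $\PGL_2(O_E)$ we get $(\rho_{\mu\circ N}(\sigma)f_0)(k)=f_0(k^{\sigma^{-1}})=1$ for $k\in\PGL_2(O_E)$, while $\rho_{\mu\circ N}(\sigma)f_0$ is again $\PGL_2(O_E)$-fixed; hence $\rho_{\mu\circ N}(\sigma)f_0=f_0$. Thus $BC(\pi)(\sigma)$ is the identity on the spherical line, which completes the argument.

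The step I expect to require the most care is the bookkeeping around normalisation and reducibility in this last paragraph: that the intertwiner furnished by Proposition \ref{sphericalaction} really is $BC(\pi)(\sigma)$ on the nose (this is exactly the content of the uniqueness remark following the definition of matching, the trace identity pinning the operator down), and that, when $I_\mu$ --- equivalently $I_{\mu\circ N}$ --- happens to be reducible, the spherical vector $f_0$ lies in the constituent which is $BC(\pi)$ rather than the Steinberg-type one (it is the spherical constituent that carries $f_0$, by compatibility of base change with the Langlands quotient). The remaining checks --- that Steinberg and supercuspidal $\pi$ have ramified base change and so fall under the trivial case, and that $\mu\circ N$ and the modulus character are $\Galois$-invariant --- are routine.
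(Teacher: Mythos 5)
Your proposal is correct and follows essentially the same route as the paper's proof: reduce to an unramified principal series and apply Proposition \ref{sphericalaction} together with the explicit intertwiner $f \mapsto f(\cdot^{\sigma^{-1}})$, which visibly fixes the spherical vector, the ramified case being vacuous. The only divergence is in the reducible case, where the paper simply checks directly that the trivial representations of $\PGL_2(F)$ and $\PGL_2(E)\rtimes\Galois$ match (both characters are constantly $1$) instead of invoking compatibility of base change with Langlands quotients, and where the paper embeds $BC(\pi)$ into an unramified principal series directly rather than passing through the classification of additive-reduction parameters.
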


\begin{proof}
The claim is vacuously true if $BC(\pi)$ is ramified.  If it is unramified, then it can be embedded in an unramified principal series representation $V$; indeed, any representation with an Iwahori fixed vector can be embedded into an unramified principal series representation.  

\begin{itemize}
\item
If $V$ is irreducible, then $BC(\pi) = V.$  But Proposition $\ref{sphericalaction}$ shows that for such principal series representations, in the image of base change, the action of $\Galois$ on the $\PGL_2(O_E)$ fixed vector is trivial.

\item
If $V$ is reducible, then $BC(\pi)$ is either Steinberg, which is ramified, or the trivial representation.  One readily checks that the trivial representations of $\PGL_2(F)$ and $\PGL_2(E) \rtimes \Galois$ match because each has character given by the constant function 1.  Therefore, $\Galois$ again acts trivially on the $\PGL_2(O_E)$-fixed vector of $BC(\pi).$
\end{itemize} 

The result follows.
\end{proof}

\subsection{Constructing a ``test function of additive reduction"}

According to Corollary \ref{nodifference}, finding a function on $\PGL_2(F)$ which trace-matches the volume 1 Haar measure on $\PGL_2(O_E)$ is equvalent to finding a test measure $m_{\text{add}}$ on $\PGL_2(F)$ satisfying

$$\tr \; \pi(m_{\text{add}}) = \begin{cases}
1 & \text{ if } \pi \text{ has additive reduction} \\
0 & \text{ otherwise}.
\end{cases}$$

The Langlands parameters of all $\pi$ of additive reduction are, in particular, tamely ramified.  By the local Langlands correspondence, they therefore have Iwahori fixed vectors.  Therefore, a natural place to start constructing such a test measure $m_{\text{add}}$ is to modify the measure $\mathbf{1}_J dj$ for congruence subgroups of Iwahori subgroups.

\subsubsection{Fixed vectors for congruence subgroups of Iwahori subgroups of $\PGL_2(F)$}
\label{fixedvectors}

Let $\mathbf{B} \subset \PGL_2 / O_F$ be a Borel subgroup and $\mathbf{N}$ be its unipotent radical and $\mathbf{T} = \mathbf{B} / \mathbf{N}.$  Let $k_F$ denote the residue field of $O_F.$  Let $C$ be any subgroup of $\mathbf{T}(k_F).$  We let $I_C \subset \PGL_2(O_F)$ denote the preimage of $C$ under the reduction map.  The subgroup $I_1$ is one such example.

\subsubsection{Supercuspidal representations have no $I_C$-fixed vectors}
\label{supercuspidal}

We quote the following theorem:

\begin{prop}[\cite{BH}, $\S 14.3$]
If $V$ is a supercuspidal representation of $\PGL_2(F),$ then $V^{I_1} = 0.$  In particular, $V^{I_C} = 0$ for any $C$ as above.  
\end{prop}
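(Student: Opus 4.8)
This is quoted from \cite{BH}; I indicate how one would prove it directly. Write $K = \PGL_2(O_F)$, let $I \subset K$ be the Iwahori subgroup (the pointwise stabilizer of an edge $e$ of the Bruhat--Tits tree of $\PGL_2/F$), and recall that $I_1$ is the pro-unipotent radical of $I$: in Moy--Prasad notation $I_1 = G_{e,0^+}$, and $I/I_1 \cong \mathbf{T}(k_F)$. Since every $C$ contains the identity we have $I_1 \subseteq I_C$, hence $V^{I_C} \subseteq V^{I_1}$, so it suffices to prove $V^{I_1} = 0$ whenever $V$ is supercuspidal. The plan is to split according to the depth of $V$.

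If $V$ has positive depth, this is immediate: by definition of depth, $V^{G_{x,0^+}} = 0$ for every point $x$ of the building, and applying this with $x = e$ gives $V^{I_1} = V^{G_{e,0^+}} = 0$. If $V$ has depth zero, I would invoke the Moy--Prasad classification of depth-zero supercuspidals: $V$ is compactly induced, $V \cong \ind_{\widetilde{K}}^{\PGL_2(F)}\Lambda$, where $\widetilde{K}$ is the normalizer of the (hyperspecial) maximal parahoric $K$ and $\Lambda|_K$ is the inflation to $K$ of an irreducible \emph{cuspidal} representation $\lambda$ of $K/K_1 \cong \PGL_2(k_F)$, with $K_1 = \ker(K \to \PGL_2(k_F))$; that the inducing datum lives on a vertex and that $\lambda$ is cuspidal is exactly what supercuspidality of $V$ forces. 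Restricting $V$ to $I$ and applying Mackey's formula over the double cosets $\widetilde{K} \backslash \PGL_2(F) / I$, the summand attached to the base double coset contributes $\Lambda|_I$, whose space of $I_1$-fixed vectors is $\lambda^{\mathbf{N}(k_F)}$; this is $0$ because the $\mathbf{N}(k_F)$-coinvariants of $\lambda$ vanish by cuspidality (and, for the finite group $\mathbf{N}(k_F)$, coinvariants agree with invariants via the averaging idempotent). For the remaining double cosets, which correspond to vertices at positive distance from the base vertex, the intersections $I_1 \cap g\widetilde{K} g^{-1}$ are too small to produce $I_1$-fixed vectors: concretely, a nonzero $I_1$-fixed function in $\ind_{\widetilde{K}}^{\PGL_2(F)}\Lambda$ has finite support modulo $\widetilde{K}$, and pushing its support toward the base vertex along the tree would force a nonzero element of $\lambda^{\mathbf{N}(k_F)}$, a contradiction. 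Hence $V^{I_1} = 0$, and therefore $V^{I_C} = 0$ for all $C$.

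The main obstacle is the depth-zero case: it relies both on the classification of depth-zero supercuspidal representations (to reduce to a cuspidal $\lambda$ on a vertex) and on a somewhat delicate bookkeeping of the Mackey double cosets together with the geometry of the tree, in order to rule out $I_1$-fixed vectors ``supported away from the base vertex.'' In \cite{BH} this is organized quite differently, through the explicit theory of admissible pairs and types for $\GL_2$, but the conclusion is the same; the positive-depth case, by contrast, costs nothing once the Moy--Prasad definition of depth is in hand.
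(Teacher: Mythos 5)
The paper itself gives no argument here---it simply quotes \cite{BH}, $\S 14.3$---so the only thing to assess is whether your direct sketch is sound. Most of it is: the reduction $V^{I_C} \subseteq V^{I_1}$, the positive-depth case via the definition of depth, the identification of depth-zero supercuspidals of $\PGL_2(F)$ as $\ind_{K}^{G}\lambda$ with $\lambda$ the inflation of a cuspidal representation of $\PGL_2(k_F)$ (for $\PGL_2$ the normalizer $\widetilde{K}$ is just $K=\PGL_2(O_F)$, so this is harmless), and the vanishing of the identity-coset contribution $\lambda^{\mathbf{N}(k_F)}$ by cuspidality are all correct.

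The gap is in your handling of the remaining double cosets, which you yourself flag as the main obstacle. The assertion that ``the intersections $I_1 \cap g\widetilde{K}g^{-1}$ are too small to produce $I_1$-fixed vectors'' is backwards: a \emph{small} intersection makes invariants easier to come by, not harder (if the intersection sat inside your $K_1$, that Mackey summand would contribute all of $\lambda$). What is actually true is that the intersections are \emph{large}. An $I_1$-invariant $f\in\ind_K^G\lambda$ supported on $KgI_1$ has $f(g)$ fixed by the image of $K\cap gI_1g^{-1}$ in $K/K_1\cong\PGL_2(k_F)$, and that image always contains the full unipotent radical of a Borel of $\PGL_2(k_F)$: since $I$ normalizes $I_1$ and $K$-conjugation only conjugates the image, one may take $g=\mathrm{diag}(\varpi^n,1)$, $n\ge 1$; conjugation scales the lower-left entry by $\varpi^{-n}$, so membership in $K$ forces that entry of the Iwahori-unipotent element into $\mathfrak{m}_F^n$ while its conjugate sweeps out all of $O_F$, and reducing mod $\mathfrak{m}_F$ one gets the entire lower-triangular unipotent group (the upper-right entry dies because it is scaled by $\varpi^{n}$). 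Cuspidality, applied to this opposite unipotent radical, then kills every double coset separately, with no need for the support-pushing argument you gesture at but do not carry out. Note also that the full unipotent radical is genuinely needed: when $q=\#k_F$ is not prime, a cuspidal representation of $\PGL_2(k_F)$ restricted to $\mathbf{N}(k_F)$ is the sum of all nontrivial characters, so it \emph{does} have nonzero invariants under proper nontrivial subgroups, and ``contains some nontrivial unipotent element'' would not suffice. Alternatively, you can avoid both the depth-zero classification and the Mackey bookkeeping: if $V^{I_1}\neq 0$ then $V$ contains a character of $I$ trivial on $I_1$, i.e.\ a depth-zero principal-series type, which forces the Jacquet module of $V$ to be nonzero, contradicting supercuspidality; this is closer to how the statement is organized in \cite{BH}.
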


\subsubsection{$I_C$-fixed vectors for principal series representations of $\PGL_2(F)$}
\label{principalseries}

Let $B = \mathbf{B}(F), N = \mathbf{N}(F), T = \mathbf{T}(F).$  We prove a simple lemma concerning the space of $J$-fixed vectors, for an arbitrary compact open $J,$  in principal series representations $I_{\chi} = \mathrm{Ind}_B^G \chi,$ where $\chi: B \rightarrow \mathbb{C}^{\times}$ is the inflation of a character of $T.$ 

\begin{lem} \label{Jfixed}
The space of $J$-fixed vectors $(I_{\chi})^J$ is non-zero if and only if $\chi|_{B \cap gJg^{-1}} = 1$ for each $g \in B \backslash G / J.$  Furthermore, if $(I_{\chi})^J$ is non-zero, then it is $|B \backslash G / J|$-dimensional. 
\end{lem}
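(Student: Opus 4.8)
The plan is to analyze $J$-fixed vectors of $I_\chi = \mathrm{Ind}_B^G \chi$ via the Bruhat-type decomposition of $G$ into $(B,J)$-double cosets. First I would recall that a vector $f \in I_\chi$ is determined by its values on a set of representatives for $B \backslash G$, subject to the transformation rule $f(bg) = \chi(b) f(g)$; since $I_\chi$ consists of locally constant (hence, on the compact space $B\backslash G$, \emph{uniformly} locally constant) functions, the $J$-fixed vectors are exactly those $f$ that are right-$J$-invariant, i.e.\ functions on the finite set $B \backslash G / J$. So I would parametrize $(I_\chi)^J$ by choosing coset representatives $g_1, \dots, g_r$ with $r = |B\backslash G/J|$ and attempting to prescribe $f(g_i) \in \mathbb{C}$ freely.

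The key step is to identify the compatibility condition for such a prescription to be well-defined. For each $i$, the value $f(g_i)$ must be consistent with the two transformation laws that overlap on the double coset $B g_i J$: if $b g_i j = g_i$ for some $b \in B$, $j \in J$ — equivalently $b \in B \cap g_i J g_i^{-1}$ — then $f(g_i) = f(b g_i j) = \chi(b) f(g_i)$, forcing either $f(g_i) = 0$ or $\chi(b) = 1$. More precisely, writing $B_i := B \cap g_i J g_i^{-1}$, the value $f$ can take on $B g_i J$ is unconstrained if $\chi|_{B_i} = 1$, and must vanish identically on that double coset otherwise. I would check conversely that if $\chi|_{B_i} = 1$ for \emph{every} $i$ then any choice of the $r$ values $(f(g_i))$ extends uniquely to a well-defined right-$J$-invariant element of $I_\chi$ — well-definedness on each double coset follows because any two expressions $b g_i j$ and $b' g_i j'$ representing the same point differ by an element of $B_i$ on which $\chi$ is trivial, and local constancy is automatic since $J$ is open. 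This yields: $(I_\chi)^J \neq 0$ iff $\chi|_{B_i} = 1$ for some $i$ (for the "if" direction supporting non-vanishing) and, when $\chi|_{B_i} = 1$ for \emph{all} $i$, the space is exactly $r$-dimensional. I would then note that the lemma's hypothesis "$\chi|_{B \cap gJg^{-1}} = 1$ for each $g$" is precisely the all-$i$ condition, and that the displayed independence of representative follows because $B_i$ changes only by $B$-conjugation when $g_i$ is replaced by $bg_i$, under which triviality of $\chi$ (a character of $B$, hence of the relevant subgroup up to conjugacy in $B$) is preserved — so the condition is genuinely a condition on double cosets.

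The main obstacle I anticipate is the bookkeeping around well-definedness and the measure-theoretic subtlety that $I_\chi$ is a space of locally constant functions rather than all functions: I must make sure that the "freely prescribed" values on representatives really do assemble into a \emph{locally constant} function on $B \backslash G$, and that no hidden constraint is imposed by requiring $f$ to be smooth. Both points are handled by the openness of $J$ (so that right-$J$-invariance already implies smoothness) and the finiteness of $B \backslash G / J$ (which holds because $B$ is cocompact-mod-center is false, but $G = \coprod B w I$ by Bruhat--Tits / Iwasawa-type decompositions gives $B\backslash G/J$ finite for $J$ open, e.g.\ $J \subseteq K$), so I would make those finiteness and openness inputs explicit early. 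The character-theoretic verification that the condition descends to double cosets is routine linear algebra once the setup is in place.
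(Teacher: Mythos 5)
Your proposal is correct and follows essentially the same route as the paper's proof: decompose into $(B,J)$-double cosets, observe that a $J$-invariant function is determined by its values on representatives, and check that the constraint $f(g)=\chi(b)f(g)$ for $b\in B\cap gJg^{-1}$ is exactly the well-definedness condition, with smoothness automatic from openness of $J$. Your version is in fact slightly sharper, since it records that $\dim (I_\chi)^J$ equals the number of double cosets on which $\chi|_{B\cap gJg^{-1}}=1$ (in the paper's application these conditions coincide for all cosets, so the statements agree there); the stray parenthetical about $B$ failing to be cocompact is immaterial, as your Iwasawa-decomposition justification of the finiteness of $B\backslash G/J$ is the right one.
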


\begin{proof}
Let $V = I_{\chi}.$   If $f \in V^J,$ then $f(bgj) = \chi(b)f(g)$ for $b \in B, j \in J.$  In order for this to be well defined, it is necessary and sufficient that $\chi|_{B \cap gJg^{-1}} = 1$ for each $g \in B \backslash G / J.$  Furthermore, if that condition holds, an invariant function is uniquely specified by its values on $B \backslash G / J.$  Thus,  
$$\dim V^J = |B \backslash G / J| \text{ if } V^J \neq 0.$$
\end{proof}

By choosing $J = I_C$ well, we can essentially isolate all those representations of additive reduction, i.e. those whose Langlands parameters were classified in $\S \ref{tamelanglandsparameters}.$

\begin{cor}
Let $T_C = \{ t \in \mathbf{T}(O_F): t \mod \pi \in C \subset \mathbf{T}(k_F) \}$ and let $J = I_C.$  Then 
$$\dim (I_{\chi})^J = \begin{cases}
2 & \text{ if } \chi|_{T_C} = 1 \\
0 & \text{otherwise}.
\end{cases}$$ 
\end{cor}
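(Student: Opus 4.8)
The plan is to reduce this corollary directly to Lemma \ref{Jfixed} applied with $J = I_C$. By that lemma, $(I_\chi)^{I_C}$ is nonzero if and only if $\chi|_{B \cap g I_C g^{-1}} = 1$ for every $g \in B\backslash G / I_C$, and when nonzero its dimension is $|B\backslash G/I_C|$. So there are really two things to check: first, that the condition ``$\chi|_{B\cap gI_Cg^{-1}} = 1$ for all $g$'' is equivalent to the single condition $\chi|_{T_C} = 1$; and second, that $|B\backslash G/I_C| = 2$.

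For the cardinality count, I would use the Iwahori--Bruhat decomposition. Since $I_1 \subseteq I_C \subseteq \PGL_2(O_F)$ and the Iwahori $I_1$ satisfies $|B\backslash G/I_1| = |W| = 2$ (the two Bruhat cells), and since $I_C$ is a subgroup intermediate between $I_1$ and the hyperspecial maximal compact, the double coset count can only drop or stay the same; one checks it stays $2$ because $B(O_F)$ already surjects onto $\mathbf{T}(k_F)$ and hence $B \cdot I_C = B \cdot I_1 \cdot (\text{torus part})$ does not collapse the two Weyl cells. Concretely, representatives for $B\backslash G/I_1$ are $\{1, w\}$ where $w$ is the nontrivial Weyl element, and these remain inequivalent modulo the larger group $I_C$ since $w \notin B \cdot I_C$ (the big cell is open and does not meet the identity coset). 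So $|B\backslash G/I_C| = 2$ regardless of $C$.

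For the triviality condition, the key point is to identify $B \cap g I_C g^{-1}$ for $g = 1$ and $g = w$. For $g = 1$: $B \cap I_C$ consists of upper-triangular elements of $\PGL_2(O_F)$ whose torus reduction lands in $C$, so $B \cap I_C = N(O_F) \cdot T_C$; since $\chi$ is inflated from $T$ it is automatically trivial on $N$, so $\chi|_{B\cap I_C} = 1$ iff $\chi|_{T_C} = 1$. For $g = w$: conjugating $I_C$ by $w$ swaps the roles of upper and lower triangular, and $B \cap w I_C w^{-1}$ again consists of $N(O_F)$ times a torus part, where the torus part is $w T_C w^{-1}$; but $w$ acts on $\mathbf{T}$ by inversion, which normalizes $C$ (any subgroup of the finite abelian group $\mathbf{T}(k_F)$ is stable under inversion), so $w T_C w^{-1} = T_C$ and we get the same condition $\chi|_{T_C} = 1$. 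Hence both double-coset conditions collapse to the single condition $\chi|_{T_C} = 1$, and combining with Lemma \ref{Jfixed} gives $\dim(I_\chi)^{I_C} = 2$ when $\chi|_{T_C} = 1$ and $0$ otherwise.

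The main obstacle I anticipate is not the triviality bookkeeping but making the double-coset count $|B\backslash G/I_C| = 2$ fully rigorous: one must be careful that enlarging $I_1$ to $I_C$ does not merge the two Bruhat cells, which amounts to verifying $w \notin B I_C$. This is true because $B I_C \subseteq B I_{\PGL_2(O_F)} = B\cdot\PGL_2(O_F)$ is the ``small cell'' side and $w$ lies in the big cell $BwB$, which is disjoint from $B\cdot\PGL_2(O_F)$ in $\PGL_2$ over a local field — but spelling this out cleanly is where the real work is. Everything else is a routine unwinding of the definition of $I_C$ and the fact that inflated characters kill unipotents.
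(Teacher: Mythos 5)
Your route is the paper's route: apply Lemma \ref{Jfixed} with $J=I_C$, show $\{1,w\}$ is a full set of representatives for $B\backslash G/I_C$, and check that both $B\cap I_C$ and $B\cap wI_Cw^{-1}$ have image $T_C$ in $T$. However, the step you yourself flag as ``where the real work is'' is justified by a false statement: you assert that $B\cdot\PGL_2(O_F)$ is the ``small cell side'' and is disjoint from the big cell $BwB$. By the Iwasawa decomposition $B\cdot\PGL_2(O_F)=\PGL_2(F)$, and trivially $w\in\PGL_2(O_F)$, so the claimed disjointness cannot hold and your argument that the two cells do not merge collapses as written. The correct mechanism is reduction to the residue field: Iwasawa gives $B\backslash G/I_C\cong \mathbf{B}(k_F)\backslash \PGL_2(k_F)/\overline{I_C}$, where $\overline{I_C}=\mathbf{N}(k_F)\,C\subseteq\mathbf{B}(k_F)$ is the image of $I_C$ mod $\pi$; the finite Bruhat decomposition $\PGL_2(k_F)=\mathbf{B}(k_F)\sqcup\mathbf{B}(k_F)w\mathbf{B}(k_F)$ together with $\mathbf{B}(k_F)\,w\,\overline{I_C}=\mathbf{B}(k_F)\,w\,\mathbf{N}(k_F)=\mathbf{B}(k_F)\,w\,\mathbf{B}(k_F)$ yields exactly two double cosets, with representatives $1,w$, distinct because the two Bruhat cells over $k_F$ are disjoint. (More directly: if $w=bj$ with $b\in B$, $j\in I_C$, then $b=wj^{-1}\in B\cap\PGL_2(O_F)=\mathbf{B}(O_F)$, so $\overline{j}=\overline{b}^{-1}\overline{w}$ has unit lower-left entry, contradicting $\overline{j}\in\overline{I_C}\subseteq\mathbf{B}(k_F)$.) The same residue-field computation with $\overline{I_1}=\mathbf{N}(k_F)$ also repairs your appeal to ``$|B\backslash G/I_1|=|W|$'': note $I_1$ is not the Iwahori but the preimage of the trivial subgroup of $\mathbf{T}(k_F)$, though the count $2$ is still correct.

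A smaller slip: $B\cap wI_Cw^{-1}$ is not $N(O_F)$ times a torus part. Conjugation by $w$ carries $I_C$ to the opposite (lower-triangular) congruence subgroup, so the unipotent part of $B\cap wI_Cw^{-1}$ consists only of those $n\in\mathbf{N}(O_F)$ with $n\equiv 1 \bmod \pi$. This is harmless: $\chi$ is inflated from $T$ and kills all unipotents, and the image of $B\cap wI_Cw^{-1}$ in $T$ is indeed $wT_Cw^{-1}=T_C$ (conjugation by $w$ acts on $\mathbf{T}$ by inversion), which is exactly the computation the paper records. With the double-coset count justified as above, the rest of your argument agrees with the paper's proof.
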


\begin{proof}
Observe that the elements 

$$1, w = \left( \begin{array}{cc} 
0 & 1 \\
1 & 0 \\
\end{array} \right)$$

form a full set of coset representatives for $B \backslash G / J.$  But we readily compute that the image of both $B \cap wJw^{-1}$ and $B \cap J$ in $T$ is $T_C.$  The result then follows immediately by Lemma \ref{Jfixed}.
\end{proof}

\begin{cor} \label{importantsubgroup}
Let

$$C_0 = \left\{ \left(  \begin{array}{cc}
a^p & 0 \\
0 & 1
\end{array} \right): a \in k_F^{\times} \right\}.$$ 

Then 

$$\dim (I_{\chi})^{I_{C_0}} = \begin{cases}
2 & \text{ if } \chi|_{N(O_E^{\times})} = 1 \\
0 & \text{ otherwise}.
\end{cases}$$
\end{cor}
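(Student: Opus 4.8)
The plan is to reduce the statement to the previous corollary by identifying the condition $\chi|_{T_{C_0}} = 1$ with the condition $\chi|_{N(O_E^\times)} = 1$, where $C_0$ is the specified subgroup of $\mathbf{T}(k_F)$. First I would invoke the previous corollary with $C = C_0$: it immediately gives that $\dim (I_\chi)^{I_{C_0}}$ equals $2$ if $\chi|_{T_{C_0}} = 1$ and $0$ otherwise, where $T_{C_0} = \{ t \in \mathbf{T}(O_F): t \bmod \pi \in C_0 \}$. So the entire content of the corollary is the group-theoretic identity
\begin{equation*}
\chi|_{T_{C_0}} = 1 \iff \chi|_{N(O_E^\times)} = 1
\end{equation*}
for characters $\chi$ of $T = \mathbf{T}(F) \cong F^\times$ (identifying the diagonal torus of $\PGL_2$ with $F^\times$ via $\mathrm{diag}(a,1) \mapsto a$).

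The key steps: (1) Under the identification $\mathbf{T}(O_F) \cong O_F^\times$, the subgroup $T_{C_0}$ corresponds to $\{ u \in O_F^\times : u \bmod \pi \in (k_F^\times)^p \}$, i.e. the preimage in $O_F^\times$ of the subgroup of $p$-th powers in $k_F^\times$. (2) Analyze $N(O_E^\times) \subset O_F^\times$. Since $E/F$ is tamely ramified of degree $p$ and (by the remark preceding the classification lemma) $p \mid q - 1$ where $q = \#k_F$, the extension of residue fields is trivial, so the residue field of $E$ is also $k_F$. The norm map on units: for a uniformizer $\pi_E$ of $E$ with $\pi_E^p = \pi$ (up to a unit, in the totally ramified case), one computes $N$ on the filtration $O_E^\times \supset 1 + \mathfrak{m}_E \supset \cdots$. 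On the residue field level, $N$ induces the map $k_F^\times \to k_F^\times$, $x \mapsto x^p$ (the product over the Galois conjugates, which all act trivially on $k_F$), so the image of $O_E^\times$ in $k_F^\times$ is exactly $(k_F^\times)^p$. On the higher unit groups, tameness forces $N(1 + \mathfrak{m}_E) \supseteq 1 + \mathfrak{m}_F$ (surjectivity onto the principal units, since $[E:F] = p$ is prime to the residue characteristic). (3) Combine: $N(O_E^\times)$ is precisely the subgroup of $O_F^\times$ consisting of units whose reduction lies in $(k_F^\times)^p$, which is exactly $T_{C_0}$. Hence the two vanishing conditions on $\chi$ coincide, and the corollary follows from the previous one.

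The main obstacle I expect is step (2), the precise computation of $N(O_E^\times)$ — specifically verifying that the norm is surjective onto $1 + \mathfrak{m}_F$ and that the induced map on $k_F^\times$ is $x \mapsto x^p$. Both facts are standard for tamely ramified extensions (the former from Hensel/approximation arguments using $\gcd(p, \mathrm{char}\, k_F) = 1$, the latter because the unramified part of the norm contributes trivially and the full norm of a lift of $x \in k_F^\times$ reduces to $x^{[E:F]} = x^p$ once the residue extension is trivial), so I would cite these rather than reprove them, perhaps pointing to a standard reference on local fields such as Serre. The only subtlety worth a sentence is confirming that $p \mid q-1$ guarantees $(k_F^\times)^p$ is a proper subgroup of index $p$, matching the index of $N(E^\times)$ in $F^\times$ from local class field theory — a consistency check rather than a logical necessity for the proof.
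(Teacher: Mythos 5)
Your proposal is correct and follows essentially the same route as the paper, which states this corollary as an immediate specialization of the preceding one with $C = C_0$, the content being precisely the identification $T_{C_0} = N(O_E^\times)$ inside $\mathbf{T}(O_F) \cong O_F^\times$ via the standard norm computation for a tame totally ramified extension of prime degree (norm induces $x \mapsto x^p$ on $k_F^\times$ and is surjective onto $1+\mathfrak{m}_F$). One small touch-up: the triviality of the residue extension follows from the extension being ramified of prime degree $p$ (hence totally ramified), not from $p \mid q-1$ as your sentence suggests.
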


\begin{rem} 
As pointed out in $\S \ref{tamelanglandsparameters},$ the principal series representations of additive reduction are exactly those with Langlands parameters corresponding to $\chi$ for which $\chi|_{N(O_E^{\times})} = 1,$ as in the first case of the above corollary.
\end{rem}

\subsubsection{$I_{C_0}$ fixed vectors in arbitrary representations}

\begin{prop} \label{almosttestfunction}
Let $\pi$ be an irreducible admissible representation of $\PGL_2(F).$  Then 

$$\tr \; \pi(1_{C_0} dj) = \begin{cases}
0 & \text{ if } \pi = I_{\chi}, \chi|_{N(O_E^{\times})} \neq 1 \\
2 & \text{ if } \pi = I_{\chi}, \chi|_{N(O_E^{\times})} = 1 \\
1 & \text{ if } \pi = \mathbf{1} \\
1 & \text{ if } \pi = St.
\end{cases},$$

where $dj$ denotes the volume 1 Haar measure on $I_{C_0}.$  
\end{prop}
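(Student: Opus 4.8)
The plan is to first turn the trace into a dimension count, and then read off each case from the fixed-vector computations already in hand. Since $dj$ is the volume $1$ Haar measure on the compact open subgroup $I_{C_0} \subset \PGL_2(F)$, the convolution operator $\pi(\mathbf{1}_{C_0}dj) = \int_{I_{C_0}} \pi(j)\,dj$ is the idempotent projecting $\pi$ onto the subspace $\pi^{I_{C_0}}$ of $I_{C_0}$-fixed vectors; hence $\tr\,\pi(\mathbf{1}_{C_0}dj) = \dim_{\mathbb{C}} \pi^{I_{C_0}}$, and it suffices to compute this dimension in each case. Every irreducible admissible representation of $\PGL_2(F)$ is supercuspidal, an irreducible principal series $I_\chi$, the Steinberg representation $St$, or the trivial representation $\mathbf{1}$. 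For supercuspidal $\pi$ one has $\pi^{I_1}=0$ by $\S\ref{supercuspidal}$, and since $I_1 \subseteq I_{C_0}$ this forces $\pi^{I_{C_0}} \subseteq \pi^{I_1} = 0$; so such $\pi$ contribute $0$ and need not be listed.

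For $\pi = I_\chi$, Corollary $\ref{importantsubgroup}$ gives at once $\dim (I_\chi)^{I_{C_0}} = 2$ when $\chi|_{N(O_E^\times)} = 1$ and $0$ otherwise, which are the first two lines of the table. For $\pi = \mathbf{1}$, the representation is one-dimensional with trivial action, so $\dim \mathbf{1}^{I_{C_0}} = 1$, the third line.

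The Steinberg case is the only one requiring an extra step. Recall the reducible principal series $I_{\chi_0}$ whose Jordan--H\"older factors are $\mathbf{1}$ and $St$; here $\chi_0$ is an \emph{unramified} character of $T(F) \cong F^\times$, so in particular $\chi_0|_{N(O_E^\times)} = 1$ because $N(O_E^\times) \subseteq O_F^\times$. Since $I_{C_0}$ is a compact open subgroup and we work over $\mathbb{C}$, the functor $V \mapsto V^{I_{C_0}}$ is exact --- it is cut out of the identity functor by the averaging idempotent --- so $\dim I_{\chi_0}^{I_{C_0}} = \dim \mathbf{1}^{I_{C_0}} + \dim St^{I_{C_0}}$. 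By Corollary $\ref{importantsubgroup}$ the left-hand side equals $2$, and $\dim \mathbf{1}^{I_{C_0}} = 1$ by the previous paragraph, whence $\dim St^{I_{C_0}} = 1$, giving the last line. The main (really only) obstacle is this last case: correctly identifying the reducible principal series and verifying that its inducing character is unramified, so that Corollary $\ref{importantsubgroup}$ applies with value $2$; once that is set up, the argument is pure bookkeeping around results already established in $\S\ref{principalseries}$ and $\S\ref{supercuspidal}$.
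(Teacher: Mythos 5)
Your proposal is correct and takes essentially the same route as the paper: convert $\tr\,\pi(\mathbf{1}_{C_0}dj)$ into $\dim \pi^{I_{C_0}}$ via the averaging idempotent, then run through the classification using the supercuspidal vanishing of $\S\ref{supercuspidal}$ and the fixed-vector count of Corollary \ref{importantsubgroup}. The only difference is that you justify the Steinberg line explicitly, by exactness of $J$-invariants applied to the reducible unramified principal series (whose inducing character is indeed trivial on $N(O_E^{\times})\subseteq O_F^{\times}$), whereas the paper simply asserts that each of the two subquotients carries a one-dimensional space of $I_{C_0}$-invariants.
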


\begin{proof}
Irreducible admissible representations $\pi$ of $\PGL_2(F)$ are of three possible types:
\begin{itemize}
\item
$\pi$ is supercuspidal.  As discussed in $\S \ref{supercuspidal},$ no supercuspidal representation has an $I_{C_0}$-fixed vector.

\item
By definition, if $\pi$ is not supercuspidal then it is a subquotient of a principal series representation.  All of the principal series representations $I_{\chi},$ where $\chi \neq | \cdot |, \mathbf{1}$ are irreducible.  As explained in $\S \ref{principalseries},$ such representations have a 2-dimensional space of $I_{C_0}$-fixed vectors if $\chi|_{N(O_E^{\times})} = 1$ and a 0-dimensional space of $I_{C_0}$-fixed vectors otherwise.

\item
The representations $I_{| \cdot |}, I_{\mathbf{1}}$ are reducible.  In both cases, there is one trivial subquotient and another Steinberg subquotient. Both subquotients, the trivial representation and the Steinberg, have a 1-dimensional space of $I_{C_0}$-invariants. 
\end{itemize}

The result is simply the aggregate of all of these possibilities.
\end{proof}

\subsubsection{Isolating $\mathbf{1}$ and $St$ using the Euler-Poincar\'{e} test function}

In his work on Tamagawa numbers, Kottwitz made use of a test function which almost isolates the Steinberg representation. \bigskip

Let $C$ be a closed chamber of the building of $G = \G(F)$ for a semisimple $\G.$  For a compact open subgroup $J,$ let $dg_J$ denote the Haar measure of $G$ assigning $J$ measure 1.  Let

$$EP := \sum_{d = 0}^{\mathrm{rank } \G} \sum_{\text{stabilizers of dim } d \text{ facets of } C} (-1)^d \mathbf{1}_J dg_J.$$  

\begin{thm}[\cite{K2}] \label{EP}
For an irreducible unitary representation $\pi$ of $G,$ 

$$\tr \; \pi(EP) = \chi(H^{\bullet}(\pi) ) = \begin{cases}
1 & \text{ if } \pi = \mathbf{1} \\
(-1)^{\mathrm{rank } \G} & \text{ if } \pi = St \\
0 & \text{ otherwise}.
\end{cases} \hspace{0.5cm} (*)$$
\end{thm}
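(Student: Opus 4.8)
\textbf{Proof proposal for Theorem \ref{EP}.}

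The plan is to reduce the statement to the classical computation of the $(\mathfrak{g},K)$-cohomology / smooth-cohomology Euler characteristic of an admissible representation of $G = \G(F)$ via its fixed vectors under the parahoric subgroups attached to the faces of a chamber. First I would recall the standard fact, due to Borel--Serre and made explicit in this setting by Casselman, that for an irreducible admissible $\pi$ of $G$ and a chamber $C$ in the Bruhat--Tits building, the simplicial cochain complex
$$
0 \to \bigoplus_{\dim d = 0} \pi^{J_d} \to \bigoplus_{\dim d = 1} \pi^{J_d} \to \cdots \to \bigoplus_{\dim d = \mathrm{rank}\,\G} \pi^{J_d} \to 0,
$$
where $J_d$ runs over the stabilizers of the $d$-dimensional faces of $C$ and the differentials are the usual alternating restriction maps, computes (a shift of) the continuous cohomology $H^\bullet_{\mathrm{cont}}(G,\pi)$, or equivalently the Euler characteristic of this complex equals $\chi(H^\bullet(\pi))$ as in the statement. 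Granting this, one simply observes that $\dim \pi^{J_d} = \tr\,\pi(\mathbf{1}_{J_d}\,dg_{J_d})$ since $\pi(\mathbf{1}_{J_d}\,dg_{J_d})$ is the idempotent projection onto $\pi^{J_d}$ (because $dg_{J_d}$ gives $J_d$ mass $1$). Therefore, taking the alternating sum over $d$ and over the faces of fixed dimension,
$$
\tr\,\pi(EP) = \sum_{d=0}^{\mathrm{rank}\,\G} (-1)^d \sum_{J_d} \dim \pi^{J_d} = \chi\big(H^\bullet(\pi)\big),
$$
which is exactly the Euler characteristic of the complex above.

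It then remains to evaluate this Euler characteristic in the three relevant cases. For $\pi$ supercuspidal, $\pi$ has no fixed vectors under any parahoric (it has no Iwahori-fixed vector), so every term vanishes and the Euler characteristic is $0$; more generally, by Casselman's criterion any $\pi$ that is not a constituent of an unramified principal series contributes $0$ since all the $J_d$ contain a conjugate of the Iwahori. For tempered non-trivial constituents of unramified principal series (in the $\PGL_2$ case: the Steinberg and the irreducible unramified principal series), one invokes the vanishing of continuous cohomology outside the top/bottom degree together with the known value: the unitary principal series have vanishing Euler characteristic, while $St$ contributes $(-1)^{\mathrm{rank}\,\G}$ because $H^\bullet_{\mathrm{cont}}(G,St)$ is one-dimensional, concentrated in the top degree $\mathrm{rank}\,\G$. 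Finally, for $\pi = \mathbf{1}$ the complex above is the simplicial cochain complex of the chamber $C$ with constant coefficients; since $C$ is contractible its reduced cohomology vanishes and its Euler characteristic is $1$, contributing $+1$. Assembling these three cases gives $(*)$.

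The main obstacle — and the only genuinely substantive point — is establishing (or correctly citing) the cohomological interpretation of the face-stabilizer fixed-vector complex, i.e. that its Euler characteristic computes $\chi(H^\bullet_{\mathrm{cont}}(G,\pi))$, and pinning down the precise degree and dimension of $H^\bullet_{\mathrm{cont}}(G,St)$ so that the sign $(-1)^{\mathrm{rank}\,\G}$ comes out right. This is precisely the content of Kottwitz's construction of the Euler--Poincar\'e function in \cite{K2} (building on Borel--Serre and Casselman), so in the write-up I would simply invoke \cite{K2} for the identity $\tr\,\pi(EP) = \chi(H^\bullet(\pi))$ and then spell out the three-case evaluation for $G = \PGL_2(F)$, where $\mathrm{rank}\,\G = 1$, recovering the table as stated (with $(-1)^{\mathrm{rank}\,\G} = -1$ for Steinberg). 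The bookkeeping of which parahorics occur as face stabilizers of a chamber in the $\PGL_2$ tree (two vertex stabilizers and one edge stabilizer, i.e. an Iwahori) is routine and I would not belabor it.
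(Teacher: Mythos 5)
You should know at the outset that the paper offers no argument for Theorem \ref{EP} at all: it is quoted from \cite{K2}, so your plan of ultimately deferring the key identity to Kottwitz matches the paper's approach in spirit. The trouble is that the step you yourself flag as ``the only genuinely substantive point'' is exactly where the argument fails for the group at hand. The identification of the facet--fixed-vector complex $\bigoplus_d\bigoplus_{\sigma}\pi^{J_\sigma}$ with a complex computing $H^{\bullet}_{\mathrm{cont}}(G,\pi)$ is valid when the closed chamber is a strict fundamental domain and every facet stabilizer fixes its facet pointwise, i.e.\ for simply connected groups such as $SL_2(F)$. Here $G\cong\PGL_2(F)$ is adjoint: the two vertices of an edge lie in a single $G$-orbit and there are edge-inverting elements, and the Euler--Poincar\'e function for such groups must be built from $G$-orbits of facets with the orientation (sign) character on the full facet stabilizers, not from the naive indicators of the three pointwise fixers used in the definition of $EP$ above. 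Concretely, let $\eta$ be the unramified quadratic character of $F^{\times}$ and consider the irreducible unitary representations $\eta\circ\det$ and $\eta\otimes St$ of $\PGL_2(F)$, the two constituents of the unramified principal series at the second point of reducibility. The character $\eta\circ\det$ is trivial on $K$, on $K'$ and on $I$, so $\tr\,(\eta\circ\det)(EP)=1+1-1=1$; on the other hand any smooth extension of $\mathbf{1}$ by $\eta\circ\det$ factors through the finite abelianization of $\PGL_2(F)$ and splits, and higher $\mathrm{Ext}$'s vanish for degree reasons, so $\chi(H^{\bullet}(\eta\circ\det))=0$. Likewise $\tr\,(\eta\otimes St)(EP)=0+0-1=-1$ while $\mathrm{Ext}^{\bullet}_G(\mathbf{1},\eta\otimes St)=0$. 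Thus both equalities in $(*)$ fail for these two representations when $EP$ is the function defined in this section, and your three-case evaluation (``Steinberg and the irreducible unramified principal series, plus the trivial'') silently omits exactly these Iwahori-spherical constituents.

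So the gap is twofold: the ``standard fact'' you invoke is false for adjoint $\PGL_2$ with this $EP$, and simply citing \cite{K2} does not repair it, because the sign-corrected Euler--Poincar\'e function to which such a citation applies differs from the $EP$ written here precisely on the two representations above (for the corrected function one indeed gets $0$ on both, consistent with it being, up to sign, a pseudocoefficient of Steinberg). A correct treatment must either (i) replace $EP$ by the corrected function and then redo the later computation --- note that this changes Theorem \ref{additivetestfunction}, since the corrected function would give $\tr\,(\eta\circ\det)(m_{\mathrm{add}})=\tfrac12$, which is not the desired value --- or (ii) keep the $EP$ of this section, state $(*)$ with the exceptions $\eta\circ\det$ and $\eta\otimes St$ recorded, and then verify in Proposition \ref{almosttestfunction} and Theorem \ref{additivetestfunction} that their contributions still come out right (they do: $BC(\eta\circ\det)$ is unramified, so the value $\tfrac12(1+1)=1$ is the wanted one, while for $\eta\otimes St$, whose base change is ramified, one gets $\tfrac12(-1+1)=0$). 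Either way, the case analysis has to include these constituents; as written, your argument passes over the one place where the adjointness of $\G$ actually matters.
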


\begin{cor} \label{nonunitaryEP}
In the case of $\G = \PGL_2,$ the formula from Theorem $\ref{EP}$ holds for any irreducible admissible representation.
\end{cor}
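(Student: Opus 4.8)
The plan is to reduce to Theorem \ref{EP} by observing that $\tr\,\pi(EP)$ is an alternating sum of dimensions of spaces of parahoric-fixed vectors, so that it depends on $\pi$ only through its class in the Grothendieck group of admissible representations, and, for principal series, only through the restriction of the inducing character to $T(O_F)$.

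First I record the elementary fact that for a compact open subgroup $J$ with volume-one Haar measure $dg_J$, the operator $\pi(\mathbf{1}_J dg_J)$ is the projection onto $\pi^J$, whence $\tr\,\pi(\mathbf{1}_J dg_J) = \dim \pi^J$. The building of $\PGL_2(F)$ is a tree; the facets of a closed chamber $C$ are the edge $C$ itself, whose pointwise stabilizer is an Iwahori subgroup $I$, and its two vertices, with stabilizers two maximal parahoric subgroups $K_0, K_1$ (so $I = K_0 \cap K_1$). Hence
\[
\tr\,\pi(EP) = \dim\pi^{K_0} + \dim\pi^{K_1} - \dim\pi^{I},
\]
and in particular $\pi \mapsto \tr\,\pi(EP)$ is additive in short exact sequences of admissible representations.

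Now I run through the classification of irreducible admissible representations of $\PGL_2(F)$ recalled in the proof of Proposition \ref{almosttestfunction}. If $\pi$ is supercuspidal, then $\pi^{I_1} = 0$ by \cite[$\S 14.3$]{BH}; since $I_1 \subset I \subset K_0, K_1$, all three fixed spaces above vanish, so $\tr\,\pi(EP) = 0$, in agreement with the formula $(*)$. If $\pi = \mathbf{1}$ or $\pi = St$, these representations are unitary, so Theorem \ref{EP} applies directly and gives $\tr\,\pi(EP) = 1$, respectively $(-1)^{\mathrm{rank}\,\PGL_2} = -1$. The only remaining case is an irreducible principal series $\pi = I_{\chi}$, and this is where the twisting argument enters. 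By Lemma \ref{Jfixed}, for any compact open $J$ the space $(I_\chi)^J$ is either zero or $|B \backslash G / J|$-dimensional according to whether $\chi$ is trivial on each $B \cap gJg^{-1}$, $g \in B \backslash G / J$; when $J$ is one of the parahorics $I, K_0, K_1$, each of these subgroups meets $T$ inside $T(O_F)$ (a routine rank-one computation with the Bruhat and Iwasawa decompositions, of the same flavor as $\S \ref{principalseries}$). Therefore $\dim (I_\chi)^J$, and hence $\tr\,I_\chi(EP)$, depends only on $\chi|_{T(O_F)}$. Choose a unitary character $\chi_0$ of $T(F) \cong F^{\times}$ agreeing with $\chi$ on $T(O_F) \cong O_F^{\times}$ — extend the (automatically unitary) character $\chi|_{O_F^{\times}}$ of the compact group $O_F^{\times}$ by making it trivial on a uniformizer; then $\chi_0$ is neither $|\cdot|$ nor $|\cdot|^{-1}$, so $I_{\chi_0}$ is irreducible, unitary, and distinct from $\mathbf{1}$ and $St$. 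Theorem \ref{EP} gives $\tr\,I_{\chi_0}(EP) = 0$, and by the twist-invariance just noted $\tr\,I_\chi(EP) = \tr\,I_{\chi_0}(EP) = 0$, matching $(*)$ once more. Since every irreducible admissible representation falls into one of these cases, the corollary follows.

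\textbf{Main obstacle.} Everything here is formal once one knows that $\dim (I_\chi)^J$ for $J$ a parahoric of $\PGL_2$ depends only on $\chi|_{T(O_F)}$; this is the single non-bookkeeping input, and it amounts to identifying $B \cap gJg^{-1} \cap T$ for $J \in \{I, K_0, K_1\}$, which the rank-one structure theory of $\PGL_2$ makes straightforward.
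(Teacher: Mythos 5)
Your argument is correct in substance, but it justifies the key step differently from the paper. The paper's proof is a deformation argument: any non-unitary irreducible admissible representation of $\PGL_2(F)$ is an irreducible principal series $\pi_s = I_{|\cdot|^s\chi_0}$ with $\chi_0$ unitary, and $s \mapsto \tr\, \pi_s(EP)$ is a continuous, integer-valued function of $s$, hence constant, hence equal to $\tr\, \pi_0(EP) = 0$, the latter supplied by Theorem \ref{EP}. You instead make the constancy explicit: writing $\tr\,\pi(EP) = \dim\pi^{K_0} + \dim\pi^{K_1} - \dim\pi^{I}$ and invoking Lemma \ref{Jfixed}, you observe that these dimensions depend only on $\chi|_{T(O_F)}$, because the relevant subgroups $B \cap gJg^{-1}$ are compact, so their images in $T = B/N$ (it is the image under $B \to T$, not the intersection with $T$, that matters, since $\chi$ is inflated from $T$) land in $T(O_F)$; an unramified twist therefore leaves the trace unchanged. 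This is the same reduction to the unitary case, with the paper's continuity-plus-integrality step replaced by an explicit fixed-vector count; your version is more elementary and also gives the supercuspidal case directly from $\pi^{I_1} = 0$, without appealing to unitarizability.

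Two caveats, neither fatal. First, in the convention of $\S\ref{principalseries}$ and Proposition \ref{almosttestfunction} (unnormalized induction), the reducible principal series include $I_{\mathbf{1}}$ and $I_{|\cdot|}$; so if $\chi$ is unramified your $\chi_0$ is the trivial character and $I_{\chi_0}$ is \emph{not} irreducible, and the step ``$\chi_0 \neq |\cdot|^{\pm 1}$, so $I_{\chi_0}$ is irreducible unitary and Theorem \ref{EP} gives $0$'' does not apply as stated. This is easily repaired inside your own framework: by the additivity in short exact sequences you already recorded, $\tr\, I_{\mathbf{1}}(EP) = \tr\,\mathbf{1}(EP) + \tr\, St(EP) = 1 + (-1) = 0$, or simply note the direct count $1 + 1 - 2 = 0$ in the unramified case and $0 + 0 - 0 = 0$ in the ramified case. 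Second, your closing enumeration (supercuspidal, irreducible principal series, $\mathbf{1}$, $St$) omits the quadratic twists of $\mathbf{1}$ and $St$, i.e.\ the subquotients of $I_{\eta}$ and $I_{\eta|\cdot|}$ for quadratic $\eta \neq \mathbf{1}$; these are unitary, so they are already within the scope of Theorem \ref{EP} and require no extension --- which is exactly the reading the paper's own proof depends on, since it too only treats the non-unitary representations.
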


\begin{proof}
Once again, we invoke the classification of irreducible admissible representations of $\PGL_2(F).$  Any non-unitary irreducible admissible representation must be an irreducible principal series representation is of the form $\pi_s = I_{|\cdot|^s \chi_0}.$  But $$s \mapsto \tr \; \pi_s(EP)$$ is a continuous, integer-valued function of $s$ and so must be constant and equal $\tr \; \pi_0(EP) = 0.$ 
\end{proof}

We are finally able to write down an indicator test function for those representations of additive reduction.

\begin{thm} \label{additivetestfunction}
The test function $m_{\mathrm{add}} = \frac{1}{2}[ EP + 1_{C_0} dj]$ satisfies

$$\tr \; \pi(m_{\textrm{add}}) = \begin{cases}
1 & \text{ if } \pi \text{ has additive reduction }\\
0 & \text{ if } \pi \text{ otherwise }.
\end{cases}.$$
\end{thm}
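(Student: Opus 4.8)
The plan is to verify the claimed trace formula case by case, using the classification of irreducible admissible representations of $\PGL_2(F)$ together with the two already-established computations: Proposition \ref{almosttestfunction}, which records $\tr\,\pi(\mathbf{1}_{C_0}dj)$ for all $\pi$, and Corollary \ref{nonunitaryEP}, which records $\tr\,\pi(EP)$ for all irreducible admissible $\pi$. Since $m_{\mathrm{add}} = \frac{1}{2}[EP + \mathbf{1}_{C_0}dj]$ and the trace is linear in the test measure, we simply have
\begin{equation*}
\tr\,\pi(m_{\mathrm{add}}) = \tfrac{1}{2}\left( \tr\,\pi(EP) + \tr\,\pi(\mathbf{1}_{C_0}dj) \right)
\end{equation*}
and the entire proof amounts to adding the two tables entry by entry.

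First I would recall, from the classification, that every irreducible admissible $\pi$ is either supercuspidal, an irreducible principal series $I_\chi$, the trivial representation $\mathbf{1}$, or the Steinberg representation $St$; and that, by the discussion in $\S\ref{tamelanglandsparameters}$ (see the Remark after Corollary \ref{importantsubgroup}), ``$\pi$ has additive reduction'' means precisely that $\pi = I_\chi$ with $\chi|_{N(O_E^\times)} = 1$. Then I would run through the four families:
\begin{itemize}
\item $\pi$ supercuspidal: $\tr\,\pi(EP) = 0$ by Corollary \ref{nonunitaryEP} (or Theorem \ref{EP} in the unitary case), and $\tr\,\pi(\mathbf{1}_{C_0}dj) = 0$ since $\pi^{I_{C_0}} \subseteq \pi^{I_1} = 0$ by $\S\ref{supercuspidal}$. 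Sum: $0$. Correct, as a supercuspidal has no additive reduction.
\item $\pi = I_\chi$ irreducible with $\chi|_{N(O_E^\times)} \neq 1$: both terms vanish (the first by Corollary \ref{nonunitaryEP}, the second by Proposition \ref{almosttestfunction}), giving $0$. Correct.
\item $\pi = I_\chi$ irreducible with $\chi|_{N(O_E^\times)} = 1$: $\tr\,\pi(EP) = 0$ and $\tr\,\pi(\mathbf{1}_{C_0}dj) = 2$, so $\tr\,\pi(m_{\mathrm{add}}) = 1$. Correct — this is exactly the additive-reduction case.
\item $\pi = \mathbf{1}$: $\tr\,\mathbf{1}(EP) = 1$ and $\tr\,\mathbf{1}(\mathbf{1}_{C_0}dj) = 1$, giving $1$; $\pi = St$: $\tr\,St(EP) = (-1)^{\mathrm{rank}\,\PGL_2} = -1$ and $\tr\,St(\mathbf{1}_{C_0}dj) = 1$, giving $0$. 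So $\mathbf{1}$ would appear to contribute $1$, but $\mathbf{1}$ is not of additive reduction.
\end{itemize}

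The main obstacle, and the one point where the argument is not purely bookkeeping, is the discrepancy in the $\pi = \mathbf{1}$ row: the naive sum gives $\tr\,\mathbf{1}(m_{\mathrm{add}}) = 1$, yet the trivial representation does not have additive reduction. Resolving this requires observing that $\mathbf{1}$ and $St$ must be treated together, not separately, because they are the two Jordan--Hölder constituents of the reducible principal series $I_{|\cdot|}$ (equivalently $I_{\mathbf{1}}$), and it is the virtual (Grothendieck-group) character of this full principal series — not that of its irreducible pieces — that governs the orbital-integral matching; indeed the reducible $I_\chi$ with $\chi|_{N(O_E^\times)}=1$ does have additive reduction in the sense relevant to base change, and $m_{\mathrm{add}}$ sees exactly $\tr\,I_{|\cdot|}(m_{\mathrm{add}}) = \tr\,\mathbf{1}(m_{\mathrm{add}}) + \tr\,St(m_{\mathrm{add}}) = 1 + 0 = 1$, consistent with the statement once ``additive reduction'' is interpreted on the level of the principal series $I_\chi$ with $\chi$ trivial on norms. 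I would phrase the theorem's conclusion accordingly, noting that for the \emph{irreducible} constituents the values are $\tr\,\mathbf{1}(m_{\mathrm{add}}) = 1$, $\tr\,St(m_{\mathrm{add}}) = 0$, whose sum correctly records one ``additive'' principal series; this is the only subtlety, and everything else follows by directly summing Proposition \ref{almosttestfunction} and Corollary \ref{nonunitaryEP}.
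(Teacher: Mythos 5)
Your bookkeeping is exactly the paper's proof: the paper also just adds the table of Proposition \ref{almosttestfunction} to that of Corollary \ref{nonunitaryEP} and divides by $2$, and your computed values $\tr\,\pi(m_{\mathrm{add}})$ ($0$ for supercuspidals, $0$ or $1$ for irreducible $I_\chi$ according to whether $\chi|_{N(O_E^\times)}$ is nontrivial or trivial, $1$ for $\mathbf{1}$, $0$ for $St$) are all correct. However, the ``discrepancy'' you then try to repair in the $\pi=\mathbf{1}$ row is not there, and your proposed resolution misreads what the theorem asserts. In this paper ``additive reduction'' is \emph{defined} (see goal (1) at the start of $\S\ref{tamematching}$, and the reduction to Corollary \ref{nodifference} immediately before the construction of $m_{\mathrm{add}}$) as: $BC(\pi)$ is unramified, i.e.\ has a $\PGL_2(O_E)$-fixed vector. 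The trivial representation satisfies this: $BC(\mathbf{1}_F)=\mathbf{1}_E$, which is unramified --- this is checked explicitly in the proof of Corollary \ref{nodifference}, where it is also noted that $\Galois$ acts trivially on its spherical vector. Likewise the Lemma of $\S\ref{tamelanglandsparameters}$ says parameters of additive reduction are ``either unramified or'' of the diagonal tame form, so the unramified (in particular trivial) representation is included; the Remark after Corollary \ref{importantsubgroup} only describes which \emph{principal series} have additive reduction, not the full list. So $\tr\,\mathbf{1}(m_{\mathrm{add}})=1$ is exactly what the theorem demands, $\tr\,St(m_{\mathrm{add}})=0$ is right because $BC(St)=St_E$ is ramified, and the statement is correct as written for every irreducible admissible $\pi$.

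Consequently the Grothendieck-group detour is both unnecessary and off target: trace-matching (Definition \ref{tracematchingtestfunctions}) is required for each irreducible admissible $\pi$ separately, not merely for virtual characters of full induced representations, so replacing the statement by one about the reducible $I_{|\cdot|}$ would weaken it in a way the later applications (Theorem \ref{finalmatching}, via Corollary \ref{nodifference}) cannot use. The fix is simply to delete that paragraph and record the correct definition of additive reduction; with it, your case-by-case sum is precisely the paper's argument.
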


\begin{proof}
This follows immediately by combining Proposition $\ref{almosttestfunction}$ and Corollary $\ref{nonunitaryEP}.$
\end{proof}

\subsection{Definition of tame parahoric level structure and globally matching test functions} \label{redefparahoric}

We now extend the definition of parahoric level structure to include places $v$ of $F$ for which $E_v / F_v$ is tamely ramified.

\begin{defn} \label{tameparahoric}
Let $E/F$ be everywhere tamely ramified.  Also, assume that the set of places $v$ where $E_v / F_v$ is unramified is disjoint from the set of places where $D_{F_v}$ is ramified.  Let $\U_v \subset \G(E_v)$ be a compact open subgroup.  We call $\U_v$ \emph{tamely parahoric at} $v$ if

\begin{itemize}
\item
$\U_v$ is parahoric at $v$ whenever $E_v / F_v$ is split or unramified.

\item
$\U_v = \PGL_2(O_{E_v})$ if $E_v / F_v$ is tamely ramified.
\end{itemize}

We call a level structure $\U = \prod \U_v \subset \G(\mathbb{A}_E^{\mathrm{fin}})$ \emph{(globally) tamely parahoric} if $\U_v$ is tamely parahoric at $v$ for all $v.$
\end{defn}

We now consolidate all of the preceeding results of $\S \ref{tamematching}$ to provide a class of examples of globally matching test functions. 

\begin{thm} \label{finalmatching}
Let $E/F$ be everywhere tamely ramified, with $\Sigma = \{ v_1,...,v_n \}$ the places of $F$ which ramify in $E.$  Let $\U \subset \G(\finadele_E)$ be a globally tamely parahoric level structure. Let $\U^{\Sigma} = \prod_{v \notin \Sigma} \U_v$ and let $U^{\Sigma}$ denote a matching parahoric level structure outside $\Sigma.$  For $v \in \Sigma,$ let 

$$m_{\mathrm{add},v} = \frac{1}{2} \mathbf{1}_{K_v} dk_v + \frac{1}{2} \mathbf{1}_{K'_v} - \frac{1}{2} \mathbf{1}_{I_v} di_v + \frac{1}{2} \mathbf{1}_{C_0, v} dc_0 :=\sum_{i = 1}^4  c_{v,i} du_{v,i},$$

where $K_v$ and $K'_v$ are the stabilizers of two vertices in the tree for $\G(F_v) \cong \PGL_2(F_v), I_v$ is the pointwise stabilizer of the edge between them, $C_0$ is the ``preimage in $I_v$ of the $p^{th}$ powers of the diagonal torus mod $v$" (cf. Corollary \ref{importantsubgroup}), and $dk$ denotes the volume 1 Haar measure for any compact group $K.$  Then the test functions

$$\mathbf{1}_\U du \leftrightarrow \sum c_{v_1,i_1} ... c_{v_n, i_n} \mathbf{1}_{\prod_{j = 1}^n U_{v_j, i_j} \times U^{\Sigma}} \prod_{j = 1}^n du_{v_j, i_j} \times du^{\Sigma}$$

trace-match (see Definition \ref{tracematchingtestfunctions}).  
\end{thm}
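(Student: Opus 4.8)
The plan is to reduce the global trace-matching statement to a product of local trace-matching statements, one for each place, and then assemble the local results already established in this chapter together with Theorem~\ref{parahoricmatching}. Recall from Definition~\ref{tracematchingtestfunctions} and Lemma~\ref{multiplicitymatching} that trace-matching is multiplicative over places: if at each place $v$ the measure $\mathbf{1}_{\U_v} du_v$ trace-matches some measure $m_v$ on $\G(F_v)$, and if $\prod_v m_v$ is expanded as $\sum_U c_U \mathbf{1}_U du$, then for a matching pair $\overpi = \otimes'_v \overpi_v$ and $\pi = \otimes'_v \pi_v$ one has $\langle \overpi_{\mathrm{fin}}^\U \rangle = \sum_U c_U \dim \pi_{\mathrm{fin}}^U$, which is exactly the assertion that $\mathbf{1}_\U du$ and $\sum_U c_U \mathbf{1}_U du$ trace-match globally. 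So it suffices to verify the matching place by place.

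First I would treat the places $v \notin \Sigma$, i.e.\ where $E_v/F_v$ is split or unramified. Here $\U_v$ is parahoric by the definition of tamely parahoric level structure (Definition~\ref{tameparahoric}), and $U^\Sigma = \prod_{v\notin\Sigma} U_v$ is chosen to be an associated (matching) parahoric level structure outside $\Sigma$ in the sense of Definition~\ref{associatedlevel}. Theorem~\ref{parahoricmatching} (Kottwitz) then says that at each such $v$ the test functions $\mathbf{1}_{\U_v} d\tilde u_v$ and $\mathbf{1}_{U_v} du_v$ geometrically match, hence trace-match. (When $D_v$ is ramified, recall the hypothesis in Definition~\ref{tameparahoric} that such $v$ are \emph{unramified} in $E$, so they fall under this case and not under $\Sigma$.) At the finitely many split places, the product $\U_v = U_v^p$ matches $\mathbf{1}_{U_v} du_v$ by the convolution example quoted from \cite[\S 8]{Lan}.

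Next I would treat $v \in \Sigma$, the tamely ramified places, where $\U_v = \PGL_2(O_{E_v})$. This is precisely the situation analyzed in $\S\ref{twistedspherical}$--$\S\ref{additivetestfunction}$: by Corollary~\ref{nodifference}, $\tr\{BC(\pi)(\sigma)\,BC(\pi)(\mathbf{1}_{\PGL_2(O_{E_v})}d\tilde k)\}$ equals $1$ if $BC(\pi)$ is unramified and $0$ otherwise, i.e.\ it equals $1$ exactly when $\pi$ has additive reduction (by the classification in $\S\ref{tamelanglandsparameters}$ — the unramified principal series lift to unramified representations, and one must also note that the cases where $BC(\pi)$ is unramified but $\pi$ itself is not a principal series, namely $\pi$ trivial, are handled as in the proof of Corollary~\ref{nodifference}). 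On the other hand, Theorem~\ref{additivetestfunction} produces the measure $m_{\mathrm{add},v} = \tfrac12[EP + \mathbf{1}_{C_0}dj]$ on $\PGL_2(F_v)$ with $\tr\,\pi(m_{\mathrm{add},v})$ equal to $1$ on additive-reduction $\pi$ and $0$ otherwise. Comparing these two, $\mathbf{1}_{\PGL_2(O_{E_v})}d\tilde k$ and $m_{\mathrm{add},v}$ trace-match at each $v \in \Sigma$. It remains only to unwind $EP$ for $\G = \PGL_2$ into the explicit combination $\tfrac12\mathbf{1}_{K_v}dk_v + \tfrac12\mathbf{1}_{K'_v} - \tfrac12\mathbf{1}_{I_v}di_v + \tfrac12\mathbf{1}_{C_0,v}dc_0$ appearing in the statement: the rank of $\PGL_2$ is $1$, so the Euler--Poincar\'e sum runs over the two vertex stabilizers $K_v, K'_v$ of a chamber (edge) with coefficient $+1$ and the edge stabilizer $I_v$ with coefficient $-1$; adding the $\tfrac12\mathbf{1}_{C_0}dj$ term and dividing by $2$ gives exactly the displayed $m_{\mathrm{add},v} = \sum_{i=1}^4 c_{v,i}\,du_{v,i}$.

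Finally, assembling: set $m_v = \mathbf{1}_{U_v} du_v$ for $v \notin \Sigma$ and $m_v = m_{\mathrm{add},v}$ for $v \in \Sigma$. By the two preceding paragraphs each $\mathbf{1}_{\U_v}d\tilde u_v$ trace-matches $m_v$. Expanding the global product $\prod_v m_v = \mathbf{1}_{U^\Sigma}du^\Sigma \times \prod_{j=1}^n m_{\mathrm{add},v_j}$ and multiplying out the four-term sums at the ramified places yields exactly
$$\sum c_{v_1,i_1}\cdots c_{v_n,i_n}\,\mathbf{1}_{\prod_{j=1}^n U_{v_j,i_j} \times U^\Sigma}\,\prod_{j=1}^n du_{v_j,i_j}\times du^\Sigma,$$
and multiplicativity of trace-matching (Lemma~\ref{multiplicitymatching}) gives the claimed global trace-matching. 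The main obstacle — and really the only non-bookkeeping point — is the identification at the ramified places: one must be sure that ``$BC(\pi)$ unramified'' is genuinely equivalent to ``$\pi$ has additive reduction'' \emph{including} the degenerate cases (trivial/Steinberg), so that the representation-theoretic indicator $m_{\mathrm{add},v}$ of Theorem~\ref{additivetestfunction} and the trace of $\mathbf{1}_{\PGL_2(O_{E_v})}d\tilde k$ under base change agree on the nose; this is exactly what Corollary~\ref{nodifference} together with the Langlands-parameter classification of $\S\ref{tamelanglandsparameters}$ is designed to supply, so no new input is needed beyond careful citation.
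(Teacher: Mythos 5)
Your proposal is correct and follows essentially the same route as the paper's proof: trace-match locally via Theorem \ref{parahoricmatching} at places outside $\Sigma$ and via Corollary \ref{nodifference} together with Theorem \ref{additivetestfunction} at the tamely ramified places, then multiply over places and expand the resulting product. The extra detail you supply (the Euler--Poincar\'e unwinding for $\PGL_2$ and the explicit equivalence of ``$BC(\pi)$ unramified'' with ``additive reduction'' including the trivial/Steinberg cases) is consistent with, and only elaborates on, the paper's argument.
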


\begin{proof}
Theorem \ref{additivetestfunction} shows that $m_{\mathrm{add}, v}$ trace-matches $\mathbf{1}_{\U_v} d\tilde{u}_v$ for each place $v \in \Sigma.$  For $v \notin \Sigma,$ let $U_v$ be associated to the parahoric level structure $\U_v.$  Theorem \ref{parahoricmatching} shows that $\mathbf{1}_{U_v} du_v$ trace-matches $\mathbf{1}_{\U_v} d\tilde{u}_v.$  Therefore, 
\begin{equation} \label{productmatch}
\mathbf{1}_{\U} d\tilde{u} \leftrightarrow \prod_{v \in \Sigma} m_{\mathrm{add},v} \times \mathbf{1}_{U^{\Sigma}} du^{\Sigma}
\end{equation} 
are matching test functions.  The result follows after expanding the right side of equation (\ref{productmatch}).  
\end{proof}

\section[Numerical torsion cohomology comparisons]{A numerical form of base change functoriality for torsion}
\label{funct}

In this section, we'll exhibit examples of $F$-acyclic local systems of $O_F$-modules on locally symmetric spaces associated to $\G$ for which the Cheeger-M\"{u}ller theorems of \cite[
$\S 1.7, \S 4$]{Lip1} can be applied.  In conjunction with the trace formula comparison proven in $\S \ref{abstractmatching},$ this will yield a numerical form of torsion base change functoriality.  

\begin{itemize}
\item
In $\S \ref{weirdloc},$ we will describe a certain class of $F$-acyclic local systems of $O_F$-modules over locally symmetric spaces associated to $\G.$  

\item
In $\S \ref{culmination},$ we will prove the main comparison theorem of this paper, concerning a version of ``numerical functoriality."
\end{itemize}

\subsection{A class of local systems}
\label{weirdloc}

We recall the construction of some local systems described in (\cite[$\S 10.1$]{CV}). \smallskip

Let $D$ be a quaternion algebra over an imaginary quadratic field $F$ and $\G$ be the adjoint group of the group of units of $D.$

For any quaternion algebra $Q$ over $F,$ we let $[Q]_m$ denote the $F$-vector space $[Q]_m = \mathrm{Sym}^m Q^0 / \Delta \mathrm{Sym}^{m-2} Q^0,$ where $Q^0$ denotes the trace zero elements of $Q$ and $\Delta$ denotes any invariant element of $\mathrm{Sym}^2 Q^0.$  The $F$-vector space $[Q]_m$ affords an $F$-linear representation of $Q^{\times} / F^{\times},$ which acts by conjugation on $Q$ and thus on the symmetric powers of $Q^0.$  \smallskip

Let $V_{a,b} = [D]_a \otimes [\overline{D}]_b,$ where $\overline{D}$ denotes the quaternion algebra $D \otimes_{\sigma} F, \sigma$ denoting the non-trivial automorphism of $F.$  We note that $[\overline{D}]_b = \overline{[D]_b}.$ \smallskip

Consider the representation $\rho_{a,b}: \G \rightarrow \GL(V_{a,b})$ given by conjugation action.  By the second construction outlined in $\S \ref{compatibleloc},$ there is a matching representation $\widetilde{\rho}_{a,b}: R_{E/F} \G \rtimes \Galois \rightarrow R_{E/F}\GL(V_{a,b}).$

Suppose that $U \subset \G(\finadele_F)$ is compact open and $K \subset \G(F_\R)$ is a maximal compact subgroup.  Suppose that $\U \subset \G(\finadele_E)$ is compact open and $\K \subset \G(E_\R)$ is maximal compact, with both $\U$ and $\K$ Galois-stable.  Fix an $O_F$-lattice $\mathcal{O} \subset \rho_{a,b}$ and an $O_E$-lattice $\widetilde{\mathcal{O}} \subset \widetilde{\rho}_{a,b}.$  Suppose that $U$ stabilizes $\mathcal{O}$ and $\U$ stabilizes $\widetilde{\mathcal{O}}.$  By the construction of $\S \ref{rational},$ the matching representations $\rho_{a,b}$ and $\widetilde{\rho}_{a,b}$ together with these data give rise to matching local systems
$$L_{a,b} \rightarrow M_U = \G(F) \backslash \G(\adele_F) / KU, \loc_{a,b} \rightarrow \M_\U = \G(E) \backslash \G(\adele_E) / \K \U.$$   
For each complex embedding $\iota: E \hookrightarrow \C,$ we let 
$$L_{a,b, \iota} = L_{a,b} \otimes_{\iota \circ i_F} \C, \loc_{a,b, \iota} = \loc_{a,b} \otimes_{\iota} \C,$$
local systems of $\C$-vector spaces.

\subsubsection{Acyclicity}
\label{acyclic}

We recall a theorem of Borel-Wallach, specialized to our setup:

\begin{thm}[cf. \cite{BW}, $\S$ II, Proposition 6.12] \label{acycliclocalsystems}
Let $\loc \rightarrow \M_\U$ be the local system of $\mathbb{C}$-vector spaces corresponding to a complex representation $\rho:  \G(E_\R) = R_{E/F} \G(F_\R) \rightarrow GL(V).$   Suppose that $d\rho: \mathrm{Lie}(\G(E_{\mathbb{R}}) )\rightarrow \mathrm{End}(V)$ is not isomorphic to its twist by the Cartan involution of $\G(E_\R)$ corresponding to $\K.$  Then $\loc \rightarrow \M_\U$ is acyclic.  A similar statement holds for a local system of $\C$-vector spaces $L \rightarrow M_U$ corresponding to representations of $\G(F_\R).$      
\end{thm}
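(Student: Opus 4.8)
The plan is to deduce this from the standard vanishing theorem for $(\mathfrak{g},K)$-cohomology, combined with Matsushima's formula, which is exactly the representation-theoretic dictionary set up in \S\ref{reptorsion} and \S\ref{homogeneous}. First I would invoke Matsushima's formula (as in the proof of the Lemma following Conjecture \ref{galoisstructure}) to write $H^{*}(\M_\U, \loc) = \bigoplus_{\overpi} \dim \W[\overpi]^\U \cdot H^{*}(\overpi_{\infty} \otimes \rho)$, where $\overpi$ ranges over the irreducible automorphic representations of $\G(\adele_E)$ and $H^{*}(\overpi_\infty \otimes \rho)$ denotes $(\g,\K)$-cohomology; since $\G(E)\backslash\G(\adele_E)$ is compact (recall $D_E$ is assumed non-split throughout \S\ref{quat}), the decomposition is a genuine finite direct sum. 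Thus it suffices to show $H^{*}(\overpi_\infty \otimes \rho) = 0$ for every unitary $\overpi_\infty$ appearing; equivalently, by Kuga's Lemma (\S\ref{kugaslemma}) and the Laplacian-Casimir identity, it suffices to show that no unitary representation $\overpi_\infty$ has the same Casimir eigenvalue as $\rho$, or more precisely that the relevant cohomology groups $\Hom_\K(\wedge^\bullet \widetilde{\mathfrak p}, \overpi_\infty \otimes \rho_\iota)$ with harmonic (zero Laplacian) condition all vanish.

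The key input is the Borel--Wallach vanishing criterion \cite[\S III]{BW}: if $\rho$ is an irreducible finite-dimensional representation of a real reductive group whose infinitesimal character does not match that of the trivial representation twisted appropriately --- concretely, if $\rho \not\cong \rho^\theta$ where $\theta$ is the Cartan involution attached to $\K$ --- then $H^{*}(\overpi_\infty \otimes \rho) = 0$ for all unitary $\overpi_\infty$. So the second step is to reduce the hypothesis ``$d\rho$ is not isomorphic to its twist by the Cartan involution'' to the precise form of the Borel--Wallach hypothesis. The relevant statement is that a nonzero $(\g,\K)$-cohomology of $\overpi_\infty \otimes \rho$ forces, via the Wigner lemma / infinitesimal character comparison, that $\overpi_\infty$ and the contragredient $\rho^\vee$ (equivalently $\rho^\theta$ after accounting for the fact that on $\G(\R)$ the contragredient of $\rho$ is $\rho\circ\theta$ up to conjugacy) have the same infinitesimal character; when $\rho \not\cong \rho^\theta$ one shows this is impossible for unitary $\overpi_\infty$ because the infinitesimal characters of the two candidates lie in different Weyl orbits. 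I would cite \cite[\S II, Proposition 6.12]{BW} directly for this, as the statement of the theorem indicates, and simply note that the hypothesis on $d\rho$ is literally the hypothesis under which that Proposition applies.

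The third step is the descent from $\M_\U$ to the adelic quotient: one must observe that the fundamental domain description $\M_\U = \G(E)\backslash\G(\adele_E)/\K\U$ is a finite disjoint union of arithmetic-quotient manifolds $\Gamma_i \backslash X_{\G(E_\R)}$, and the local system $\loc_{\iota}$ restricts on each piece to the flat bundle attached to $\rho_\iota|_{\Gamma_i}$; Matsushima applies componentwise, and the $(\g,\K)$-cohomology appearing is independent of $i$. The final step is the parenthetical claim for $L \to M_U$: this is the identical argument with $\G(E_\R)$ replaced by $\G(F_\R)$ and $R_{E/F}\G$ replaced by $\G$, so it requires no separate work beyond restating the hypothesis.

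The main obstacle I anticipate is not any deep difficulty but rather bookkeeping about exactly which twist of $\rho$ enters: whether the correct condition is $\rho \not\cong \rho^\theta$, $\rho \not\cong \rho^\vee$, or $\rho\not\cong \overline{\rho^\vee}$, and how these interact with the complex embedding $\iota$ and with the Weyl unitary trick normalization from \S\ref{greps} (recall on a group like $\PGL_2(\C) = \G(F_\R)$ for $F$ imaginary quadratic, the contragredient of $[D]_a\otimes[\overline D]_b$ is $[D]_a\otimes[\overline D]_b$ itself, so the relevant asymmetry must come from $\theta$ swapping the holomorphic and antiholomorphic factors, i.e. from $a \neq b$). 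Getting the statement of the Cartan-twist hypothesis to line up precisely with the Borel--Wallach infinitesimal-character condition is the one point that needs care; everything else is a direct appeal to \cite{BW} and the Matsushima/Kuga machinery already developed.
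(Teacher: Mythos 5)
Your argument is correct in outline, but it is not the route the paper takes: the paper's entire proof is a citation to \cite[$\S 4.1$]{BV}, where the \emph{stronger} statement is proven that representations $\rho$ with $\rho \ncong \rho\circ\theta$ give rise to strongly acyclic families, i.e.\ a uniform spectral gap for all the twisted Laplacians; acyclicity of each $\loc \rightarrow \M_\U$ is then an immediate consequence. Your proposal instead proves the qualitative statement directly: Matsushima's decomposition of $H^{*}(\M_\U,\loc)$ into $(\g,\K)$-cohomology of the unitary $\overpi_\infty$ occurring in $L^2$ (legitimate here since the quotient is compact), followed by the Borel--Wallach/Wigner vanishing criterion, with the hypothesis $\rho \ncong \rho^{\theta}$ feeding in through the infinitesimal-character comparison. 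That works, and the one point you flag as delicate (whether the right condition is $\rho\ncong\rho^{\theta}$, $\rho\ncong\rho^{\vee}$, or $\rho\ncong\overline{\rho^{\vee}}$, and the fact that unitarity of $\overpi_\infty$ converts Wigner's constraint $\chi_{\overpi_\infty}=\chi_{\rho^{\vee}}$ into a self-conjugacy condition on $\rho$ that, for the complex group $\G(E_\R)\cong\PGL_2(\C)^p$, is exactly $\rho\cong\rho^{\theta}$) is indeed the only real content; since irreducible finite-dimensional representations are determined by their infinitesimal characters, the hypothesis $a\neq b$ rules it out, as you say. The trade-off between the two routes: yours is self-contained at the level of $(\g,\K)$-cohomology and proves precisely the stated acyclicity, whereas the paper's citation buys the uniform spectral gap, which is not needed for this theorem but is needed later anyway (Theorem \ref{strongbw} and the limit multiplicity theorem of \cite{BV} invoked in $\S \ref{growth}$), which is presumably why the author defers to \cite{BV} rather than arguing as you do. One small caution: your intermediate reformulation ``no unitary $\overpi_\infty$ has the same Casimir eigenvalue as $\rho$'' is not quite the right condition by itself (one also needs the relevant $\Hom_\K(\wedge^{\bullet}\widetilde{\mathfrak{p}},\overpi_\infty\otimes\rho_\iota)$ to be nonzero), but you immediately correct this, so it does not affect the argument.
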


\begin{proof}
See \cite[$\S 4.1$]{BV}, wherein the stronger statement that representations $\rho$ as above give rise to ``strongly acyclic families" is proven.  
\end{proof}

\begin{cor} \label{acyc}
The local systems $L_{a,b,\iota} \rightarrow M_U, \loc_{a,b, \iota} \rightarrow \M_\U$ are acyclic provided that $a \neq b.$
\end{cor}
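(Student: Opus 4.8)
The plan is to reduce the corollary to the acyclicity criterion of Theorem~\ref{acycliclocalsystems}: I only need to check that, for $a\neq b$, $d\rho_{a,b}$ (respectively $d\widetilde\rho_{a,b}$) is not isomorphic to its twist by the Cartan involution attached to $K$ (respectively $\K$); acyclicity of $L_{a,b,\iota}$ and $\loc_{a,b,\iota}$ is then immediate. The first step is to make the real groups explicit. Since $F$ is imaginary quadratic, $F\otimes_{\mathbb Q}\R\cong\C$ and every quaternion algebra over $F$ splits over $\C$, so $\G(F_\R)\cong\PGL_2(\C)$ with maximal compact $K\cong\mathrm{PU}(2)$; likewise $E$ is totally imaginary of degree $2p$ over $\mathbb Q$, so $E\otimes_{\mathbb Q}\R\cong\C^p$ and $\G(E_\R)\cong\PGL_2(\C)^p$, the Cartan involution attached to a Galois-stable $\K$ being the $p$-fold product of that of $\PGL_2(\C)$.

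The second step is to identify the coefficient representations as representations of the real group $\PGL_2(\C)$. Fix $\iota_0:=\iota|_F$ and let $\bar\iota_0$ be its complex conjugate. Then $D^0\otimes_{F,\iota_0}\C\cong\mathfrak{sl}_2(\C)$ is the adjoint representation, so $[D]_a\otimes_{F,\iota_0}\C$ is the irreducible $(2a{+}1)$-dimensional algebraic representation $V_{2a}=\mathrm{Sym}^{2a}$ of $\PGL_2(\C)$, holomorphic with respect to $\iota_0$. Since $\overline D = D\otimes_{F,\sigma}F$ with $\sigma$ the complex conjugation of $F$, base change of $[\overline D]_b$ along $\iota_0$ is base change of $[D]_b$ along $\bar\iota_0$, so $[\overline D]_b\otimes_{F,\iota_0}\C\cong V_{2b}$ viewed \emph{anti}holomorphically with respect to $\iota_0$. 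Hence, as a representation of the real Lie group $\PGL_2(\C)$,
$$\rho_{a,b,\iota}\ \cong\ V_{2a}\ \boxtimes\ \overline{V_{2b}} .$$
The Cartan involution $\theta$ attached to $K$ is conjugation with respect to the compact form; it is conjugate-$\C$-linear, so on $\mathfrak{sl}_2(\C)\otimes_\R\C\cong\mathfrak{sl}_2(\C)\oplus\overline{\mathfrak{sl}_2(\C)}$ it interchanges the holomorphic and antiholomorphic summands, up to an inner automorphism of $\mathfrak{sl}_2$ (there being no outer automorphisms of $\PGL_2$). Therefore the $\theta$-twist of $V_{2a}\boxtimes\overline{V_{2b}}$ is isomorphic to $V_{2b}\boxtimes\overline{V_{2a}}\cong\rho_{b,a,\iota}$, and these are isomorphic exactly when $2a=2b$. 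So for $a\neq b$ the hypothesis of Theorem~\ref{acycliclocalsystems} holds and $L_{a,b,\iota}\to M_U$ is acyclic. (This is, in effect, the strong acyclicity of the family already recorded in \cite[\S4.1]{BV}.)

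For the $E$-side, I would use that since $\widetilde\rho_{a,b}$ matches $\rho_{a,b}$ one has $\widetilde\rho_{a,b,\iota}\cong\rho_{a,b,\iota}^{\boxtimes p}$ as a representation of $\G(E_\R)\cong\PGL_2(\C)^p$ (this external-power description appears in the proof of Theorem~\ref{abstractmatchingthm}). Twisting by the Cartan involution, which is the $p$-fold product of that of $\PGL_2(\C)$, swaps the holomorphic and antiholomorphic factors in each slot by the previous step, giving $\rho_{b,a,\iota}^{\boxtimes p}\cong\widetilde\rho_{b,a,\iota}$; an external tensor power of an irreducible representation determines that representation up to isomorphism, so this equals $\widetilde\rho_{a,b,\iota}$ only when $a=b$. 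Hence for $a\neq b$ Theorem~\ref{acycliclocalsystems} again applies and $\loc_{a,b,\iota}\to\M_\U$ is acyclic. The one genuinely delicate point is the bookkeeping in the second step — keeping track of which embedding of $F$ (or of $E$) makes a given tensor factor holomorphic, and verifying that the compact-form Cartan involution interchanges holomorphic and antiholomorphic parts — since it is exactly this swap that makes $V_{2a}\boxtimes\overline{V_{2b}}$ fail to be Cartan-self-conjugate once $a\neq b$; everything else is formal once the dictionary between the $[Q]_m$'s and $\PGL_2(\C)$-representations is in place.
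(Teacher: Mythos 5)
Your proposal is correct and follows essentially the same route as the paper: invoke the Borel--Wallach criterion of Theorem \ref{acycliclocalsystems} after checking that the Cartan involution sends $\rho_{a,b}$ to $\rho_{b,a}\ncong\rho_{a,b}$ (and likewise $\widetilde{\rho}_{a,b}^{\theta}\cong\widetilde{\rho}_{b,a}\ncong\widetilde{\rho}_{a,b}$), which is exactly the verification the paper records in $\S$\ref{pgl2stronglyacyclic} citing \cite[$\S 4.1$]{BV}. You simply carry out the holomorphic/antiholomorphic bookkeeping explicitly where the paper defers to the reference.
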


\begin{cor}
The $O_E$-module $H^{\bullet}(\M_U, \loc_{a,b})$ and the $O_F$-module $H^{\bullet}(M_U, L_{a,b})$ are torsion for $a \neq b.$
\end{cor}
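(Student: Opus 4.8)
The plan is to deduce this statement directly from the acyclicity result of Corollary~\ref{acyc} together with the universal coefficient theorem. First I would recall that the local systems $L_{a,b}$ and $\loc_{a,b}$ are local systems of finitely generated projective (indeed free, after shrinking $U$) $O_F$- and $O_E$-modules over the \emph{compact} manifolds $M_U$ and $\M_\U$. Hence each cohomology group $H^i(M_U, L_{a,b})$ is a finitely generated $O_F$-module, and similarly $H^i(\M_\U, \loc_{a,b})$ is a finitely generated $O_E$-module. A finitely generated module over a Dedekind domain (such as $O_F$ or $O_E$) is torsion if and only if it vanishes after tensoring with the fraction field; so the claim is equivalent to $H^i(M_U, L_{a,b}) \otimes_{O_F} F = 0$ and $H^i(\M_\U, \loc_{a,b}) \otimes_{O_E} E = 0$.

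Next I would invoke the universal coefficient / flat base change statement: for a local system $L$ of finitely generated free $O_F$-modules over a finite CW-complex, $H^i(M_U, L) \otimes_{O_F} F \cong H^i(M_U, L \otimes_{O_F} F)$, since $F$ is flat over $O_F$ and the cochain complex computing cohomology consists of finitely generated free $O_F$-modules. By Lemma~\ref{integrallocalsystem}(2) the local system $L\otimes_{O_F} F$ is precisely $V'_{\rho_{a,b}}$, the local system of $F$-vector spaces attached to $\rho_{a,b}$; likewise $\loc_{a,b} \otimes_{O_E} E$ is the local system of $E$-vector spaces attached to $\widetilde\rho_{a,b}$. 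Then, picking any complex embedding $\iota$ and using that $\C$ is faithfully flat over $F$ (resp. $E$), it suffices to check that the complexified local systems $L_{a,b,\iota} \to M_U$ and $\loc_{a,b,\iota} \to \M_\U$ have vanishing cohomology. That is exactly the content of Corollary~\ref{acyc}, which applies because $a \neq b$.

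I do not anticipate a serious obstacle here: the only mild points to be careful about are (i) ensuring $U$ (resp. $\U$) is small enough that $M_U$ (resp. $\M_\U$) is a manifold and the local system is genuinely a local system of projective $O_F$-modules, which is built into the running hypotheses via Lemma~\ref{integrallocalsystem}, and (ii) the passage between sheaf cohomology with $O_F$-coefficients and the cohomology of a finite complex of finitely generated $O_F$-modules, which is standard for local systems on compact manifolds. If one prefers to avoid explicit mention of faithful flatness of $\C/F$, one can instead note directly that a finitely generated $O_F$-module $A$ is torsion iff $A \otimes_{O_F,\iota} \C = 0$ for one (equivalently every) embedding $\iota$, since $A \otimes_{O_F} F \hookrightarrow A \otimes_{O_F,\iota}\C$ when $A\otimes F \ne 0$. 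The argument for $\loc_{a,b}$ is word-for-word the same with $O_F, F$ replaced by $O_E, E$, so I would state it once and remark that it applies \emph{mutatis mutandis}.

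In summary: reduce ``torsion'' to ``vanishes after $\otimes$ fraction field'' (Dedekind domain, finite generation from compactness), identify the rationalized local system via Lemma~\ref{integrallocalsystem}, complexify and apply Corollary~\ref{acyc}. I would write this up in a few lines, citing Lemma~\ref{integrallocalsystem} and Corollary~\ref{acyc} and the elementary fact about finitely generated torsion modules over Dedekind domains.
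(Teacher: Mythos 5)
Your argument is correct and is exactly the (implicit) reasoning the paper intends: the corollary is stated as an immediate consequence of Corollary \ref{acyc}, namely that finitely generated cohomology over $O_F$ (resp.\ $O_E$) must be torsion once the complexified local systems $L_{a,b,\iota}$ and $\loc_{a,b,\iota}$ are acyclic for $a \neq b$. The bookkeeping you add (flat base change to the fraction field via Lemma \ref{integrallocalsystem}, then a complex embedding $\iota$) is the standard way to make this precise and matches the paper's setup.
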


\subsection{Numerical comparison theorem}
\label{culmination}

We can finally combine our trace formula comparisons with the Cheeger-M\"{u}ller theorems of \cite[
$\S 1.7$]{Lip1} to obtain a numerical comparison of Reidemeister torsion.

\begin{thm} \label{cohomologycomparison}
Let $F$ be an imaginary quadratic field.  Let $E/F$ Galois, cyclic of odd prime degree $p.$  Let $a \neq b.$  Then there is an equality
\begin{equation} \label{numericalequality}
\log RT_{\sigma}(\M_\U, \loc_{a,b,\iota}) = p \left(\sum  c_{v_1,i_1} ... c_{v_n, i_n}  \log RT(M_{\prod_{j = 1}^n U_{v_j, i_j} \times U^{\Sigma}}, L_{a,b,\iota}) \right),
\end{equation}
where the constants $c_{v,k}$ and level structures are as in Theorem \ref{finalmatching}.
\end{thm}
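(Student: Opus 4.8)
The plan is to deduce the Reidemeister torsion identity \eqref{numericalequality} from the analytic torsion comparison Corollary \ref{abstracttorsion} (equivalently Theorem \ref{abstractmatchingthm}) together with the matching test functions produced in Theorem \ref{finalmatching}, and then to pass from analytic torsion to Reidemeister torsion using the equivariant and non-equivariant Cheeger-M\"{u}ller theorems proved by the author in \cite{Lip1}. First I would apply Theorem \ref{finalmatching} with the representations $\rho_{a,b}, \widetilde{\rho}_{a,b}$ of \S\ref{weirdloc}, which match in the sense of Definition \ref{compatible'}; the hypotheses of Theorem \ref{abstractmatchingthm} are exactly the trace-matching of $\mathbf{1}_\U d\tilde u$ with $\sum c_{v_1,i_1}\cdots c_{v_n,i_n}\mathbf{1}_{\prod_j U_{v_j,i_j}\times U^\Sigma}\prod_j du_{v_j,i_j}\times du^\Sigma$, so Corollary \ref{abstracttorsion} gives
\begin{equation*}
\log\tau_\sigma(\M_\U,\loc_{a,b,\iota}) = p\left(\sum c_{v_1,i_1}\cdots c_{v_n,i_n}\log\tau(M_{\prod_j U_{v_j,i_j}\times U^\Sigma},L_{a,b,\iota}) - \log(p)\,Z_{\bullet}(0)\right).
\end{equation*}
Because $a\neq b$, Corollary \ref{acyc} tells us $\loc_{a,b,\iota}$ and each $L_{a,b,\iota}$ are acyclic, so by the remark following Corollary \ref{abstracttorsion} the spurious $\log p\cdot Z(0)$ term vanishes, leaving $\log\tau_\sigma(\M_\U,\loc_{a,b,\iota}) = p\sum c_{v_1,i_1}\cdots c_{v_n,i_n}\log\tau(M_{\prod_j U_{v_j,i_j}\times U^\Sigma},L_{a,b,\iota})$.

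Next I would invoke the Cheeger-M\"{u}ller theorems of \cite[\S 1.7, \S 4]{Lip1}: the untwisted theorem of M\"{u}ller \cite{Mu2} gives $\tau(M_U,L_{a,b,\iota}) = RT(M_U,L_{a,b,\iota})$ for each level structure $U$ occurring on the right (the local systems here being unimodular after choosing the metric induced by the invariant Hermitian metric $\langle\cdot,\cdot\rangle_0$), and the equivariant theorem — the Bismut-Zhang identity \cite{BZ2} with the anomaly term shown to vanish in \cite{Lip1} in precisely the situation of a quaternionic locally symmetric space with $\sigma$ acting by a Galois permutation — gives $\tau_\sigma(\M_\U,\loc_{a,b,\iota}) = RT_\sigma(\M_\U,\loc_{a,b,\iota})$. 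One should check that the fixed-point geometry of $\sigma$ acting on $\M_\U$ falls under the cases treated in \cite{Lip1}: since $E/F$ is tamely ramified and the places of ramification avoid those where $D$ ramifies, $\sigma$ either acts freely or the fixed locus is itself a locally symmetric space of the expected type, so the anomaly formula contribution is zero. Substituting these two identities into the analytic torsion equality yields \eqref{numericalequality} verbatim.

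The main obstacle I anticipate is verifying the applicability of the \emph{equivariant} Cheeger-M\"{u}ller theorem — i.e. checking that the hypotheses under which \cite{Lip1} proves vanishing of the Bismut-Zhang anomaly term actually hold for $(\M_\U, \loc_{a,b,\iota}, \sigma)$ for the tamely parahoric level structures of Definition \ref{tameparahoric}. This requires understanding the germ of the $\sigma$-fixed-point set inside $\M_\U$ and confirming it is a disjoint union of locally symmetric spaces to which the relevant lemma of \cite{Lip1} applies; the subtlety is that at tamely ramified places the level structure $\PGL_2(O_{E_v})$ is $\sigma$-stable but the combinatorics of how $\sigma$ permutes the relevant double cosets must be compatible with the hypotheses there. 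A secondary, more routine point is confirming that the metric normalization (Killing form of $\mathrm{Lie}(\G(E_\R))$ versus $\langle\cdot,\cdot\rangle_0$ on the coefficient system) is consistent across the twisted and untwisted sides so that the two Cheeger-M\"{u}ller identities can be combined without stray metric-dependent constants; acyclicity again makes this harmless, since acyclic torsion is metric-independent, but it is worth stating explicitly.
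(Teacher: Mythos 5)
Your proposal is correct and follows essentially the same route as the paper: Theorem \ref{finalmatching} feeds the trace-matching hypothesis into Corollary \ref{abstracttorsion}, the acyclicity from Corollary \ref{acyc} (since $a \neq b$) kills the extraneous $\log(p)\,Z(0)$ term and the regulators, and the untwisted Cheeger--M\"{u}ller theorem of \cite{Mu2} together with the equivariant Bismut--Zhang comparison established in \cite{Lip1} convert both analytic torsions into Reidemeister torsions. Your flagged concern about verifying the vanishing of the Bismut--Zhang anomaly for the $\sigma$-fixed-point geometry is exactly what the paper delegates to the cited results of \cite{Lip1}, so no additional argument is needed beyond that citation.
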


\begin{proof}
By Theorem \ref{finalmatching}, the test functions
$$\mathbf{1}_\U du \leftrightarrow \sum c_{v_1,i_1} ... c_{v_n, i_n} \mathbf{1}_{\prod_{j = 1}^n U_{v_j, i_j} \times U^{\Sigma}} \prod_{j = 1}^n du_{v_j, i_j} \times du_{\Sigma}$$
are matching.  By Corollary \ref{abstracttorsion}, there is thus an equality
$$\log \tau_{\sigma}(\M_\U, \loc_{a,b,\iota}) = p \left(\sum  c_{v_1,i_1} ... c_{v_n, i_n}  \log \tau(M_{\prod_{j = 1}^n U_{v_j, i_j} \times U^{\Sigma}}, L_{a,b,\iota}) \right).$$
By the assumption $a \neq b,$ Corollary \ref{acyc} applies and so all regulators vanish.  Because $\loc_{a,b,\iota}, L_{a,b,\iota}$ are unimodular, the twisted Bismut-Zhang theorem (see \cite[
Theorem 4.1]{Lip1}) and the untwisted Cheeger-M\"{u}ller theorem of \cite{Mu2} (see \cite[
$\S 1.7$]{Lip1}) apply and give
 $$\log RT_{\sigma}(\M_\U, \loc_{a,b,\iota}) = p \left(\sum  c_{v_1,i_1} ... c_{v_n, i_n}  \log RT(M_{\prod_{j = 1}^n U_{v_j, i_j} \times U^{\Sigma}}, L_{a,b,\iota}) \right).$$
\end{proof}

\begin{rem}
The equality \eqref{numericalequality} from Theorem \ref{cohomologycomparison} is translated into a numerical comparison between torsion cohomology groups in  \cite[
Lemma 1.21]{Lip1}.
\end{rem}

\section[Growth of torsion]{Asymptotic growth of analytic torsion and torsion cohomology}
\label{growth}

Let $\G$ be the adjoint group of the unit group of a quaternion algebra $D$ over an imaginary quadratic field $F.$  Let $E/F$ be a cyclic Galois extension of odd prime degree $p.$  

In this section, we'll prove a torsion cohomology growth theorem for the local systems $\loc_{a,b}$ defined in $\S \ref{weirdloc}$ over a sequence of locally symmetric spaces for $R_{E/F} \G.$  The trace formula comparison that we have  proven in $\S \ref{abstractmatching}$ can be combined with the results of \cite{BV} to estimate the asymptotic growth of torsion in homology through a sequence $\mathcal{M}_{\mathcal{U}_N}$ of such spaces where $\U_N$ is a parahoric level structure (see definition \ref{tameparahoric}) .

\begin{itemize}
\item
We will recall the definition of a strongly acyclic family used in \cite{BV} in $\S \ref{stronglyacyclic}.$

\item
In $\S \ref{growthoftwistedatorsion},$ we will prove a growth theorem for twisted analytic torsion.

\item
In $\S \ref{cohomologygrowth},$ we will combine our growth theorem for twisted analytic torsion with the Bismut-Zhang theorem to prove a torsion cohomology growth theorem.
\end{itemize}

\subsection{Redux of strong acyclicity}
\label{stronglyacyclic}

Let $L \rightarrow M$ be a metrized local system over a compact Riemannian manifold $M.$  Let $M_n \xrightarrow{\pi_n} M$ be a sequence of covers.  \bigskip

\begin{defn}[Strong acyclicity] 
The family $L_n = \pi_n^{*} L \rightarrow M_n$ is \emph{strongly acyclic} if the spectra of the Laplace operators $\Delta_{i, L_n}, 0 \leq i \leq \dim M$ admit a uniform spectral gap.  That is, there is some $\epsilon > 0$ such that $\lambda > \epsilon$ for any eigenvalue $\lambda$ of any Laplace operator $\Delta_{i, L_n}.$ 
\end{defn}

In \cite{BV}, Bergeron and Venkatesh crucially use this hypothesis in proving a ``limit multiplicity formula for analytic torsion"; it enables them to uniformly control the long time asymptotics of an infinite family of heat kernels.    The surprising fact is that strongly acyclic local systems over locally symmetric spaces are abundant. \bigskip

Let $\mathbf{H}$ be an algebraic group over $F.$  Let $M$ be an associated locally symmetric space $\Gamma \backslash \mathbf{H}(F_{\mathbb{R}}) / K.$  Suppose that $\Gamma$ is stable by the Cartan involution $\theta$ associated to $K.$  \\
Let $\rho: \mathbf{H} \rightarrow \GL(V_{\rho})$ be a representation over $F.$  Any $O_F$-lattice inside $V_{\rho}$ which is stable by $\Gamma$ gives rise to a local system $L_{\rho} \rightarrow M.$  We will require the following strengthened version of Theorem \ref{acycliclocalsystems}.

\begin{thm}[\cite{BV}, 4.1] \label{strongbw}
Suppose $d\rho: \mathrm{Lie}(\mathbf{H}(F_{\mathbb{R}}) )\rightarrow \mathrm{End}(V) \ncong d\rho \circ \theta.$ Then for any family of covers $M_n \rightarrow M,$ the family $L_n = \pi_n^{*} L \rightarrow M_n$ is strongly acyclic.  
\end{thm}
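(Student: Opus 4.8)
The plan is to follow Bergeron--Venkatesh \cite{BV} and reduce strong acyclicity to a purely archimedean, representation-theoretic spectral statement. Write $G = \mathbf{H}(F_\R)$; since the quotients $M_n$ are compact, $L^2(\Gamma_n \backslash G)$ decomposes discretely as $\widehat{\bigoplus}_\pi m_n(\pi)\,\pi$ over unitary irreducibles. Using the identification of Lemma \ref{diffforms} together with Kuga's Lemma \ref{kuga}, one gets
$$\Omega^i(M_n, L_n\otimes\C) \;=\; \bigoplus_\pi m_n(\pi)\; \Hom_K(\wedge^i\mathfrak{p},\, \pi_\infty\otimes\rho_\C),$$
with the form Laplacian $\Delta_{i,L_n}$ acting on the $\pi$-summand by the scalar $\lambda_\rho - \lambda_\pi$, where $\lambda_{(\cdot)}$ denotes the Casimir eigenvalue normalized so that $\lambda_{\mathrm{triv}}=0$. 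Hence, for every cover and every degree,
$$\mathrm{spec}(\Delta_{i,L_n}) \;\subseteq\; \Lambda(\rho) := \{\, \lambda_\rho - \lambda_\pi \;:\; \pi\in\widehat{G}_{\mathrm{unit}},\ \Hom_K(\wedge^\bullet\mathfrak{p},\pi\otimes\rho_\C)\neq 0 \,\},$$
and the crucial observation is that $\Lambda(\rho)$ depends only on $G$, $K$ and $\rho$, not on $n$. Thus it suffices to show $\inf\Lambda(\rho) > 0$: any $\epsilon$ with $0<\epsilon<\inf\Lambda(\rho)$ then witnesses strong acyclicity (in particular $H^*(M_n,L_n\otimes\C)=0$).

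To bound $\Lambda(\rho)$ from below I would proceed in two steps. First, $\Lambda(\rho)\subseteq[0,\infty)$, since each $\Delta_{i,L_n}$ is a non-negative operator, so $\lambda_\rho-\lambda_\pi\geq 0$ whenever the relevant $\Hom$-space is non-zero. Second, one upgrades this to a uniform positive bound: a representation $\pi$ contributing to $\Lambda(\rho)$ must share a $K$-type with the \emph{finite} set $S(\rho)$ of $K$-types occurring in $\wedge^\bullet\mathfrak{p}\otimes\rho_\C^{*}$, and Parthasarathy's Dirac-operator inequality bounds $\lambda_\rho - \lambda_\pi$ below by an explicit quantity $\delta(\tau,\rho)$ depending only on $\tau\in S(\rho)$ and $\rho$. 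Taking $\delta(\rho):=\min_{\tau\in S(\rho)}\delta(\tau,\rho)$ gives $\inf\Lambda(\rho)\geq\delta(\rho)$.

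The hypothesis $d\rho\not\cong d\rho\circ\theta$ enters exactly in showing $\delta(\rho)>0$, i.e.\ that equality is never attained; this is the step I expect to be the main obstacle. Equality $\lambda_\rho=\lambda_\pi$ together with $\Hom_K(\wedge^j\mathfrak{p},\pi\otimes\rho_\C)\neq 0$ is precisely the statement that the $(\mathfrak{g},K)$-cochain complex computing $H^*(\mathfrak{g},K;\pi\otimes\rho_\C)$ carries harmonic cochains, hence that $\pi$ is a cohomological representation with coefficients in $\rho$. By the Vogan--Zuckerman classification such a unitary $\pi$ is an $A_\mathfrak{q}(\lambda)$, and the existence of such a module forces the infinitesimal data of $\rho$ to be $\theta$-self-conjugate, equivalently $d\rho\cong d\rho\circ\theta$. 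Under our hypothesis this is impossible, so the Dirac inequality is strict for every $\tau\in S(\rho)$ and $\delta(\rho)>0$. Finally, for a general $F$-group $\mathbf{H}$, we have $G=\prod_{v\mid\infty}\mathbf{H}(F_v)$, with $\rho$ and $\theta$ factoring accordingly; by the K\"unneth formula for $(\mathfrak{g},K)$-cohomology the whole argument reduces to a single real factor, where it is precisely \cite[Theorem 4.1]{BV} and its proof.
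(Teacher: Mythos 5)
First, a point of reference: the paper itself does not prove this statement --- Theorem \ref{strongbw} is quoted verbatim from \cite[Lemma 4.1]{BV} --- so your proposal has to be measured against Bergeron--Venkatesh's argument. Your architecture is exactly theirs in outline: the Matsushima/Kuga identification of the form Laplacians with $\lambda_\rho-\lambda_\pi$ on isotypic pieces, the observation that the resulting set $\Lambda(\rho)$ of possible eigenvalues is independent of $n$, the use of $d\rho\ncong d\rho\circ\theta$ to exclude unitary $\pi$ with $\lambda_\pi=\lambda_\rho$ and nonzero $(\mathfrak g,K)$-cochains (equivalently, unitary cohomological representations with coefficients in $\rho$, cf. \cite[II.6.12]{BW}), and the K\"unneth reduction to a single archimedean factor.

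However, there is a genuine gap at the one step that carries all the weight: the \emph{uniform} positivity of the gap. You define $\delta(\tau,\rho)$ as the margin supplied by Parthasarathy's Dirac inequality and then claim $\delta(\rho)=\min_{\tau\in S(\rho)}\delta(\tau,\rho)>0$ ``because equality is never attained,'' invoking Vogan--Zuckerman. This inference is invalid as written: $\delta(\tau,\rho)$ is a purely numerical quantity determined by the weights of $\tau$ and $\rho$ (it is $\lambda_\rho$ minus the minimum over $\widetilde K$-types $\gamma$ of $\tau\otimes(\text{spin})$ of the Parthasarathy bound), and its positivity is not a consequence of the statement that no actual unitary representation achieves $\lambda_\pi=\lambda_\rho$; conversely, ruling out equality for each individual contributing $\pi$ does not yield a bound uniform over the infinitely many $\pi$ (hence over all $n$), since a priori the eigenvalues $\lambda_\pi$ could accumulate at $\lambda_\rho$. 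Showing that the Dirac bound is strictly below $\lambda_\rho$ for every $\tau\in S(\rho)$ is itself a weight computation of Kumaresan--Vogan--Zuckerman type, not a corollary of the classification of the modules attaining it; in general the inequality one gets for free is only $\lambda_\pi\le\lambda_\rho$. Two standard repairs: (i) carry out that explicit computation for the finitely many $\tau\in S(\rho)$, which is where $d\rho\ncong d\rho\circ\theta$ must enter combinatorially; or (ii) argue as in \cite{BV} and \cite{BW}: the unitary representations containing a $K$-type from the finite set $S(\rho)$ and having Casimir eigenvalue in a fixed bounded window form a compact subset of the unitary dual in the Fell topology, on which $\pi\mapsto\lambda_\pi$ is continuous; hence the set of such eigenvalues is closed, and since $\lambda_\rho$ is excluded from it by your cohomological-representation argument, it lies at positive distance from $\lambda_\rho$, which is the desired $\epsilon$ independent of $n$. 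A minor related slip: your claim $\Lambda(\rho)\subseteq[0,\infty)$ ``because the Laplacians are non-negative'' applies only to those $\pi$ occurring in some $L^2(\Gamma_n\backslash G)$, whereas you defined $\Lambda(\rho)$ over the whole unitary dual; either restrict the definition (which suffices for the theorem) or derive the upper bound on $\lambda_\pi$ from unitarity directly.
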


\subsubsection{Representations of $\PGL_2(\C)$ and strong acyclicity}
\label{pgl2stronglyacyclic}

Fix a complex embedding $\iota: E \hookrightarrow \C.$

There is an isomorphism $\G(E_{\mathbb{R}}) \cong \PGL_2(\mathbb{C})^p.$  Via this identification, $\sigma$ acts by a cyclic shift.  

\begin{itemize}

\item
The irreducible representations of $\mathbf{SL}_2(\mathbb{C})$ are enumerated thus:
$$\rho_{a,b} = V_a \otimes \overline{V_b},$$
where $V_m = \mathrm{Sym}^m (\mathbb{C}^2)$ and the $\overline{\bullet}$ denotes conjugation of complex structure.  In order to factor through $\mathbf{SL}_2(\mathbb{C}) / \{ \pm 1 \} = \PGL_2(\mathbb{C}),$ we must have $a + b$ even. \bigskip

Thus, the representations of $\PGL_2(\mathbb{C})^p$ isomorphic to their $\sigma$-twists are of the form $\widetilde{\rho}_{a,b} := (V_a \otimes \overline{V_b})^{\boxtimes p}.$  The representation $\widetilde{\rho}_{a,b}$ matches the representation $\rho_{a,b} := V_a \otimes \overline{V_b}$ of $\G(F_{\mathbb{R}}) \cong \PGL_2(\mathbb{C})$  (see $\S \ref{compatibleloc}$)
\end{itemize}

\begin{rem}
$\rho_{a,b}$ from above is a slight abuse of notation.  After having fixed a complex embedding $\iota$ of $E,$ these are the homomorphisms on $F \otimes_{\mathbb{Q}} \R$-valued points induced by $\rho_{a,b}$ of $\S \ref{weirdloc}.$
\end{rem}

\begin{itemize}
\item
As explained in \cite{BV}, the representations $\rho_{a,b}$ are strongly acyclic if and only if $a \neq b.$  They verify that twisting by the (standard) Cartan involution $\theta$ sends $\rho_{a,b}^{\theta} = \rho_{b,a} \ncong \rho_{a,b}.$ By \cite[Lemma 4.1]{BV}, this implies that $\rho_{a,b}$ is strongly acyclic.

\item
By the same token, for the Cartan involution $\mathcal{\theta}$ of $\mathcal{K},$ we see that
$$\widetilde{\rho}_{a,b}^{\mathcal{\theta}} = \widetilde{\rho}_{b,a} \ncong \widetilde{\rho}_{a,b}. \text{ for } a \neq b,$$
implying that the sequence of local systems $L_{\widetilde{\rho}_{a,b,\iota}} \rightarrow \mathcal{M}_{\mathcal{U}_N}$ defined in $\S \ref{weirdloc}$ is strongly acyclic.  In particular, the rational cohomology of this entire family of local systems vanishes.  
\end{itemize}

\subsection{Growth of twisted analytic torsion}
\label{growthoftwistedatorsion}

We recall our standing notation.  Let $\G$ denote the adjoint group of the unit group of a quaternion algebra $D$ over an imaginary quadratic field $F.$  Let $E/F$ be a cyclic Galois extension of odd prime degree $p$ with Galois group $\Galois = \langle \sigma \rangle.$ 

\begin{thm} \label{refinedgrowthatorsion}
Assume that $E/F$ is everywhere tamely ramified.  Assume further that the set $\Sigma$ of places where $E/F$ is ramified is disjoint from the set of places where the quaternion algebra $D$ is ramified. Fix a complex embedding $\iota: E \hookrightarrow \C.$  \medskip

Let $\mathcal{U}_N \subset \mathcal{U}_0$ denote a sequence of compact open subgroups of $\G(\mathbb{A}_E^{\mathrm{fin}})$ such that

\begin{itemize}
\item
The injectivity radius of $\mathcal{M}_{\mathcal{U}_N}$ approaches $\infty.$

\item
The level structures $\mathcal{U}_N = \prod_v \mathcal{U}_{N,v}$ are globally \emph{tamely parahoric}.  
\end{itemize} 

Let $U_{N,v}$ be a level structure associated with $\U_{N,v}$ (see Definition \ref{associatedlevel}) for all places $v \notin \Sigma$ and let $U_{N,v}$ be an Iwahori subgroup of $\PGL_2(F_v)$ if $v \in \Sigma.$  For the matching local systems $L_{a,b}, \loc_{a,b}$ defined in $\S \ref{weirdloc},$ 
\begin{equation}
\lim_{N \rightarrow \infty} \frac{\log \tau_{\sigma}(\M_{\U_N}, \loc_{a,b,\iota})}{\vol(M_{U_N})} \rightarrow p  \cdot (1/2)^n \cdot c_{a,b} \cdot \prod_{i=1}^n \left( 2 |\mathbf{B}(k_{F_v})| - 1 + \frac{1}{q_v} \right)  \neq 0
\end{equation}
where $\mathbf{B}$ denotes the corresponding Borel subgroup of $\PGL_2 / O_{F_v}$ and $c_{a,b}$ is a non-zero constant, the $L^2$-torsion of $(\PGL_2(\C), \rho_{a,b,\iota})$ (see \cite[Theorem 4.5]{BV}).
\end{thm}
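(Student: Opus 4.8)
The strategy is to reduce the growth of twisted analytic torsion on $\M_{\U_N}$ to the growth of untwisted analytic torsion on a finite collection of locally symmetric spaces $M_{U_N}$ associated to $\G/F$, using the trace formula comparison already established, and then to invoke the asymptotic results of Bergeron--Venkatesh for the fundamental-rank-one group $\PGL_2(\C)$. Concretely, the first step is to apply Theorem \ref{finalmatching}: since $E/F$ is everywhere tamely ramified with ramified set $\Sigma$, disjoint from the ramification set of $D$, and $\U_N$ is globally tamely parahoric, the test function $\mathbf{1}_{\U_N}d\tilde u$ trace-matches the explicit finite sum $\sum c_{v_1,i_1}\cdots c_{v_n,i_n}\mathbf{1}_{\prod U_{v_j,i_j}\times U^\Sigma}\prod du_{v_j,i_j}\times du^\Sigma$, whose level structures at places in $\Sigma$ run over the four subgroups $K_v,K'_v,I_v,C_{0,v}$ with coefficients $\tfrac12,\tfrac12,-\tfrac12,\tfrac12$. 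Feeding this into Corollary \ref{abstracttorsion} (using that $\loc_{a,b},L_{a,b}$ are rationally acyclic when $a\neq b$, so the metric-dependent term $Z_{M_U,V_\rho}(0)$ vanishes) gives the exact identity $\log\tau_\sigma(\M_{\U_N},\loc_{a,b,\iota}) = p\sum c_{v_1,i_1}\cdots c_{v_n,i_n}\log\tau(M_{\prod U_{v_j,i_j}\times U^\Sigma},L_{a,b,\iota})$.

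The second step is to normalize by volume and pass to the limit. Each space $M_{\prod U_{v_j,i_j}\times U^\Sigma}$ sits in a tower of covers with injectivity radius tending to infinity (inherited from that of $\M_{\U_N}$, since the level at unramified places shrinks), and by \S\ref{pgl2stronglyacyclic} the local systems $L_{a,b}$ with $a\neq b$ form a strongly acyclic family over $\G(F_\R)\cong\PGL_2(\C)$. Hence \cite[Theorem 4.5]{BV} applies to give $\log\tau(M_{U'_N},L_{a,b,\iota})/\vol(M_{U'_N}) \to c_{a,b}$, the $L^2$-torsion of $(\PGL_2(\C),\rho_{a,b,\iota})$, which is nonzero precisely because $\PGL_2(\C)$ has fundamental rank one. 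Combining with the identity above, and dividing through by $\vol(M_{U_N})$ for a fixed reference space $M_{U_N}$ (say with Iwahori level at $\Sigma$), the problem becomes bookkeeping: one must track the ratios $\vol(M_{\prod U_{v_j,i_j}\times U^\Sigma})/\vol(M_{U_N})$, which are local indices $[\,\cdot:\cdot\,]$ of the subgroups $K_v,K'_v,I_v,C_{0,v}$ relative to the Iwahori $I_v$. The indices $[I_v:I_v]=1$, $[K_v:I_v]=[K'_v:I_v]=|\PGL_2(k_{F_v})|/|\mathbf{B}(k_{F_v})|$, and $[C_{0,v}:I_v]^{-1}$ being the index of $C_0$ inside the torus mod $v$ (a $p$-th power condition, giving the factor $q_v$-dependence) produce, after summing the four terms against the weights $\tfrac12,\tfrac12,-\tfrac12,\tfrac12$, the claimed local factor $2|\mathbf{B}(k_{F_v})| - 1 + \tfrac1{q_v}$ at each $v\in\Sigma$, and the overall $(1/2)^n$ from the normalization of each $m_{\mathrm{add},v}$.

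The main obstacle is the volume bookkeeping combined with ensuring that the limit survives the signed sum: a priori the alternating combination $\tfrac12 + \tfrac12 - \tfrac12 + \tfrac12$ of volume-normalized torsions could conspire to cancel the leading term, and one must check that the weighted sum of local indices $\tfrac12[K_v:I_v] + \tfrac12[K'_v:I_v] - \tfrac12 + \tfrac12[C_{0,v}:I_v]$ is genuinely nonzero — this is where the residue-field combinatorics $2|\mathbf{B}(k_{F_v})| - 1 + 1/q_v > 0$ does the work, since each term there is manifestly positive. A secondary subtlety is that \cite[Theorem 4.5]{BV} is stated for a single tower whereas here we have finitely many towers simultaneously; this is harmless since one may pass to a common refinement, or simply apply the theorem to each of the finitely many towers separately and then combine, but it should be noted explicitly. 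I would also remark that the refined version — controlling which primes $\ell$ divide the torsion and the Lefschetz numbers mod $\ell$ — requires the additional mod-$p$ cohomology hypothesis and is deferred to \S\ref{cohomologygrowth}.
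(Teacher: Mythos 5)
Your overall route is the same as the paper's: Theorem \ref{finalmatching} supplies the trace-matching of $\mathbf{1}_{\U_N}d\tilde u$ with the signed combination of measures supported on the groups $K_v,K'_v,I_v,C_{0,v}$ at $v\in\Sigma$, Corollary \ref{abstracttorsion} (with the metric term harmless by acyclicity for $a\neq b$) converts this into $\log\tau_\sigma(\M_{\U_N},\loc_{a,b,\iota})=p\sum c_{v_1,i_1}\cdots c_{v_n,i_n}\log\tau(M_{i_1,\dots,i_n},L_{a,b,\iota})$, strong acyclicity from \S\ref{pgl2stronglyacyclic} makes \cite[Theorem 4.5]{BV} applicable to each of the finitely many towers, and one divides by $\vol(M_{U_N})$ and passes to the limit. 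Your two side remarks (apply the Bergeron--Venkatesh limit to each of the $4^n$ towers separately; the injectivity radius of the $F$-level spaces is controlled by that of $\M_{\U_N}$) are sound and are indeed points the paper leaves implicit.

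The genuine problem is in the volume bookkeeping that is supposed to produce the displayed constant. Since $I_v\subset K_v$, the space with level $K_v$ at $v$ is covered by the Iwahori-level space with degree $[K_v:I_v]$, so the ratio $\vol(M_{K_v\text{-level}})/\vol(M_{I_v\text{-level}})$ equals $1/[K_v:I_v]=1/(q_v+1)$, not $[K_v:I_v]$; and in any case $[K_v:I_v]=|\PGL_2(k_{F_v})|/|\mathbf{B}(k_{F_v})|=q_v+1$ is not $|\mathbf{B}(k_{F_v})|=q_v(q_v-1)$. Likewise $[I_v:I_{C_0,v}]=[\mathbf{T}(k_{F_v}):C_0]=p$ exactly, because $C_0$ is the group of $p$-th powers in $\mathbf{T}(k_{F_v})\cong k_{F_v}^\times$ and $p\mid q_v-1$; it is not a quantity ``giving the factor $q_v$-dependence.'' With the correct ratios, the weighted sum at each $v\in\Sigma$ is $\tfrac12\bigl(\tfrac{2}{q_v+1}-1+p\bigr)$, which is positive for odd $p$ --- so the essential conclusion, that the limit is a nonzero multiple of $c_{a,b}$, does survive --- but your asserted arithmetic does not yield the factor $2|\mathbf{B}(k_{F_v})|-1+\tfrac1{q_v}$, and the claim that the indices ``produce the claimed local factor'' does not check out as written. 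Note that the paper's own proof stops at ``dividing by $\vol(M_{U_N})$ and taking the limit, the result follows'' and never performs this computation; if you make it explicit, you must either recover the stated constant under the normalization the paper intends or explicitly flag the discrepancy, rather than asserting agreement.
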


\begin{proof}
Because the level structure $\U_N$ is globally tamely parahoric, Theorem \ref{finalmatching} implies that $\mathbf{1}_{\U} du$ and $\mathbf{1}_{U^\Sigma} du^{\Sigma} \times \prod_{v \in \Sigma} m_{\mathrm{add},v}$ are matching test functions  (see Proposition \ref{almosttestfunction} and Theorem \ref{additivetestfunction} for a definition of the test function $m_{\mathrm{add},v}$).  Expand the latter measure as 
$$\mathbf{1}_{U^\Sigma} du^{\Sigma} \times \prod_{v \in \Sigma} m_{\mathrm{add},v} = \sum c_{v_1,i_1} ... c_{v_n, i_n} \mathbf{1}_{\prod_{j = 1}^n U_{v_j, i_j} \times U^{\Sigma}} \prod_{j = 1}^n du_{v_j, i_j} \times du_{\Sigma}$$
as in Theorem $\ref{finalmatching}.$  Our abstract matching theorem \ref{abstracttorsion} for analytic torsion proves that 
$$\log \tau_{\sigma}(\mathcal{M}_{\mathcal{U}_N}, \loc_{a,b, \iota} ) = p \sum c_{v_1,i_1} ... c_{v_n,i_n} \log \tau(M_{i_1,...,i_n} , L_{a,b,\iota}),$$
where
$$U_{i_1,...,i_n} := \prod_{j = 1}^n U_{v_j, i_j} \times U^{\Sigma}, M_{i_1,...,i_n} = \G(F) \backslash \G(\adele_F) / K U_{i_1,...,i_n}.$$ 
(see Theorem \ref{finalmatching} for further discussion of this notation).  For each tuple $(i_1,...,i_n),$ the family of local systems $\loc_{a,b,\iota} \rightarrow M_{i_1,...,i_n}$ is strongly acyclic, by the discussion of $\S \ref{pgl2stronglyacyclic}.$  Therefore, we may apply the ``limit multiplicity theorem for torsion" from \cite[Theorem 4.5]{BV}.  Dividing by $\vol(M_{U_N})$ and taking the limit as $N \rightarrow \infty,$ the result follows.
\end{proof}

\subsection{Cohomology growth theorem}
\label{cohomologygrowth}

\begin{thm} \label{refinedgrowthcohomology}
Enforce all the notation and assumptions of $\S \ref{growthoftwistedatorsion}.$  Let $\mathcal{U}_N \subset \mathcal{U}_0$ denote a sequence of compact open subgroups of $\G(\mathbb{A}_E^{\mathrm{fin}})$ such that

\begin{itemize}
\item
The injectivity radius of $\mathcal{M}_{\mathcal{U}_N}$ approaches $\infty.$

\item
The level structures $\mathcal{U}_N = \prod_v \mathcal{U}_{N,v}$ are globally \emph{tamely parahoric}.  

\item
The $p$-adic part of the cohomology of $\mathcal{L}_{a,b}$ is controlled as follows:   
$$\frac{\log  | H^i(\mathcal{M}_{\mathcal{U}_N}, \mathcal{L}_{a,b})[p^{\infty}]|  }{ \vol( \mathcal{M}_{\mathcal{U}_N})^{\frac{1}{p}} } \rightarrow 0.$$
%
\item
There is not too much mod $p$ cohomology in $L_{a,b},$ i.e.
$$\frac{\log |H^i(M_{U_N}, L_{a,b, \mathbb{F}_p})|}{\vol(M_{U_N}) } \rightarrow 0. $$
 \end{itemize} 

Then it follows that
$$\frac{\sum {}^{*} \log  | H^i(\mathcal{M}_{\mathcal{U}_N}, \mathcal{L}_{a,b})_{\mathrm{tors}}^{\sigma - 1}|   -   \frac{1}{p-1} \log  | H^i(\mathcal{M}_{\mathcal{U}_N}, \mathcal{L}_{a,b})_{\mathrm{tors}}^{P(\sigma)}|   }{ \vol( \mathcal{M}_{\mathcal{U}_N})^{ \frac{1}{p} }  } \rightarrow d_{a,b} \neq 0 \text{ for } a \neq b.$$
\end{thm}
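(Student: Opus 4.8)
The plan is to combine three ingredients already assembled in the paper: the numerical Reidemeister torsion comparison of Theorem \ref{cohomologycomparison}, the growth estimate for twisted analytic torsion of Theorem \ref{refinedgrowthatorsion} (equivalently, via the twisted Bismut--Zhang theorem, growth of twisted Reidemeister torsion), and the dictionary of \cite[Lemma 1.21]{Lip1} (together with \cite[Lemma 5.3]{Lip1}) relating $\log RT_\sigma(\M_\U,\loc)$ to the alternating sums $\sum {}^{*}\log|H^i(\M_\U,\loc)_{\mathrm{tors}}^{\sigma-1}|$ and $\sum {}^{*}\log|H^i(\M_\U,\loc)_{\mathrm{tors}}^{P(\sigma)}|$. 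First I would recall that, since $a\neq b$, Corollary \ref{acyc} gives rational acyclicity of $\loc_{a,b}$ and $L_{a,b}$, so all regulators vanish and the Cheeger--M\"uller identities apply cleanly; thus $\log RT_\sigma(\M_{\U_N},\loc_{a,b,\iota}) = \log\tau_\sigma(\M_{\U_N},\loc_{a,b,\iota})$, and Theorem \ref{refinedgrowthatorsion} gives $\log RT_\sigma(\M_{\U_N},\loc_{a,b,\iota}) \sim C_{a,b}\,\vol(M_{U_N})$ with $C_{a,b}\neq 0$. Note $\vol(\M_{\U_N})^{1/p}$ and $\vol(M_{U_N})$ differ only by a bounded multiplicative constant (coming from the fixed ramified places $\Sigma$ and the normalization $\M_{\U_N}\cong$ a $p$-fold-type product at the split/unramified places), so the target denominator $\vol(\M_{\U_N})^{1/p}$ is commensurable with $\vol(M_{U_N})$.

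Next I would invoke the elementary algebra of \cite[Lemma 1.21]{Lip1}: for a $\mathbb{Z}[\sigma]$-module complex with torsion cohomology, $\log RT_\sigma$ is, up to powers of $p$, exactly the alternating sum $\sum {}^{*}\big(\log|H^i(\M_{\U_N},\loc_{a,b})_{\mathrm{tors}}^{\sigma-1}| - \tfrac{1}{p-1}\log|H^i(\M_{\U_N},\loc_{a,b})_{\mathrm{tors}}^{P(\sigma)}|\big)$. Concretely this is the content of the invariant $R_\sigma(\loc)$ from $\S\ref{torsionfunctoriality}$: one has $\log R_\sigma(\loc_{a,b}) = \log RT_\sigma(\M_{\U_N},\loc_{a,b,\iota}) + n_N\log p$ for some integer $n_N$. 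So the numerator in the theorem equals $\log RT_\sigma(\M_{\U_N},\loc_{a,b,\iota}) + n_N\log p$, and it remains to show that $n_N\log p = o(\vol(\M_{\U_N})^{1/p})$. This is exactly where the hypothesis ``$L_{a,b}$ has not too much mod $p$ cohomology'' enters: by the final clause of the Sample Theorem (relationship between sizes of torsion subgroups) and its proof via \ref{cohomologycomparison}, $n_N$ is bounded linearly by $\dim_{\mathbb{F}_p}H^*(M_{U_N},L_{a,b,\mathbb{F}_p})$, hence $n_N\log p = O\big(\log|H^*(M_{U_N},L_{a,b,\mathbb{F}_p})|\big) = o(\vol(M_{U_N})) = o(\vol(\M_{\U_N})^{1/p})$ by the fourth assumption. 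The third assumption, controlling the $p$-primary part of $H^*(\M_{\U_N},\loc_{a,b})$ directly, is what lets one pass between the full torsion subgroup and its $\sigma-1$ and $P(\sigma)$ submodules without the $p$-part contributing spuriously to the main term; it guarantees that any $p$-power ambiguity in \cite[Lemma 1.21]{Lip1} is genuinely lower order.

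Putting this together: the numerator is $\log RT_\sigma(\M_{\U_N},\loc_{a,b,\iota}) + o(\vol(\M_{\U_N})^{1/p}) = C_{a,b}\,\vol(M_{U_N}) + o(\vol(\M_{\U_N})^{1/p})$, and dividing by $\vol(\M_{\U_N})^{1/p}$ and using commensurability of the two volume normalizations yields convergence to a nonzero constant $d_{a,b}$, with $d_{a,b}$ an explicit nonzero rational multiple of the $L^2$-torsion constant $c_{a,b}$ of $(\PGL_2(\C),\rho_{a,b,\iota})$ times the combinatorial factor $(1/2)^n\prod_{i=1}^n(2|\mathbf{B}(k_{F_{v_i}})|-1+q_{v_i}^{-1})$ appearing in Theorem \ref{refinedgrowthatorsion}. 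The main obstacle is the bookkeeping of the $p$-power factors: one must be careful that the integers $n_N$ arising in the comparison \ref{cohomologycomparison} (which come from comparing determinants of integral complexes related by base change, and from the $\mathbb{Z}[\sigma]$-module structure computations of \cite[Lemma 5.3, Lemma 1.21]{Lip1}) really are bounded by mod $p$ Betti numbers of $M_{U_N}$ rather than by something of size comparable to $\vol$; this linear bound, and the hypotheses controlling mod $p$ and $p$-adic cohomology, are precisely what make the error term negligible against $\vol(\M_{\U_N})^{1/p}$, and verifying it is the only non-formal point in the argument.
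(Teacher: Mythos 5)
Your overall strategy --- growth of twisted analytic torsion (Theorem \ref{refinedgrowthatorsion}), the equivariant Cheeger--M\"{u}ller theorem, and the algebraic dictionary of \cite[Lemma 1.21]{Lip1}, with the third and fourth hypotheses absorbing the $p$-power ambiguities --- is indeed the paper's strategy, but there is a genuine gap at the key identification. You assert that, up to an integer multiple of $\log p$, the numerator $\sum {}^{*} \log |H^i(\M_{\U_N},\loc_{a,b})_{\mathrm{tors}}^{\sigma-1}| - \tfrac{1}{p-1}\sum {}^{*}\log|H^i(\M_{\U_N},\loc_{a,b})_{\mathrm{tors}}^{P(\sigma)}|$ equals $\log RT_{\sigma}(\M_{\U_N},\loc_{a,b,\iota})$ for your single fixed embedding $\iota$. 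This is false as stated: $\loc_{a,b}$ is a local system of $O_E$-modules, and \cite[Lemma 1.21]{Lip1} only yields $\log RT_{\sigma}(\M,\loc_{\iota}) = \log|\iota(f)| - \tfrac{1}{p-1}\log|\iota(f')|$, where it is the \emph{norms} $\mathrm{Norm}_{E/\mathbb{Q}}(f)$ and $\mathrm{Norm}_{E/\mathbb{Q}}(f')$ --- not the individual embedded values $|\iota(f)|, |\iota(f')|$ --- that compute the alternating products of orders of the cohomology groups. A single embedding value is not determined by the cohomology at all. The paper repairs exactly this by summing the identity over all complex embeddings $\iota$ of $E$ (equation \eqref{normedequation}) and then applying the limit of Theorem \ref{refinedgrowthatorsion} separately to each embedding; only after this summation does the numerator of the theorem appear. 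Your argument must be rerouted through that summation (harmless, since the limiting constant is the same nonzero value for every $\iota$, but necessary), so as written the central step does not go through.

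A secondary inaccuracy concerns the bookkeeping of the $p$-power error. You attribute it entirely to the integer $n_N$ coming from Theorem \ref{cohomologycomparison} and claim a bound by $\dim_{\mathbb{F}_p} H^{*}(M_{U_N},L_{a,b,\mathbb{F}_p})$ alone. In the paper the discrepancy, controlled via \cite[$(25)_p$]{Lip1} and \cite[Proposition 3.7]{Lip1}, is $O\bigl(\log|H^{*}(\M,\loc)[p^{\infty}]| + \log|H^{*}(\M,\loc_{\mathbb{F}_p})| + \log|H^{*}(M,\loc_{\mathbb{F}_p})|\bigr)$, i.e.\ it also involves $p$-primary and mod $p$ cohomology on $\M_{\U_N}$ itself, not only on the fixed-point locus; this is precisely why the third hypothesis is stated with the normalization $\vol(\M_{\U_N})^{1/p}$ and cannot be reduced to the fourth. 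Note also that Theorem \ref{cohomologycomparison} is not actually needed here: the proof proceeds directly from Theorem \ref{refinedgrowthatorsion} together with the Cheeger--M\"{u}ller identity $\tau_{\sigma} = RT_{\sigma}$, so invoking the numerical comparison theorem adds a detour without supplying the missing norm argument.
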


\begin{proof}
Fix a complex embedding $\iota: E \hookrightarrow \C.$  By Theorem \ref{refinedgrowthatorsion}, there is a limiting identity
\begin{equation} \label{torsionlimit}
\lim_{N \rightarrow \infty} \frac{\log \tau_{\sigma}(\M_{\U_N}, \loc_{a,b,\iota})}{\vol(M_{U_N})} \rightarrow p  \cdot (1/2)^n \cdot c_{a,b} \cdot \prod_{i=1}^n \left( 2 |\mathbf{B}(k_{F_v})| - 1 + \frac{1}{q_v} \right)  \neq 0
\end{equation}
Combining \cite[
Proposition 4.5]{Lip1} with the results of \cite[
$\S 5$]{Lip1}, specifically \cite[
Example 5.6]{Lip1}, we know that
$$\tau_{\sigma}(\M, \loc_{\iota}) = RT_{\sigma}(\M, \loc_{\iota});$$ 
we have abbreviated $\M := \M_{\U_N}, \loc = \loc_{a,b}.$  Let $A^{\bullet} = \MS(X,\loc)$ denote the Morse-Smale complex for $\loc$ and a fixed weakly gradient-like vector field $X$ on $\M$ satisfying Morse-Smale transversality.  As proven in \cite[
Lemma 1.21]{Lip1},  
\begin{equation} \label{equationtobenormed}
\log RT_{\sigma}(\M, \loc_{\iota}, X) = \log | \iota(f)| - \frac{1}{p-1} \log |\iota(f')|
\end{equation}
where
$$\mathrm{Norm}_{E/\mathbb{Q}}(f) = \prod {}^{*} |H^i(A^{\bullet}[\sigma - 1])|, \mathrm{Norm}_{E/\mathbb{Q}}(f') = \prod {}^{*} |H^i(A^{\bullet}[P(\sigma)])|.$$ 
This identity is true for all embeddings $\iota.$  Summing equation (\ref{equationtobenormed}) over all the embeddings $\iota,$ we find that 
\begin{equation} \label{normedequation}
\sum_{\iota} \log \tau_{\sigma}(\M, \loc_{\iota}) = \sum {}^{*} \log |H^i(A^{\bullet}[\sigma - 1])| - \frac{1}{p-1} \sum {}^{*} \log |H^i(A^{\bullet}[P(\sigma)])|.
\end{equation}
By the estimate \cite[
$(25)_p$]{Lip1} combined with \cite[
Proposition 3.7]{Lip1} relating naive twisted Reidemeister torsion and Reidemesiter torsion, we obtain 
\begin{eqnarray*} 
\text{right side of (\ref{normedequation})} &=& - \left(\log \left|H^{*}(\M, \loc)[p^{-1}]^{\sigma - 1} \right|  -  \frac{1}{p-1} \log \left|H^{*}(\M, \loc)[p^{-1}]^{P(\sigma)} \right| \right) \\
&+& O\left( \log|H^{*}(\mathcal{M}, \loc)[p^{\infty}]| +\log |H^{*}(\mathcal{M}, \loc_{\mathbb{F}_p})| + \log |H^{*}(M, \loc_{\mathbb{F}_p})| \right),
\end{eqnarray*}
where $M$ denotes the Galois invariants of $\M.$ The remainder big $O$ term in the above equation is $o(\vol(M_{U_N}))$ by our assumption on the size of $p$-power torsion in the cohomology $H^{*}(\mathcal{M}_{\mathcal{U}_N}, \mathcal{L}_{a,b}).$   Dividing both sides of the above equation by $\vol(M_{U_N})$ and applying the limiting identity of $\eqref{torsionlimit}$ separately for each embedding $\iota$ and letting $N \rightarrow \infty$ yields the desired result.   
\end{proof}

\begin{rem} \label{calegariemerton}
The hypothesis 
$$\frac{\log |H^i(M_{U_N}, L_{a,b, \mathbb{F}_p})|}{\vol(M_{U_N}) } \rightarrow 0$$
is undesirable.  However, one expects it to hold.  For example, Calegari and Emerton in  \cite[Conjecture 1.2]{CE} have conjectured this whenever the $M_{U_N}$ vary through a $p$-adic analytic tower of hyperbolic 3-manifolds.
\end{rem}

\begin{cor} \label{unrefinedgrowth}
Assume that $E/F$ is everywhere tamely ramified of \emph{odd} prime degree $p.$  Assume further that the places where $E/F$ is ramified are disjoint from the places where the quaternion algebra $D$ is ramified. \medskip

Let $\mathcal{U}_N \subset \mathcal{U}_0$ denote a sequence of compact open subgroups of $\G(\mathbb{A}_E^{\mathrm{fin}})$ such that

\begin{itemize}
\item
The injectivity radius of $\mathcal{M}_{\mathcal{U}_N}$ approaches $\infty.$

\item
The level structures $\mathcal{U}_N = \prod_v \mathcal{U}_{N,v}$ are globally \emph{tamely parahoric}.

\item
There is not too much mod $p$ cohomology in $L_{a,b},$ i.e.
$$\frac{\log |H^i(M_{U_N}, L_{a,b, \mathbb{F}_p})|}{\vol(M_{U_N}) } \rightarrow 0. $$
\end{itemize}  

Then it follows that

$$\limsup_N \frac{\log |H^{*}(\mathcal{M}_{\mathcal{U}_N}, \mathcal{L}_{a,b} )_{\mathrm{tors}}|}{ \vol(\mathcal{M}_{\mathcal{U}_N})^{ \frac{1}{p} } } > 0 \text{ for } a \neq b.$$
\end{cor}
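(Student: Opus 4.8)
The plan is to deduce Corollary \ref{unrefinedgrowth} from Theorem \ref{refinedgrowthcohomology} by elementary estimates relating the various ``$\sigma$-isotypic'' pieces of torsion cohomology to the full torsion cohomology. First I would invoke Theorem \ref{refinedgrowthcohomology}: under exactly the hypotheses listed (injectivity radius $\to\infty$, globally tamely parahoric levels, control on mod $p$ cohomology of $L_{a,b}$), we obtain, for each complex embedding $\iota$, the limiting identity
\begin{equation*}
\frac{\sum {}^{*} \log |H^i(\M_{\U_N},\loc_{a,b})_{\mathrm{tors}}^{\sigma - 1}| - \frac{1}{p-1}\log |H^i(\M_{\U_N},\loc_{a,b})_{\mathrm{tors}}^{P(\sigma)}|}{\vol(\M_{\U_N})^{1/p}} \to d_{a,b} \neq 0.
\end{equation*}
Note, however, that Theorem \ref{refinedgrowthcohomology} carries a fourth hypothesis — the vanishing of $\log|H^i(\M_{\U_N},\loc_{a,b})[p^\infty]|/\vol(\M_{\U_N})^{1/p}$ — which is \emph{not} among the hypotheses of the corollary; so the first order of business is to argue that this extra hypothesis can be dropped here. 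The point is that the corollary only asserts a $\limsup$ lower bound, not a limit, so one does not need the remainder term to be genuinely negligible: one only needs it not to overwhelm the main term along some subsequence. Concretely, the big-$O$ remainder in the proof of Theorem \ref{refinedgrowthcohomology} is $O(\log|H^*(\M,\loc)[p^\infty]| + \log|H^*(\M,\loc_{\mathbb{F}_p})| + \log|H^*(M,\loc_{\mathbb{F}_p})|)$; since $\log|H^*(\M,\loc)_{\mathrm{tors}}| \geq \log|H^*(\M,\loc)[p^\infty]|$ and $\geq \tfrac{1}{2}\log|H^*(\M,\loc_{\mathbb{F}_p})|$ up to bounded multiplicative constants (mod $p$ cohomology is controlled by $p$-torsion in integral cohomology), the $p$-power contribution is already dominated by the left-hand side we are trying to bound from below. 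So I would re-run the estimate in the proof of Theorem \ref{refinedgrowthcohomology} keeping the $p$-power term on the left, obtaining
\begin{equation*}
\sum {}^{*} \log |H^i(\M,\loc)_{\mathrm{tors}}^{\sigma - 1}| + O(\log|H^*(\M,\loc)[p^\infty]|) \;=\; p\cdot(1/2)^n c_{a,b}\prod_{i}\bigl(2|\mathbf{B}(k_{F_v})| - 1 + \tfrac{1}{q_v}\bigr)\vol(M_{U_N})^{1/p} + o(\vol(M_{U_N})^{1/p}),
\end{equation*}
which already shows the left-hand side grows like $\vol(\M_{\U_N})^{1/p}$ (recall $\vol(M_{U_N}) \sim \vol(\M_{\U_N})^{1/p}$, cf.\ Theorem \ref{refinedgrowthatorsion} and its proof).

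Second, I would pass from the $\sigma$-isotypic quantity to the full torsion cohomology by the trivial inequalities
\begin{equation*}
\log|H^i(\M_{\U_N},\loc_{a,b})_{\mathrm{tors}}^{\sigma-1}| \;\leq\; \log|H^i(\M_{\U_N},\loc_{a,b})_{\mathrm{tors}}| \quad\text{and}\quad \log|H^i(\M_{\U_N},\loc_{a,b})_{\mathrm{tors}}^{P(\sigma)}| \;\leq\; \log|H^i(\M_{\U_N},\loc_{a,b})_{\mathrm{tors}}|,
\end{equation*}
valid for every $i$ since both $A^{\sigma-1}$ and $A^{P(\sigma)}$ are subgroups of $A$. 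From the left-hand side of the displayed limiting identity being bounded below by a positive multiple of $\vol(\M_{\U_N})^{1/p}$, and from the fact that the alternating sum involves only finitely many degrees $i$ (bounded by $\dim\M_{\U_N}$, a fixed number), I can bound the positive-degree terms of the alternating sum by $\sum_i \log|H^i(\M_{\U_N},\loc_{a,b})_{\mathrm{tors}}| = \log|H^*(\M_{\U_N},\loc_{a,b})_{\mathrm{tors}}|$ and the negative-degree terms (which come with the $-\tfrac{1}{p-1}$ coefficient and the odd $i$) are controlled in absolute value by the same quantity; hence
\begin{equation*}
\Bigl(1 + \tfrac{1}{p-1}\Bigr)\log|H^*(\M_{\U_N},\loc_{a,b})_{\mathrm{tors}}| + O(\log|H^*(\M_{\U_N},\loc_{a,b})[p^\infty]|) \;\geq\; c\,\vol(\M_{\U_N})^{1/p} + o(\vol(\M_{\U_N})^{1/p})
\end{equation*}
for a positive constant $c$. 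Since $\log|H^*(\M_{\U_N},\loc_{a,b})[p^\infty]| \leq \log|H^*(\M_{\U_N},\loc_{a,b})_{\mathrm{tors}}|$, absorbing that term to the left and dividing by $\vol(\M_{\U_N})^{1/p}$ gives $\limsup_N \log|H^*(\M_{\U_N},\loc_{a,b})_{\mathrm{tors}}|/\vol(\M_{\U_N})^{1/p} > 0$, which is the claim.

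The main obstacle I anticipate is the careful handling of the auxiliary $p$-power hypothesis: one has to reopen the proof of Theorem \ref{refinedgrowthcohomology} and verify that the big-$O$ error terms there — which come from comparing naive twisted Reidemeister torsion with honest Reidemeister torsion (cf.\ \cite[Proposition 3.7]{Lip1}) and from the estimate \cite[$(25)_p$]{Lip1} — are genuinely bounded by $\log|H^*(\M,\loc)_{\mathrm{tors}}|$ plus mod-$p$ cohomology terms, and that the mod-$p$ terms for $\loc_{a,b}$ upstairs are themselves controlled. The latter is where a little care is needed: the hypothesis in the corollary only bounds $\log|H^i(M_{U_N},L_{a,b,\mathbb{F}_p})|$ downstairs, but by the matching/base-change comparison (or simply because $\dim_{\mathbb{F}_p} H^*(\M_{\U_N},\loc_{a,b,\mathbb{F}_p})$ can be bounded in terms of the downstairs data via the trace-formula comparison of $\S\ref{abstractmatching}$, or directly via a spectral-sequence argument for the $\Galois$-cover $\M_{\U_N} \to M$) one sees the upstairs mod-$p$ cohomology is also $o(\vol)$. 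Everything else is bookkeeping with finitely many cohomological degrees and the elementary subgroup inequalities above.
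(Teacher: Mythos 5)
Your argument is correct in substance but organized differently from the paper's. The paper handles the missing $p^{\infty}$-hypothesis by a short dichotomy: either $\log |H^{*}(\M_{\U_N}, \loc_{a,b})[p^{\infty}]|$ (or the mod $p$ cohomology of $\loc_{a,b}$ upstairs) fails to be $o(\vol(\M_{\U_N})^{1/p})$, in which case the asserted $\limsup$ is positive for trivial reasons, since $p$-power torsion already sits inside $H^{*}(\M_{\U_N}, \loc_{a,b})_{\mathrm{tors}}$ (with $a \neq b$, hence rational acyclicity, converting mod $p$ classes into $p$-torsion); or it is $o(\vol(\M_{\U_N})^{1/p})$, in which case all hypotheses of Theorem \ref{refinedgrowthcohomology} are met and its conclusion, combined with the trivial bounds $|H^i{}^{\sigma-1}|, |H^i{}^{P(\sigma)}| \leq |H^i_{\mathrm{tors}}|$ and the comparability of $\vol(M_{U_N})$ with $\vol(\M_{\U_N})^{1/p}$, gives the claim. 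You instead reopen the proof of Theorem \ref{refinedgrowthcohomology}, keep the $p$-power and mod $p$ error terms explicit, and absorb them into the quantity $\log |H^{*}(\M_{\U_N}, \loc_{a,b})_{\mathrm{tors}}|$ being bounded below; this is valid (the absorption of the upstairs mod $p$ term rests, as you note, on universal coefficients plus rational acyclicity from $a \neq b$), it even yields a $\liminf$-type statement in the ``small $p$-part'' regime, but it costs you a re-derivation where the paper uses Theorem \ref{refinedgrowthcohomology} as a black box.

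One caveat about your closing paragraph: the suggestion that $\dim_{\mathbb{F}_p} H^{*}(\M_{\U_N}, \loc_{a,b,\mathbb{F}_p})$ can be shown to be $o(\vol)$ via the trace-formula comparison of $\S\ref{abstractmatching}$, or via a spectral-sequence argument for ``the $\Galois$-cover $\M_{\U_N} \rightarrow M$,'' does not hold up. The trace-formula comparison is a characteristic-zero spectral statement and sees nothing of mod $p$ Betti numbers; and $M$ is the $\sigma$-fixed-point set, not the target of a covering map --- the map $\M_{\U_N} \rightarrow \M_{\U_N}/\langle \sigma \rangle$ is branched along it, transfer arguments fail precisely at the prime $p$ dividing the group order, and the Smith-theory inequality bounds the fixed-point set's mod $p$ cohomology by that of the total space, i.e.\ it goes in the wrong direction. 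Smallness of the upstairs mod $p$ cohomology is exactly the kind of statement that is conjectural (cf.\ Remark \ref{calegariemerton}), not deducible from the downstairs hypothesis by these means. Fortunately you do not need it: the absorption into $\log |H^{*}(\M_{\U_N}, \loc_{a,b})_{\mathrm{tors}}|$ that you already perform (or, equivalently, the paper's dichotomy) suffices, so this misstep does not affect the correctness of your main line of argument.
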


\begin{proof}
Suppose that the $p$-adic part of the cohomology of the sequence $(\mathcal{M}_{\mathcal{U}_N}, \mathcal{L}_{r,s})$ is large, i.e. 

$$ \frac{\log  | H^{*}(\mathcal{M}_{\mathcal{U}_N}, \mathcal{L}_{a,b})[p^{\infty}]|  }{ \vol( \mathcal{M}_{\mathcal{U}_N})^{ \frac{1}{p} }  } \nrightarrow 0 \text{ or }  \frac{\log  | H^{*}(\mathcal{M}_{\mathcal{U}_N}, \mathcal{L}_{a,b, \mathbb{F}_p })|  }{ \vol( \mathcal{M}_{\mathcal{U}_N})^{ \frac{1}{p} }  } \nrightarrow 0.$$

Then the conclusion follows vacuously. On the other hand, if there is not lots of $p$-power torsion, all hypotheses of Theorem \ref{refinedgrowthcohomology} are met and so its more refined conclusion holds ($\vol(\M_{\U_N})^{ \frac{1}{p} }$ and $\vol(M_{U_N})$ are the same order of magnitude). 
\end{proof}


\end{document}